\let\mathcal\mathscr
\def\Z{{\bf Z}}
\def\av{abelian variety}
\def\lra{\longrightarrow}
\def\lraa{\lra\hskip-6.05mm \lra}
\def\llra{\hbox to 10mm{\rightarrowfill}}
\def\llraa{\llra\hskip-9.2mm \llra}
\def\lllra{\hbox to 15mm{\rightarrowfill}}
\def\lllraa{\lllra\hskip-14.2mm \lllra}
\def\PA{{\widehat A}}
\def\PB{{\widehat B}}
\def\PK{{\widehat K}}
\def\PV{{\widehat V}}
\def\PX{{\widehat X}}
\def\PE{{\widehat E}}
\def\PT{{\widehat T}}
\def\PF{{\widehat F}}
\def\Pf{{\widehat f}}
\def\phi{{\varphi}}
\def\cF{\mathcal{F}}
\def\cO{\mathcal{O}}
\def\cG{\mathcal{G}}
\def\cH{\mathcal{H}}
\def\cE{\mathcal{E}}
\def\cV{\mathcal{V}}
\def\dra{\dashrightarrow}
\def\tto{\twoheadrightarrow}
\def\isom{\simeq}
\DeclareMathOperator{\isomdra}{\stackrel{{}_{\scriptstyle\sim}}{\dra}}
\def\eps{\varepsilon}
\def\ie{\hbox{i.e.,}}
\def\eg{\hbox{e.g.,}}
\def\gge{globally generated}
\def\vide{\varnothing}
\DeclareMathOperator{\Ker}{Ker}
\DeclareMathOperator{\codim}{codim}
\DeclareMathOperator{\Pic}{Pic}
\DeclareMathOperator{\Irr}{Irr}
\DeclareMathOperator{\Spec}{Spec}
\DeclareMathOperator{\id}{id}
\DeclareMathOperator{\Alb}{Alb}
\DeclareMathOperator{\Card}{Card}
\DeclareMathOperator{\Gal}{Gal}
\DeclareMathOperator{\Td}{Td}
\DeclareMathOperator{\ch}{ch}
\def\subset{\subseteq}
\def\siml{\sim_{\rm lin}}
\newtheorem{lemm}{Lemma}[section]
\newtheorem{theo}[lemm]{Theorem}
\newtheorem{coro}[lemm]{Corollary}
\newtheorem{prop}[lemm]{Proposition}
\newtheorem*{conj*}{Conjecture}
\theoremstyle{definition}
\newtheorem{rema}[lemm]{Remark}
\newtheorem{remas}[lemm]{Remarks}
\newtheorem{exam}[lemm]{Example}
\newtheorem{qu}[lemm]{Question}
\theoremstyle{remark}
\newtheorem*{remark*}{Remark}
\newtheorem*{note*}{Note}
\def\moins{\mathop{\hbox{\vrule height 3pt depth -2pt
width 5pt}\,}}
\begin{document}
\title[Varieties with vanishing Euler characteristic]{Varieties with vanishing holomorphic Euler characteristic}
\author[J. Chen]{Jungkai Alfred Chen}
\address{Taita Institute for Mathematical Sciences\\National Center for Theoretical Sciences, Taipei Office\\
 and Department of Mathematics\\1 Sec. 4, Roosevelt Rd. Taipei 106, Taiwan}
\email{{\tt jkchen@math.ntu.edu.tw}}
\author[O. Debarre]{Olivier Debarre}
\thanks{O. Debarre is part of the  project VSHMOD-2009   ANR-09-
BLAN-0104-01.}
\address{D\'epartement Math\'ematiques et Applications\\UMR CNRS 8553\\\'Ecole Normale Su\-p\'e\-rieu\-re\\45 rue d'Ulm, 75230 Paris cedex 05, France}
\email{{\tt olivier.debarre@ens.fr}}
\author[Z. Jiang]{Zhi Jiang}
\address{Max-Planck-Institut f\"ur Mathematik\\Vivatsgasse 7, 53111 Bonn, Germany}
\email{{\tt flipz@mpim-bonn.mpg.de}}

 \begin{abstract} We study smooth complex projective varieties $X$ of maximal Albanese dimension and of general type  satisfying  $\chi(X, \cO_X)=0$. We prove that the Albanese variety of $X$ has at least three simple factors. Examples were constructed by Ein and Lazarsfeld, and we prove that in dimension 3, these examples are (up to abelian \'etale covers) the only ones. By results of Ueno, another source of examples is provided by varieties $X$   of maximal Albanese dimension and of general type  satisfying  $h^0(X, K_X)=1$. Examples were constructed by Chen and Hacon, and again, we prove that in dimension 3, these examples are (up to abelian \'etale covers) the only ones. We also formulate a conjecture on the general structure of these varieties in all dimensions.
 \end{abstract}
  \subjclass[2010]{14J10, 14J30, 14F17, 14E05.}  
\keywords{Vanishing theorems, generic vanishing, cohomological loci, varieties of general type, Albanese dimension, Albanese variety, Euler characteristic, isotrivial fibrations.}

  \maketitle

\section{Introduction}

A smooth complex projective variety $X$ is said to have {\em maximal Albanese dimension} if  its Albanese mapping $X\to \Alb(X)$  is generically  finite (onto its image).

 Green and Lazarsfeld showed  in \cite{gl1} that such a variety satisfies $\chi(X, \omega_X)\ge 0$. Ein and Lazarsfeld later constructed in \cite{el}
 a smooth projective threefold $X$ of maximal Albanese dimension and of general type with
 $\chi(X, \omega_X)= 0$  (see Examples \ref{eel} and \ref{chh}).

We are interested here in describing the structure of  varieties $X$ of maximal Albanese dimension (and of general type) with $\chi(X, \omega_X)=0$.
 This class of varieties is stable by modifications, \'etale covers, and products with any other  variety of maximal Albanese dimension (and of general type). More generally,   if $X$ is a smooth projective variety   of maximal Albanese dimension with a fibration whose general fiber $F$ satisfies  $\chi(F, \omega_F)= 0$, then  $\chi(X, \omega_X)= 0$ (\cite{hp},  Proposition 2.5).

 So we study  smooth  projective varieties $X$ of general type with    $\chi(X,\omega_X)=0 $ and a generically finite morphism $X\to A$ to an \av. In \S\ref{strthe}, we prove a general structure theorem (Theorem \ref{q1}) which implies among other things that {\em $A$ has at least three simple  factors.} Examples where $A$ is the product of any three given non-zero factors can be constructed following Ein and Lazarsfeld, and we speculate that their construction  should (more or less) describe all cases where $A$ has  three simple  factors but, although we prove several results in \S\ref{3sc} in this direction (Propositions \ref{l35}, \ref{surj}, and \ref{l46}) and arrive at the rather rigid picture (\ref{d4}), we are only able to get a complete description   when $X$ has dimension 3: we prove
 that {\em a  smooth projective   threefold $X$ of maximal Albanese dimension and  of general type satisfies $\chi(X,\omega_X)=0$ if and only if it has an abelian \'etale cover which is an Ein-Lazarsfeld threefold}  (Theorem \ref{3cur}).

Another source of examples is provided by varieties $X$ of maximal Albanese dimension and $h^0(X,\omega_X)=1$: it follows from work of Ueno (\cite{ueno}) that they satisfy $\chi(X,\omega_X)=0$. Chen and Hacon constructed  examples  of general type  (see Example  \ref{chh}).
 We gather some properties of these varieties in \S\ref{p11}. However, this class of examples is not stable under \'etale covers and does not lend itself well to our methods of study, {\em except in dimension 3,} where the precise Theorem \ref{3cur} allows us to give {\em a complete description of all     smooth projective threefolds $X$  of maximal Albanese dimension and of general type, such that  $P_1(X)=1$: they are all  modifications of abelian \'etale covers of Chen-Hacon threefolds} (Theorem \ref{th63}).

In \S\ref{s7}, we propose a conjecture on the possible general structure of smooth projective  varieties $X$ of maximal Albanese dimension and of general type satisfying $\chi(X, \omega_X)=0$.
  It seems difficult to give a complete classification, but based on the examples that we know, we conjecture that, after taking modifications and \'etale covers, there should exists  a non-trivial  fibration $X\tto Y$ which is either isotrivial, or whose general fiber $F$ satisfies $\chi(F, \omega_F)= 0$. For the converse, one does have $\chi(X, \omega_X)=0$ in the second case by \cite{hp},  Proposition 2.5, but not necessarily in the first case, of course. Both cases do happen (Example \ref{eell}).

We work over the field of complex numbers.

 \medskip\noindent{\bf Acknowledgements.} The first-named author is partially supported by NCTS and the National
Science Council of Taiwan. This work started during the second-named author's visit to Taipei under the support of the
bilateral Franco-Taiwanese Project Orchid and continued during the first-named
author's visit to Institut Henri Poincar\'e in Paris and the third-named author's stay at the   Max Planck Institut for Mathematics in Bonn. The  authors are grateful for the support they received on these occasions.

\section{Notation and preliminaries}\label{notpre}

For any smooth    projective variety $X$, we set $\PX=\Pic^0(X)$. For $\xi\in\PX$, we will denote by $P_\xi$ an algebraically trivial line bundle on $X$ that represents $\xi$.

Following standard terminology, we will say that a
morphism
$f:X\to A$ to an \av\ $A$
  is {\em minimal} if the induced group morphism $\Pf:\PA\to \PX$  is injective. Equivalently,  $f(X)$ generates $A$ as an algebraic group and $f$ factors through no non-trivial abelian \'etale covers of $A$. The Albanese mapping $a_X$ has this property. Any   $f:X\to A$ factors as $f:X\xrightarrow{f'}A'\to A$, where $A'$ is an \av\ and $f'$ is minimal.

An {\em algebraic fibration} (or simply a fibration) is a surjective morphism between normal projective varieties, with connected fibers.

In the rest of this section, $X$ will be   a  smooth    projective variety, of dimension $n$, with a {\em generically finite}
morphism
$f:X\to A$ to an \av\ $A$.   In particular, $X$ has maximal Albanese dimension.

\subsection{Cohomological loci}\label{21}
For each integer $i$, we define the cohomological loci
\begin{eqnarray*}
V_i(\omega_X,f)&=&\{\xi\in\PA\mid H^i(X,\omega_X\otimes f^*P_\xi)\ne0\}\\
&=&\{\xi\in\PA\mid H^{n-i}(X, f^*P_{-\xi})\ne0\}.
\end{eqnarray*}
If   $Y$ is a smooth   projective variety and   $\eps:Y\tto X$ is birational, we have $R^j\eps_*\omega_Y=0$ for $j>0$ and $\eps_*\omega_Y\isom \omega_X$ (\cite{ko1}, Theorem 2.1), hence  $ \chi(X,\omega_X) = \chi(Y,\omega_Y)$ and $V_i(\omega_X,f)=V_i(\omega_Y,f\circ\eps)$ for all $i$. In particular,  these loci do not change when $X$ is replaced with $Y$.

\subsubsection{}\label{211} Since
$R^jf_*\omega_X=0$ for $j>0$, we have  for all $i$
$$ V_i(\omega_X,f )= V_i(f_*\omega_X ):=\{\xi\in\PA\mid H^i(A,f_*\omega_X\otimes P_\xi)\ne0\}.$$

\subsubsection{}\label{212} Each irreducible component of
$V_i(\omega_X ,f)$
is    an
  abelian subvariety of $\PA$ of codimension  $\ge i $ (\cite{el}, Remark 1.6 and Theorem 1.2) translated   by a torsion point (\cite{sim}).
  
\subsubsection{}\label{213} There is a chain of inclusions (\cite{el}, Lemma 1.8)
\begin{equation*}\label{inc}
\Ker(\Pf)=V_n(\omega_X,f)\subset V_{n-1}(\omega_X,f)\subset \cdots\subset V_0(\omega_X,f)\subset \PA,
\end{equation*}
and $\codim \bigl(V_n(\omega_X,f) \bigr)\ge n$.

\subsubsection{}\label{214} If $V_0(\omega_X,f)$ has a component of codimension $i$, this component is contained in (hence is   an irreducible component of) $V_i(\omega_X,f)$ (\cite{el}, (1.10)), so that we have $i\le n$ and  $f(X)$ is fibered by $i$-dimensional abelian subvarieties of $A$ (\cite{el}, Theorem 3).

\subsubsection{}\label{215}  For $\xi\in \PA$ general,
$\chi(X,\omega_X)=  h^0(X,\omega_X\otimes f^*P_\xi)\ge 0$ (use \ref{212}) and
 \begin{eqnarray*}
\qquad\chi(X,\omega_X)=0 & \Longleftrightarrow&V_0(\omega_X,f)\ne \PA\\
 & \Longleftrightarrow&\hbox{$V_0(\omega_X,f)$ has a component of codimension $i$}\\
 &&\hskip 3cm\hbox{for some $i\in\{1,\dots,n-1\}$}\\
 & \Longrightarrow&\hbox{$V_i(\omega_X,f)$ has a component of codimension $i$}\\
  &&\hskip 3cm\hbox{for some $i\in\{1,\dots,n-1\}$},
  \end{eqnarray*}
where the last implication  is not always an equivalence.

 \subsubsection{}\label{221}The variety $X$ is of general type if and only if $V_0(\omega_X,f)$ generates $\PA$ (\cite{CH}, Theorem 2.3).
  \subsubsection{}\label{222} If $V_0(\omega_X,f)$ is finite, Ein and Lazarsfeld proved that $X$ is birational to an \av\ (\cite{CH}, Theorem 1.3.2). In particular, if $f$ is moreover minimal, it is a birational isomorphism.

\subsection{Composing $f$ with a generically finite morphism}\label{23}
If   $Y$ is   smooth   projective   and $h:Y\tto X$ is  surjective and generically finite,
   the  trace map $h_*\omega_Y\tto\omega_X$  splits the natural inclusion
$\omega_X\to h_*\omega_Y $, hence $H^i(X,\omega_X\otimes f^*P_\xi )
$ injects into $ H^i(X, h_*\omega_Y\otimes f^*P_\xi)$ for all
 $\xi\in\PA$. Thus, using also   \ref{211}, we obtain
$$V_i(\omega_X,f)\subset V_i(h_*\omega_Y,f)=V_i(\omega_Y,f\circ h ). $$
Morover,
\begin{equation}\label{chi}
\chi(Y,\omega_Y)\ge  \chi(X,\omega_X) .
 \end{equation}
 When $h$ is \'etale, we have
 $$
  \chi(Y,\omega_Y) =\deg(h) \chi(X,\omega_X).
 $$
 Finally, when $h$ is obtained from an isogeny $\eta:B\tto A$ as in the   cartesian diagram
  $$
\xymatrix
{Y=X\times_AB\ar@{->>}[d]_h\ar[r]^-{g}\ar@{}[dr]|{\square}&B\ar@{->>}[d]^\eta\\
X\ar[r]_f&A
}
$$
we have  $V_i(\omega_Y,g)=\widehat\eta(V_i(\omega_X,f))$. Combining this with \ref{212}, we see that after making a suitable \'etale base change, we can always make all the components of   $V_i(\omega_X,f)$ pass through 0.

\section{Components of $V_i$ of codimension $i$}\label{strthe}

Let $X$ be a smooth  projective variety with a generically finite morphism $f:X\to A$ to an \av.
If  $\chi(X,\omega_X)=0 $, it follows from \ref{215} that  $V_i(\omega_X,f) $ has a component  of codimension $i$ for some $i\in\{1,\dots,n-1\}$. We prove a structure theorem under this weaker assumption.

\begin{theo}\label{q1}
 Let $X$ be a smooth  projective variety of dimension $n$, let $A$ be an \av, and let $f:X\to A$ be a minimal generically finite morphism. Assume that for some $i\in\{0,\dots,n\}$, the locus $V_i(\omega_X,f) $ has a component $V $ of codimension $i$ in $\PA$. Let $B$ be the abelian variety $ \PV  $,   let $K:=\Ker (A\tto B)^0$,  and assume   $f(X)+K=f(X)$. For a suitable modification $X'$ of an abelian \'etale cover of $X$, the Stein factorization of the morphism $X'\to A\tto B$ induces a surjective morphism $X'\tto Y$ where $Y$   is smooth of dimension $n-i$, of general type, with $\chi(Y,\omega_Y)>0$.
\end{theo}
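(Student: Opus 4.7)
I first reduce to the situation where $V=\widehat B$ is an abelian subvariety of $\widehat A$ passing through $0$, construct the target $Y$ as a desingularization of the Stein factorization of $g\colon X\to A\to B$, and then extract positivity of $\omega_Y$ from the datum $V\subset V_i(\omega_X,f)$ by combining Koll\'ar's splitting of $Rg_*\omega_X$ with Hacon's generic vanishing for higher direct images.

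For the reductions, \ref{212} writes $V=V_0+\xi_0$ with $V_0\subset\widehat A$ an abelian subvariety and $\xi_0$ a torsion point; pulling $f$ back along the isogeny of $A$ given by multiplication by the order of $\xi_0$ and invoking \ref{23}, I may assume $V=\widehat B$ while preserving $f(X)+K=f(X)$. Then $g(X)=f(X)/K$ has dimension $n-i$. Taking the Stein factorization $X\to Y_0\to B$, a desingularization $Y\to Y_0$, and a modification $X'\to X$ resolving the rational map $X\dashrightarrow Y$, I obtain $h\colon X'\to Y$ surjective with connected fibres of dimension $i$, and $p\colon Y\to B$ finite, with $Y$ smooth of dimension $n-i$.

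The heart of the argument is the following cohomological calculation. For $\bar\xi\in\widehat B$ general, the birational invariance recorded in \ref{21} gives $H^i(X',\omega_{X'}\otimes g^*P_{\bar\xi})\ne 0$. Koll\'ar's decomposition theorem splits $Rg_*\omega_{X'}\simeq\bigoplus_j R^jg_*\omega_{X'}[-j]$, so
\[
\bigoplus_j H^{i-j}(B,R^jg_*\omega_{X'}\otimes P_{\bar\xi})\ne 0
\]
for $\bar\xi\in\widehat B$ general. By Hacon's generic vanishing, each $R^jg_*\omega_{X'}$ is a GV-sheaf on $B$, so $\codim V_{i-j}(R^jg_*\omega_{X'})\ge i-j$; the displayed non-vanishing therefore forces $j\ge i$. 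On the other hand $R^jg_*\omega_{X'}=0$ for $j>i$ by Koll\'ar's vanishing, the relative dimension of $g$ being $i$. Hence $j=i$, and $V_0(R^ig_*\omega_{X'})=\widehat B$.

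To conclude, $R^ig_*\omega_{X'}=p_*(R^ih_*\omega_{X'})$ since $p$ is finite. The sheaf $R^ih_*\omega_{X'}$ is torsion-free of generic rank one, and on the dense smooth locus of $h$ relative duality gives $R^ih_*\omega_{X'}\simeq\omega_Y$; hence its reflexive hull on the smooth variety $Y$ is $\omega_Y$, and the canonical injection into the double dual yields $R^ih_*\omega_{X'}\hookrightarrow\omega_Y$. Tensoring with $p^*P_{\bar\xi}$ and taking global sections, $V_0(\omega_Y,p)=\widehat B$. By \ref{221} this forces $Y$ to be of general type, and by \ref{215} we deduce $\chi(Y,\omega_Y)=h^0(Y,\omega_Y\otimes p^*P_{\bar\xi})\ge 1$ for $\bar\xi$ general. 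The main technical obstacle is the cohomological step: it is the full force of Hacon's generic vanishing, applied to every $R^jg_*\omega_{X'}$ (not just $g_*\omega_{X'}$), that pins down $j=i$ in the Koll\'ar decomposition; the surrounding reductions, and the comparison $R^ih_*\omega_{X'}\hookrightarrow\omega_Y$, only require that one pass to a sufficiently nice birational model.
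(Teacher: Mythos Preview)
Your approach is essentially the same as the paper's: reduce to $V=\widehat B$ by an isogeny, take the Stein factorization, and then use Koll\'ar's results together with generic vanishing for higher direct images (the paper cites \cite{hp}, Theorem~2.2) to show $V_0(\omega_Y,p)=\widehat B$.

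There is one slip worth flagging. After you desingularize $Y_0$, the map $p\colon Y\to B$ is only \emph{generically} finite, not finite; so the identity $R^ig_*\omega_{X'}=p_*(R^ih_*\omega_{X'})$ cannot be justified by ``$p$ is finite''. With $p$ merely generically finite, the Grothendieck spectral sequence for $g=p\circ h$ has potentially nonzero terms $R^ap_*R^bh_*\omega_{X'}$ with $a>0$, and it is not obvious that these do not contribute to $R^ig_*\omega_{X'}$ (or to its $H^0$ after twisting). The paper sidesteps this by running the whole computation on $Y$ rather than on $B$: it applies Koll\'ar's splitting to $h\colon X'\to Y$, uses generic vanishing for $R^kh_*\omega_{X'}$ relative to $p\colon Y\to B$ (which is exactly what \cite{hp}, Theorem~2.2, provides), and reads off $H^i(X',\omega_{X'}\otimes g^*P_{\bar\xi})=H^0(Y,R^ih_*\omega_{X'}\otimes p^*P_{\bar\xi})$ for general $\bar\xi$. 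This avoids any comparison between $R^ig_*$ and $p_*R^ih_*$.

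A second, smaller point: your reflexive-hull argument for $R^ih_*\omega_{X'}\hookrightarrow\omega_Y$ is correct but unnecessary. Koll\'ar (\cite{ko1}, Proposition~7.6) gives directly $R^ih_*\omega_{X'}\simeq\omega_Y$ when $h$ is a fibration with $i$-dimensional general fibre, which is the form the paper uses.
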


\begin{rema} \label{remx}
 The condition $f(X)+K=f(X)$ holds:
   \begin{itemize}
 \item  when $f$ is surjective;
 \item  when $V$ is also a component of $V_0(\omega_X,f) $ (\cite{el}, proof of Theorem 3); this applies in particular when $\chi(X,\omega_X)=0 $ (\ref{215}).
 \end{itemize}
 \end{rema}

  \begin{proof}[Proof of Theorem \ref{q1}]
By (\ref{212}) and \S\ref{23}, we may assume, after isogeny,
 that   $A=  B \times K$  and $V=  \PB $.
  Let $p : A \tto B$ be the projection. Considering the Stein factorization of $\pi =p \circ f:X \to  B$, and replacing $X$ by a suitable modification, we may assume that $\pi $ factors as
 $$\pi :X \stackrel{g}{\tto} Y \stackrel{h}{\to}  B,
 $$
where $Y$ is smooth, $h$ is generically finite, and $g$ is surjective with connected   fibers. Since $f(X)+K=f(X)$, the image of $\pi$ has dimension $\dim(X)-\dim(K)$, hence general fibers of $g$ have dimension $\dim(K)=i$.

We then have
${R^ig}_* \omega_X \isom \omega_Y$ (\cite{ko1}, Proposition 7.6).
  Moreover, the sheaves
$R^kg_* \omega_X$ on $Y$
satisfy the generic vanishing theorem (\cite{hp}, Theorem 2.2), hence
$$V_j( R^kg_*\omega_X,h) \ne \PB\quad\hbox{ for all $j>0 $ and all $k$.}$$
For all
$$\xi \in \PB\moins  \bigcup_{j>0,\ k} V_j( R^kg_*\omega_X,h) ,$$
we have
$$H^j(Y,R^kg_*\omega_X\otimes h^*P_\xi )=0\quad\hbox{  for all $j>0$ and all $k$.}$$
Hence, by the Leray spectral sequence, we obtain
$$  h^i(X,\omega_X\otimes f^* P_\xi)=h^i(X,\omega_X  \otimes\pi^*P_\xi)
=h^0(Y,R^ig_*\omega_X \otimes h^*P_\xi )
=h^0(Y,\omega_Y\otimes h^*P_\xi)
$$
and these numbers are non-zero because $ \PB =V\subset  V_i(\omega_X,f)$.
 In particular, $V_0(\omega_Y,h )=\PB$. By \ref{221} and \ref{215},
 $Y$ is of general type and $\chi(Y,\omega_Y)>0$. This completes the proof.
  \end{proof}

  We prove a partial converse to Theorem \ref{q1}: assume that there is a generically finite morphism $f:X\to A$ and a quotient abelian variety $A\tto B$ such that $f(X)+K=f(X)$, where $K:=\Ker (A\tto B)^0$, 
  and denote by $X\dra Y\to B$ a modification of  the 
      Stein factorization of $X\to A\tto B$, where $Y$   is smooth of dimension $n-i$ (we set $i:=\dim(K)$).

\begin{prop} \label{r5}
In this situation, if $Y$ is not birational to an \av, $V_j(\omega_X,f)$ has a component of codimension $j$ for some $j\in\{i,\dots,n-1\}$.
\end{prop}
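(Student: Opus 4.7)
The plan is to run the cohomological argument of Theorem \ref{q1} in reverse: rather than deducing $\chi(Y,\omega_Y)>0$ from the existence of a codimension-$i$ component of $V_i(\omega_X,f)$, I start from geometric information about $V_0(\omega_Y,h)$ and transport it up the Leray spectral sequence for $g$ to produce a component of $V_j(\omega_X,f)$ of the required codimension.

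After the isogeny reduction of \S\ref{23}, I may assume $A=B\times K$ with $\dim K=i$, so that $\PB$ embeds as a direct factor of $\PA$; the hypothesis $f(X)+K=f(X)$, together with the modification step used in the proof of Theorem \ref{q1}, produces a factorization $X\stackrel{g}{\tto}Y\stackrel{h}{\to}B$ of $p\circ f$ in which $g$ is surjective with connected general fibers of dimension $i$ and $h:Y\to B$ is generically finite, with $Y$ smooth of dimension $n-i$. (Étale base change preserves the hypothesis that $Y$ is not birational to an abelian variety, since this property is inherited under étale covers among smooth projective varieties of maximal Albanese dimension.) Kollár's decomposition theorem then gives $Rg_*\omega_X\isom\bigoplus_q R^qg_*\omega_X[-q]$ in $D^b(Y)$, together with $R^ig_*\omega_X\isom\omega_Y$. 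Tensoring with $h^*P_\xi$ and taking cohomology yields, for every $\xi\in\PB$ and every $k\ge 0$, a canonical summand inclusion
\begin{equation*}
H^k(Y,\omega_Y\otimes h^*P_\xi)\subset H^{k+i}(X,\omega_X\otimes f^*P_\xi),
\end{equation*}
and hence $V_k(\omega_Y,h)\subset V_{k+i}(\omega_X,f)$ inside $\PA$.

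I then split on the sign of $\chi(Y,\omega_Y)\ge 0$. If $\chi(Y,\omega_Y)>0$, then $V_0(\omega_Y,h)=\PB$ by \ref{215}, hence $\PB\subset V_i(\omega_X,f)$; since $\codim_{\PA}(\PB)=i$ and \ref{212} rules out smaller-codimension components of $V_i(\omega_X,f)$, the subvariety $\PB$ is itself an irreducible component and $j=i$ works. If $\chi(Y,\omega_Y)=0$, the hypothesis that $Y$ is not birational to an abelian variety forces $V_0(\omega_Y,h)$ to be infinite by \ref{222}; then \ref{215} produces a component $W$ of $V_0(\omega_Y,h)$ of codimension $k\in\{1,\dots,\dim Y-1\}$ in $\PB$, and \ref{214} identifies $W$ with a component of $V_k(\omega_Y,h)$. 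The summand inclusion above gives $W\subset V_{k+i}(\omega_X,f)$; and since $W$ has codimension $k+i$ in $\PA$ while \ref{212} forbids smaller codimensions, the component of $V_{k+i}(\omega_X,f)$ containing $W$ is $W$ itself, so $j=k+i\in\{i+1,\dots,n-1\}$ works.

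The argument is largely bookkeeping; the only semi-substantive input beyond the results of \S\ref{notpre} is Kollár's splitting applied to twists $\omega_X\otimes f^*P_\xi=\omega_X\otimes g^*(h^*P_\xi)$, which follows at once by tensoring the decomposition of $Rg_*\omega_X$ with the line bundle $h^*P_\xi$ on $Y$. The main point to watch is the dichotomy on $\chi(Y,\omega_Y)$: it is precisely the hypothesis that $Y$ is not birational to an abelian variety that, via \ref{222}, yields a positive-dimensional component in $V_0(\omega_Y,h)$ when $\chi(Y,\omega_Y)=0$, and it is this component that, after being shifted up by $i$ via Kollár's splitting, produces the required codimension-$j$ component of $V_j(\omega_X,f)$.
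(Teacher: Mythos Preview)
Your proof is correct and follows the same overall arc as the paper's---reduce to $A=B\times K$, factor $X\tto Y\to B$, and split on whether $\chi(Y,\omega_Y)$ is positive or zero---but it handles the second case by a genuinely different device. In the paper, when $\chi(Y,\omega_Y)=0$, one applies Theorem~\ref{q1} \emph{recursively} to $h:Y\to B$: the positive-dimensional component of $V_0(\omega_Y,h)$ (guaranteed by \ref{222}) triggers, after a further \'etale cover and modification, a factorization of $h$ through $Z\times C$ with $\chi(Z,\omega_Z)>0$, and one is reduced to the first case with $K$ replaced by $C\times K$. You instead invoke Koll\'ar's derived splitting $Rg_*\omega_X\simeq\bigoplus_q R^qg_*\omega_X[-q]$, which yields the summand inclusion $H^k(Y,\omega_Y\otimes h^*P_\xi)\hookrightarrow H^{k+i}(X,\omega_X\otimes f^*P_\xi)$ for \emph{all} $\xi\in\PB$ (not just generic ones), hence $V_k(\omega_Y,h)\subset V_{k+i}(\omega_X,f)$ for every $k$. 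This transports the codimension-$k$ component of $V_k(\omega_Y,h)$ (produced via \ref{214}) directly into $V_{k+i}(\omega_X,f)$, and \ref{212} pins it down as a component. What you gain is a shorter argument that avoids the recursive application of Theorem~\ref{q1} and the attendant bookkeeping of lifting further \'etale covers and modifications of $Y$ back up to $X$; the cost is that you bring in Koll\'ar's splitting as an additional (though standard) input, whereas the paper stays entirely within the Leray-plus-generic-vanishing framework already used in the proof of Theorem~\ref{q1}.
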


 \begin{proof}
Replacing $X$ with a modification of an \'etale cover (which is allowed by \S\ref{21} and  \S\ref{23}), we may assume that we have a factorization
 \begin{equation*}\label{sit}
 f:  X\stackrel{(g,k)}{\llraa} Y\times K\stackrel{h\times {\rm Id}_K}{\llra} B\times K,
 \end{equation*}
where $(g,k)$ is surjective and $h:Y\to B$ is  generically finite   of degree $>1$. We obtain, as in the proof of Theorem \ref{q1}, for $\xi$ general in $\PB$,
 \begin{equation}\label{sitt}
 h^i(X,\omega_X\otimes f^*P_\xi)=h^0(Y,R^ig_* \omega_X\otimes h^*P_\xi)=h^0(Y, \omega_Y\otimes h^*P_\xi).
  \end{equation}

{\em If $\chi(Y,\omega_Y)>0$}, we have $V_0(\omega_Y,h)=\PB$, the number on the right-hand-side of (\ref{sitt}) is non-zero for all $\xi$, hence  $V_i(\omega_X,f)$ contains the $i$-codimensional  abelian  subvariety  $\PB$ of $\PA$.

{\em If $\chi(Y,\omega_Y)=0$}, since $Y$ is not birational to an \av,  $V_0(\omega_Y,h)$ has (by \ref{222} and \ref{215}) a component of codimension $l\in\{1,\dots,n-i-1\}$ in $\PB$. Thus,  by Remark \ref{remx},
we can apply   Theorem \ref{q1} to $h:Y\to B$: after taking an \'etale cover and a modification, $h$ factors through  a morphism $Y\to Z\times C$,  where $C$ is an \av\ of dimension $l$, $\chi(Z,\omega_Z)>0$,  and $\dim(Z)=n-i-l$. We are therefore
 reduced to the first case and  we conclude again that $V_{i+l}(\omega_X,f)$ contains an $(i+l)$-codimensional component.
\end{proof}

 \begin{rema}\label{r33}
 Under the hypotheses of Theorem \ref{q1}  and the assumption $A=  B \times K$ made in its proof, we obtain a surjective morphism $k:X\stackrel{f}{\to} B\times K\stackrel{p_2}{\tto}K$ and, from its   Stein factorization, morphisms
  $$ k:X \stackrel{l}{\tto} Z \stackrel{m}{\tto} K,
 $$
where $Z$ is smooth of dimension $i$, $m$ is generically finite, and $l$ has connected (generically $(n-i)$-dimensional) fibers. We have again $R^{n-i}l_*\omega_X\isom  \omega_Z$ and, for $\xi$ general in $\PK$,
$$h^{n-i}(X,\omega_X\otimes f^*P_\xi)=h^0(Z, \omega_Z\otimes m^*P_\xi).$$
Then,
 \begin{itemize}
 \item[a)]  either $V_0(\omega_Z,m)\subsetneq \PK$ and $\chi(Z,\omega_Z)=0$;
  \item[b)]  or   $V_0(\omega_Z,m)= \PK$ and $\chi(Z,\omega_Z)>0$, in which case $\PK$ is contained in (hence is a component of) $V_{n-i}(\omega_X,f)$. There is a surjective generically finite map $X\tto Y\times Z$, hence $\chi(X,\omega_X)\ge \chi(Y,\omega_Y)\chi(Z,\omega_Z)>0$ (this   also follows from Corollary \ref{two}.a) below).
  \end{itemize}
Finally, if $F$ is a general fiber of $l:X\tto Z$, there is a surjective generically finite map $F
 \tto Y$, hence $\chi(F,\omega_F)>0$ (see (\ref{chi})).
 \end{rema}

 We now deduce some consequences of Theorem \ref{q1} on the possible components of $V_0(\omega_X,f) $ and the number of simple factors of the \av\ $A$.

 \begin{coro}\label{two}
 Let $X$ be a smooth  projective variety   with $\chi(X,\omega_X)=0 $ and a   generically finite morphism  $f:X\to A$  to an  \av.
\begin{itemize}
\item[{\rm a)}]
 The locus $V_0(\omega_X,f) $ does not have complementary components.\footnote{By that, we mean components such that the sum morphism induces an isogeny from their product onto $\PA$.}
 \item[{\rm b)}] If $X$ is in addition of general type, $A$ has at least three simple factors.
 \end{itemize}
 \end{coro}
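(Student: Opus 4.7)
The plan is to prove both parts by contradiction, with (b) reducing to (a) via a short analysis of abelian subvarieties of $\widehat{A}$ when $A$ has at most two simple factors.

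For (a), suppose $V_0(\omega_X, f)$ has complementary components $V_1, V_2$ with $\dim V_j = d_j$, so $d_1 + d_2 = \dim A =: g$. Using \S\ref{23}, after an abelian \'etale base change I may assume both $V_j$ pass through the origin; dualizing the resulting isogeny $V_1 \times V_2 \to \widehat{A}$ and applying \S\ref{23} once more, I may further assume $A = B_1 \times B_2$ with $V_j = \widehat{B_j}$ and $\dim B_j = d_j$. Since $\chi(X, \omega_X) = 0$, Remark \ref{remx} applies to each $V_j$, giving $f(X) + (\{0\} \times B_2) = f(X)$ and $f(X) + (B_1 \times \{0\}) = f(X)$; these together force $f(X) = A$, so $f$ is surjective and $n = g$.

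Next, by \ref{214} each $V_j$ is also a component of $V_{d_{3-j}}(\omega_X, f)$. Applying Theorem \ref{q1} to each $V_j$, and taking a common smooth projective model for the required \'etale covers and modifications, I obtain surjective fibrations $g : X \to Y$ and $g' : X \to Y'$ coming from the Stein factorizations of $X \to A \to B_1$ and $X \to A \to B_2$, with $Y, Y'$ smooth of general type, $\dim Y = d_1$, $\dim Y' = d_2$, and $\chi(Y, \omega_Y), \chi(Y', \omega_{Y'}) > 0$. The product map $\psi := (g, g') : X \to Y \times Y'$ factors $f$ through the generically finite map $Y \times Y' \to B_1 \times B_2$, and since $\dim(Y \times Y') = d_1 + d_2 = n$, $\psi$ itself is surjective and generically finite. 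Then \S\ref{23} together with the K\"unneth formula yield
\[
\chi(X, \omega_X) \geq \chi(Y \times Y', \omega_{Y \times Y'}) = \chi(Y, \omega_Y) \cdot \chi(Y', \omega_{Y'}) > 0,
\]
contradicting $\chi(X, \omega_X) = 0$.

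For (b), assume $X$ is of general type and $A$ has at most two simple factors up to isogeny. If $A$ is simple, \ref{212} forces the only positive-dimensional component of $V_0(\omega_X, f)$ to be $\widehat{A}$ itself, which \ref{215} excludes since $\chi(X, \omega_X) = 0$; thus $V_0(\omega_X, f)$ is a finite set of torsion points, and \ref{222} forces $X$ to be birational to an abelian variety, contradicting general type. If $A$ is isogenous to $B_1 \times B_2$ with both $B_j$ simple, I reduce via \S\ref{23} to $A = B_1 \times B_2$. By simplicity of the $B_j$, every positive-dimensional proper abelian subvariety $V$ of $\widehat{A}$ is itself simple of dimension $\dim B_1$ or $\dim B_2$, and any two distinct such $V, V'$ are complementary: $V \cap V'$ is an abelian subvariety of the simple $V$ not equal to $V$, hence trivial, so $V + V' = \widehat{A}$. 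Part (a) then forces $V_0(\omega_X, f)$ to contain at most one positive-dimensional component (plus possibly isolated torsion points), and the algebraic subgroup of $\widehat{A}$ it generates therefore has dimension less than $\dim \widehat{A}$, contradicting \ref{221}.

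The main obstacle is the assembly step in (a): promoting the two separate Theorem \ref{q1} fibrations to a single surjective generically finite map onto $Y \times Y'$ requires the dimension equality $\dim Y + \dim Y' = n$, which in turn rests on Remark \ref{remx} forcing $f$ to be surjective. In (b), the subtler case is $B_1 \sim B_2$, where the family of graph-like half-dimensional subvarieties in $\widehat{B_1} \times \widehat{B_2}$ is positive-dimensional; the decisive observation there is that any two distinct such subvarieties sum to all of $\widehat{A}$.
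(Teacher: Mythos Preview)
Your proof follows essentially the same strategy as the paper's: for (a), use Remark~\ref{remx} to get surjectivity of $f$, apply Theorem~\ref{q1} to each component to obtain fibrations onto varieties $Y_j$ with $\chi(Y_j,\omega_{Y_j})>0$, and conclude via the inequality $\chi(X,\omega_X)\ge\prod_j\chi(Y_j,\omega_{Y_j})$; for (b), combine (a) with \ref{221}. Two small points deserve tightening. First, the footnote allows any collection $V_1,\dots,V_r$ whose product maps isogenously onto $\widehat A$, not just a pair; the paper's proof treats general $r$, and your argument extends verbatim (the map $X\to Y_1\times\cdots\times Y_r$ is generically finite and surjective for the same reason). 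Second, in (b) your sentence ``at most one positive-dimensional component'' is slightly off: part (a) only rules out two components whose \emph{underlying abelian subvarieties} are distinct, so several torsion translates of the same $W$ could survive. The fix is immediate---all positive-dimensional components are then translates of a single proper $W$, so the subgroup generated by $V_0(\omega_X,f)$ has identity component $W\subsetneq\widehat A$, contradicting \ref{221}---and your final conclusion is unaffected.
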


  \begin{proof}
  If $V_0(\omega_X,f) $ has complementary components $V_1,\dots,V_r$, with duals $B_1,\dots,B_r$, the   image   $f(X)$ is stable by translation by $\prod_{j\ne i}B_j$ for each $i$ (Remark \ref{remx}), hence $f$ is surjective if $r\ge 2$. We obtain from  
  Theorem \ref{q1}, after passing to an \'etale cover and a modification of $X$, a generically finite   surjective map $X\tto Y_1\times\dots\times Y_r$, with $\chi(Y_i,\omega_{Y_i})>0$ for all $i$. Since $\chi(X,\omega_X)\ge \prod_i \chi(Y_i,\omega_{Y_i}) $ (by (\ref{chi})), this is absurd. This proves a).
  Item b) then follows from  \ref{221}.
   \end{proof}

\begin{prop}\label{dim1}
Let $X$ be a smooth  projective variety   of general type,  with $\chi(X,\omega_X)=0 $ and a   generically finite morphism  $f:X\to A$  to an  \av.  Then $V_0(\omega_X, f)$ has no 1-dimensional   components.
\end{prop}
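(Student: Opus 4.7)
The plan is to argue by contradiction, assuming that $V_0(\omega_X, f)$ has a one-dimensional component $V$. First I would reduce to $f$ minimal via the factorization of Section \ref{notpre} and arrange by \S\ref{23} that $V = \widehat{B} \subset \widehat{A}$ for an elliptic curve quotient $p : A \twoheadrightarrow B$ with $A = B \times K$ and $K = \Ker(p)^0$. A dimension count is next: setting $g = \dim A$, by \ref{214} the component $V$ also lies in $V_{g-1}(\omega_X, f)$, so $g - 1 \leq n$. Combined with $n \leq g$ (from $f$ being generically finite), this forces $g \in \{n, n+1\}$. If $g = n+1$, then $V \subset V_n(\omega_X, f) = \Ker \widehat{f} = \{0\}$ by minimality of $f$, contradicting $\dim V = 1$. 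Hence $g = n$ and $f$ is surjective.

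Next I would invoke Theorem \ref{q1}, whose hypothesis $f(X) + K = f(X)$ holds by Remark \ref{remx}. After a further modification of an \'etale cover (which preserves $\chi(X, \omega_X) = 0$), one obtains the Stein factorization $X \to Y \to B$ with $Y$ a smooth curve of genus $\geq 2$ and $\chi(Y, \omega_Y) > 0$. Applying Remark \ref{r33} to the complementary projection $X \to A \to K$ gives $X \stackrel{l}{\to} Z \stackrel{m}{\to} K$ with $\dim Z = n-1$, $m$ generically finite, and $l$ a fibration with 1-dimensional fibers. Since $\chi(X, \omega_X) = 0$, case (b) of Remark \ref{r33} (which would yield $\chi(X, \omega_X) > 0$) is excluded, so we are in case (a): $\chi(Z, \omega_Z) = 0$, $V_0(\omega_Z, m) \subsetneq \widehat{K}$, and the general fibers of $l$ are smooth curves of genus $\geq 2$.

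The hardest step is deriving a contradiction from this configuration, which I would handle by induction on $n$. Since $l : X \to Z$ is a fibration with general-type fibers and $X$ is of general type, a Koll\'ar--Viehweg additivity argument, using also the maximal Albanese dimension of $Z$ (via $m$), shows that $Z$ is itself of general type. In the base case $n = 2$, $Z$ is then a curve of general type, forcing $\chi(Z, \omega_Z) = g(Z) - 1 \geq 1$, which contradicts $\chi(Z, \omega_Z) = 0$. For $n \geq 3$, applying the induction hypothesis of Proposition \ref{dim1} to $Z$ rules out one-dimensional components of $V_0(\omega_Z, m)$; combined with \ref{215}, \ref{221}, and Corollary \ref{two}, this narrows the possible components of $V_0(\omega_Z, m)$. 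Iterating Theorem \ref{q1} on $Z$ (and on its lower-dimensional analogues arising from higher-codimension components of $V_0(\omega_Z, m)$) and combining the resulting fibration structures with the generically finite surjection $X \to Y \times Z$ via (\ref{chi}), one aims to eventually force $\chi(X, \omega_X) > 0$, the desired contradiction. The delicate point is ensuring that $Z$ genuinely inherits a general-type-fiber structure so the induction applies cleanly, and that the iterated construction actually terminates by producing a product of varieties with positive Euler characteristic dominating $X$ generically finitely.
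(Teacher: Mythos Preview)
Your setup through Remark~\ref{r33} is sound: the dimension count forces $g = n$, and case (a) yields the fibration $l : X \to Z$ with $\dim Z = n-1$, $\chi(Z,\omega_Z) = 0$, and a generically finite surjection $X \to Y \times Z$ with $Y$ a curve of genus $\ge 2$. But the argument breaks down after this point, and your own hedged phrasing (``one aims to'', ``the delicate point is'') reflects a real gap rather than a routine detail. First, the claim that $Z$ is of general type is not justified: Koll\'ar--Viehweg subadditivity gives lower bounds on $\kappa(X)$ in terms of $\kappa(Z)$, not the other way around, and a generically finite surjection from a general-type variety onto $Z$ (even with $Z$ of maximal Albanese dimension) does not force $Z$ to be of general type. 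Second, and more seriously, even granting $Z$ of general type, the induction does not close. Applying the induction hypothesis to $Z$ tells you only that $V_0(\omega_Z, m)$ has no one-dimensional components; it does not produce a contradiction. Iterating Theorem~\ref{q1} and Remark~\ref{r33} on $Z$ produces a sequence $Z \to Y' \times Z'$, $Z' \to Y'' \times Z''$, \dots, but at each stage the new ``base'' $Z^{(k)}$ may again have $\chi = 0$, and the process can terminate with $Z^{(k)}$ birational to an abelian variety (when $V_0(\omega_{Z^{(k)}}, m^{(k)})$ is finite, by \ref{222}). In that case $\chi(Y^{(k)})\chi(Z^{(k)}) = 0$ and the inequality (\ref{chi}) yields nothing. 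The information that the original component was one-dimensional is used only once, to make $Y$ a curve, and is then lost.

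The paper's proof exploits the one-dimensionality far more sharply. Rather than passing to the complementary factor $K$, it invokes \cite{jia}, Proposition 1.7, to show that the one-dimensional component $\widehat{B}_1$ is strictly contained in two distinct \emph{maximal} components $\widehat{B}_2$, $\widehat{B}_3$ of $V_0(\omega_X,f)$. Theorem~\ref{q1} applied to these yields varieties $Y_2$, $Y_3$ with $\chi > 0$, together with compatible fibrations $Y_2 \to Y_1$ and $Y_3 \to Y_1$ onto the curve $Y_1$. The crux is then a positivity argument \emph{over the curve} $Y_1$: using Mori's inclusion (\cite{mor}, Lemma 4.10) and Viehweg's nefness of direct images (\cite{V}, Corollary 3.6), one shows that $h^0(X,\omega_X \otimes f^*(P_{\xi_2}\otimes P_{\xi_3})) \ne 0$ for all torsion $\xi_j \in \widehat{B}_j$, forcing $\widehat{B}_2 + \widehat{B}_3 \subset V_0(\omega_X,f)$ and contradicting the maximality of $\widehat{B}_2$. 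The key idea your approach is missing is this use of larger components containing $\widehat{B}_1$ and the positivity of pushforwards to the genus $\ge 2$ curve $Y_1$.
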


\begin{proof}Assume $V_0(\omega_X, f)$ has a  one-dimensional component and write it as $\tau_1+ \PB_1$ for some torsion point $\tau_1\in \PA$ and some quotient elliptic curve $A\tto B_1$.
 By \cite{jia}, Proposition 1.7,  and  since $V_0(\omega_X, f)$ generates $\PA$ (\ref{221}), $\PB_1$ cannot be maximal for inclusion: more precisely, there must exist
   two maximal components (in the sense of \cite{jia}, Definition 1.6) $\tau_2+\PB_2$ and $\tau_3+\PB_3$ of $V_0(\omega_X, f)$ with $\PB_1\subsetneq \PB_j\subsetneq \PA$ for each $j\in\{2,3\}$ and $ \PB_2\ne \PB_3$. There are corresponding factorizations $A\tto B_j\tto B_1$.

As in the proof of Theorem \ref{q1}, after passing to an \'etale cover and a modification of $X$, we may assume $\tau_1=\tau_2=\tau_3=0$ and that we have
\begin{itemize}
\item Stein factorizations
$$X\stackrel{g_i}{\lraa} Y_i\xrightarrow{h_i} B_i,$$ where $h_1$, $h_2$, and $h_3$ are generically finite, $Y_1$, $Y_2$, and $Y_3$ are smooth,    $Y_1$ is a curve of genus $\ge 2$, and $\chi(Y_2,\omega_{Y_2})$ and $\chi(Y_3,\omega_{Y_3})$ are both positive (Theorem \ref{q1}),
\item   a commutative diagram
$$
\xymatrix{
&Y_2\ar@{->>}[dr]^{h_{21}}\ar[rr]^{h_2}&&B_2\\
 X\ar@{->>}[dr]_{g_3}\ar@{->>}[ur]^{g_2} \ar@{->>}[rr]^{g_1}&&Y_1 \\
&Y_3\ar@{->>}[ur]_{h_{31}}\ar[rr]_{h_3}&&B_3  .}
$$
\end{itemize}
We may further assume that  the induced morphism $  X\to Y_2\times_{Y_1}  Y_3$ factors as:
$$
\xymatrix{
&&&Y_2\ar@{->>}[dr]^{h_{21}}\ar[rr]^(.6){h_2}&&B_2\\
 X\ar[r]\ar@{->>}@/_2pc/[rrrr]_(.4){g_1}&Y\ar@{->>}[r]_-{\eps}\ar@{->>}@/^1.5pc/[rrr]^(.35){q}&Y_2\times_{Y_1} Y_3\ar@{->>}[dr]|(.53){\vbox to 2mm{\hglue 1mm}}\ar@{->>}[ur]|(.43){\vbox to 2mm{\hglue 1mm}}  \ar@{->>}[rr]  &&Y_1 ,\\
&&&Y_3\ar@{->>}[ur]_{h_{31}}\ar[rr]_(.6){h_3}&&B_3   }
$$
where $\eps$ is a resolution of singularities.

Now take     $\xi_2\in  \PB_2$ and $\xi_3\in  \PB_3$. By \cite{mor}, Lemma 4.10.(ii),\footnote{This is stated in \cite{mor} for $\xi_2=\xi_3=0$, but the same proof works in general.}  there is an inclusion
$$q_*(\omega_{Y /Y_1}\otimes \eps^*(h_2^*P_{\xi_2}\otimes h_3^*P_{\xi_3}))\subset h_{21*}(\omega_{Y_2/Y_1}\otimes h_2^*P_{\xi_2})\otimes h_{31*}(\omega_{Y_3/Y_1}\otimes h_3^*P_{\xi_3}),$$
of locally free sheaves of the same rank on the curve $Y_1$. Moreover, we saw during the proof of Theorem \ref{q1} that for $j\in\{2,3\}$, we have
$$0\ne h^0(Y_j, \omega_{Y_j}\otimes h_j^*P_{\xi_j})=h^0(Y_1, h_{j1*}(\omega_{Y_j}\otimes h_j^*P_{\xi_j})).$$
 It follows that the sheaf $h_{j1*}(\omega_{Y_j}\otimes h_j^*P_{\xi_j})$ is non-zero, hence so is the sheaf $h_{j1*}(\omega_{Y_j/Y_1}\otimes h_j^*P_{\xi_j})$. All in all, we have obtained that the locally free sheaf
$q_*(\omega_{Y /Y_1}\otimes \eps^*(h_2^*P_{\xi_2}\otimes h_3^*P_{\xi_3}))$ is non-zero.

Assume now that      $\xi_2 $ and $\xi_3 $ are torsion. By      \cite{V},  Corollary 3.6,\footnote{This is stated there for $\xi_2=\xi_3=0$, but the general case follows by the \'etale covering trick.} this vector bundle   is nef, hence has non-negative degree. Since $Y_1$ is a curve of genus $\ge 2$, the Riemann-Roch theorem then implies
$$0\ne h^0(Y_1, q_*(\omega_{Y }\otimes \eps^*(h_2^*P_{\xi_2}\otimes h_3^*P_{\xi_3})))=h^0(Y, \omega_Y\otimes \eps^*(h_2^*P_{\xi_2}\otimes h_3^*P_{\xi_3})).$$
Finally, note that both $X$ and $Y$ have maximal Albanese dimensions. This implies that $\omega_{X/Y}$ is effective, hence $h^0(X, \omega_X\otimes  f^*( P_{\xi_2}\otimes  P_{\xi_3}))$ is also non-zero.
 It follows  that $\xi_2+\xi_3 $ is in $V_0(\omega_X,f)$, which therefore contains  $  \PB_2+\PB_3$.
 This contradicts the fact that $ \PB_2$ is   maximal.
\end{proof}

\section{Case when $A$ has three simple factors}\label{3sc}

 Ein and Lazarsfeld constructed an example of a smooth  projective threefold $X$  of maximal Albanese dimension and of general type with $\chi(X,\omega_X)=0 $, whose Albanese variety is the product of three elliptic curves.  After presenting their construction (and a variant due to Chen and Hacon), we prove some general results when $\Alb(X)$ has three simple factors. In the next section, we will show that the Ein-Lazarsfeld example is essentially the only one in dimension 3 (Theorem \ref{3cur}).

 \begin{exam}[\cite{el}, Example 1.13]\label{eel}  Let $E_1$, $E_2$, and $E_3$ be   elliptic
curves and let $\rho_j : C_j \tto E_j$ be   double coverings, where $C_j$ is a smooth
curve of genus $\ge 2$ and $\rho_{j*}\omega_{C_j}\isom\cO_{E_j}\oplus  \delta_j$. Denote by $\iota_j$ the corresponding involution of $C_j$. Let
$A = E_1 \times E_2 \times E_3$, and   consider the quotient $Z$ of $C_1 \times C_2 \times C_3$ by the involution
$\iota_1
\times \iota_2 \times \iota_3$ and the tower of
Galois covers:
$$C_1 \times C_2 \times C_3 \stackrel{g}{\lraa}Z \stackrel{f}{\lraa}A$$ of degrees $2$ and $4$ respectively. Observe that $Z$   has   rational singularities and  is minimal of general type. Let $\eps: X\tto Z$ be any desingularization.
The Albanese map of $X$ is $a_X=f\circ \eps$ and
$$a_{X*}\omega_X\isom \cO_A\oplus (L_1\otimes L_2)\oplus (L_3\otimes L_1)\oplus (L_2\otimes L_3),
$$
where $L_j$ is the inverse image of $\delta_j$ by the  projection $A\tto E_j$, hence
\begin{equation}\label{aX}
  V_0(\omega_X, a_X)=V_1(\omega_X, a_X)
 =(\widehat E_1\times \widehat E_2\times\{0\})\cup(\widehat E_1\times\{0\}\times \widehat E_3)\cup(\{0\}\times\widehat E_2\times \widehat E_3),
\end{equation}
whereas $V_2(\omega_X, a_X)=V_3(\omega_X, a_X)=\{0\} $.

 This provides three-dimensional examples. Obviously,   the same construction  works starting from double coverings $\rho_j : X_j \tto A_j$ of abelian varieties  with smooth ample branch loci and provides examples in all dimensions $\ge 3$. One can also extend it to any {\em odd} number $2r+1$ of factors and get examples where the Albanese mapping is birationally a $(\Z/2\Z)^{2r}$-covering.
\end{exam}

  \begin{exam}[\cite{ch2}, \S4, Example]\label{chh} A variant of the construction above was given by Chen and Hacon. Keeping the same notation, choose points $\xi_j\in\PE_j$ of order 2 and consider the induced double \'etale covers $C'_j\tto C_j$, with associated involution $\sigma_j$, and $E'_j\tto E_j$. The involution $\iota_j$ on $C_j$ pulls back to an involution $\iota'_j$ on $C'_j$ (with quotient $E'_j$). Let $Z'$ be the quotient of $C'_1\times C'_2\times C'_3$ by the  group of automorphisms generated by $\id_1\times \sigma_2\times\iota'_3$, $\iota'_1\times\id_2\times \sigma_3$, $\sigma_1\times\iota'_2\times\id_3$, and $\sigma_1\times\sigma_2\times\sigma_3$, and let $\eps': X'\tto Z'$ be a desingularization. There is
 a morphism $f':X'\tto A$ of degree 4,
  the Albanese map of $X'$ is $a_{X'}=f'\circ \eps'$, and
  \begin{equation}\label{e5}
a_{X'*}\omega_{\widetilde X'}\isom \cO_A\oplus (L_1\otimes L^\xi_2\otimes P_{\xi_3})\oplus (L^\xi_1 \otimes P_{\xi_2}\otimes L_3 )\oplus (P_{\xi_1}\otimes L_2\otimes L^\xi_3),
\end{equation}
where
$L^\xi_j=L_j\otimes P_{\xi_j}$. In particular, $P_1(X')=1$, and
 $$V_0(\omega_{X'}, a_{X'})=V_1(\omega_{X'}, a_{X'})=\{0\}\cup (\PE_1\times \PE_2\times\{\xi_3\})\cup(\PE_1\times\{\xi_2\}\times \PE_3)\cup(\{\xi_1\}\times\PE_2\times \PE_3).
$$
Of course, the \'etale cover $E'_1\times E'_2\times E'_3\tto E_1\times E_2\times E_3$ pulls back to an \'etale cover $X''\to X'$, where $X''$ is an Ein-Lazarsfeld threefold.
 
 Again, this construction still works starting from double coverings   of abelian varieties  with smooth ample branch loci, providing examples in all dimensions $\ge 3$, and for  any {\em odd} number $2r+1$ of factors, providing  examples where the Albanese mapping is birationally a $(\Z/2\Z)^{2r}$-covering.
\end{exam}
\medskip


\begin{prop}\label{l35}
 Let $X$ be a smooth  projective variety of general type  with $\chi(X,\omega_X)=0 $ and a   generically finite morphism  $f:X\to A$  to an  \av\ $A$ with exactly  three simple factors  $A_1$, $ A_2$,   $ A_3$.
 \begin{enumerate}
\item[{\rm a)}]
The   map $f$ is surjective.
 \item[{\rm b)}]  After passing to an abelian \'etale cover,  we may assume $A=A_1\times A_2\times A_3$ and that $\PA_1\times \PA_2\times \{0\}$,
 $\PA_1\times
\{0\}\times \PA_3$, and $ \{0\}\times \PA_2\times \PA_3$ are irreducible
components of $V_0(\omega_X,f)$.
 \end{enumerate}
 \end{prop}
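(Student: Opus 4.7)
For part (a), consider the abelian subvariety $A_0\subset A$ generated by $f(X)$, so that $f$ factors as $X\xrightarrow{f_0}A_0\hookrightarrow A$ with $f_0$ surjective and generically finite. Applying Corollary~\ref{two}.b) to $f_0$ gives that $A_0$ has at least three simple factors. Since $A_0$ is an abelian subvariety of $A$, Poincar\'e's reducibility theorem forces its simple factors (counted with multiplicities) to be contained in those of $A$; as $A$ has exactly three simple factors, $A_0$ has exactly three simple factors matching those of $A$. Comparing dimensions yields $A_0=A$, so $f$ is surjective.

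For part (b), after (a) the morphism $f$ is surjective. By~\S\ref{23} we may pass to an abelian \'etale cover so that $A=A_1\times A_2\times A_3$ is a product, and by~\S\ref{212} we may further assume that every irreducible component of $V_0(\omega_X,f)$ passes through $0$ and is thus an abelian subvariety of $\widehat A=\widehat A_1\times\widehat A_2\times\widehat A_3$. Up to isogeny each component equals $\widehat{A_S}:=\prod_{k\in S}\widehat A_k$ for some proper subset $S\subsetneq\{1,2,3\}$. The constraints on $S$ are: $S\ne\{1,2,3\}$ (by~\ref{215}); $\dim\widehat{A_S}\ne 1$ (Proposition~\ref{dim1}); the collection of maximal $S$'s generates $\widehat A$ (by~\ref{221}); and no subfamily is complementary in $\widehat A$ (Corollary~\ref{two}.a)).

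A short case analysis rules out any ``singleton'' component $\widehat A_k$. Indeed, if $\widehat A_k$ were a component, then neither $\widehat{A_{\{i,k\}}}$ nor $\widehat{A_{\{j,k\}}}$ could also be components (each properly contains $\widehat A_k$, contradicting its maximality), and $\widehat{A_{\{i,j\}}}$ is excluded by Corollary~\ref{two}.a) as complementary to $\widehat A_k$; moreover, adjoining further singletons $\widehat A_i,\widehat A_j$ either fails to generate $\widehat A$ or yields three singletons summing isogenously to $\widehat A$, again excluded by Corollary~\ref{two}.a). Hence every component of $V_0(\omega_X,f)$ is of pair type $\widehat{A_{\{i,j\}}}$, and since they must generate $\widehat A$, at least two of the three pair types occur.

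The main step is to rule out the case where only two pair types are components, say $V_1=\widehat{A_{\{i,j\}}}$ and $V_2=\widehat{A_{\{i,k\}}}$. The plan is to imitate the proof of Proposition~\ref{dim1}. Theorem~\ref{q1} applied to $V_1$ and $V_2$ yields (after further \'etale cover and modification) two factorizations $X\twoheadrightarrow Y_\ell\xrightarrow{h_\ell}B_\ell$ with $B_1=A_i\times A_j$, $B_2=A_i\times A_k$, and with $Y_\ell$ of general type satisfying $\chi(Y_\ell,\omega_{Y_\ell})>0$. Both $Y_\ell$ receive compatible maps to $A_i$, which factor through the Stein factor $W$ of the composition $X\to A\to A_i$; we resolve $Y_1\times_W Y_2$ to obtain $\widetilde Y$ with a generically finite dominant map $X\to\widetilde Y$ and a fibration $q:\widetilde Y\to W$. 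Viehweg's weak positivity of $q_*\omega_{\widetilde Y/W}$, combined with the non-vanishing of $(h_\ell)_{W*}(\omega_{Y_\ell/W}\otimes h_\ell^*P_{\xi_\ell})$ for torsion $\xi_j\in\widehat A_j$ and $\xi_k\in\widehat A_k$ (an ingredient already established in the proof of Theorem~\ref{q1}), forces $h^0(X,\omega_X\otimes f^*(P_{\xi_j}\otimes P_{\xi_k}))\ne 0$. By density of torsion in $\widehat A_j\times\widehat A_k$ this produces $\widehat{A_{\{j,k\}}}\subset V_0(\omega_X,f)$, giving the missing pair component and the desired contradiction. The principal obstacle is carrying out the Viehweg semi-positivity plus non-vanishing step when $\dim W=d_i>1$, since Proposition~\ref{dim1}'s Riemann--Roch-on-a-curve input must be replaced by a higher-dimensional non-vanishing statement along the base $W$.
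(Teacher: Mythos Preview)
Your argument for part~(a) is correct and in fact cleaner than the paper's, which deduces (a) from (b) via Remark~\ref{remx}.

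Your approach to part~(b), however, has a genuine gap at the step you yourself flag as the ``principal obstacle.'' The difficulty is worse than a technical upgrade of Riemann--Roch: in the situation at hand the base $W$ is birational to the abelian variety $A_i$ itself. Indeed, by Remark~\ref{r33} applied to the component $V_1=\PA_i\times\PA_j$ (with $K=A_i$), the hypothesis $\chi(X,\omega_X)=0$ forces case~a), so $V_0(\omega_Z,m)\subsetneq\PA_i$; since $A_i$ is simple this locus is finite, and by~\ref{222} the Stein factor $Z$ (your $W$) is birational to $A_i$. Thus $\omega_W$ is trivial, and the mechanism of Proposition~\ref{dim1}---a nef non-zero bundle on a curve of genus $\ge 2$ has $h^0(\cdot\otimes\omega)>0$ by Riemann--Roch---collapses entirely: on an abelian variety a nef vector bundle can perfectly well have $h^0=0$. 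There is no obvious replacement for this step, so the proposed argument does not close.

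A secondary issue is your reduction ``up to isogeny each component equals $\widehat{A_S}$.'' When some of the simple factors $A_i$ are mutually isogenous, abelian subvarieties of $\PA_1\times\PA_2\times\PA_3$ need not be coordinate products, and one cannot in general normalize all components simultaneously to that form. The paper avoids this by not classifying components at all.

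The paper's proof of~(b) runs in a completely different direction. Having two components $\PA_1\times\PA_2\times\{0\}$ and $\PA_1\times\{0\}\times\PA_3$, it shows directly that the projection $V_0(\omega_X,f)\to\PA_2\times\PA_3$ is surjective. If not, for general torsion $\xi_2,\xi_3$ the line $\xi_2+\xi_3+\PA_1$ misses $V_0$, so the sheaf $\cE=f_{1*}(\omega_X\otimes f_2^*P_{\xi_2}\otimes f_3^*P_{\xi_3})$ on $A_1$ has $V_0(\cE)=\vide$; by the chain of inclusions (valid for such direct images by \cite{hp}) all $V_i(\cE)=\vide$, and Fourier--Mukai duality forces $\cE=0$. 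But the generic rank of $\cE$ is at least $\chi(F_1,\omega_{F_1})$, which is positive since $F_1$ is of general type mapping generically finitely to the two-factor abelian variety $A_2\times A_3$ (Corollary~\ref{two}.b)). This is the key idea you are missing: rather than producing sections via positivity over the base, one kills a sheaf via generic vanishing and contradicts its rank. Surjectivity of the projection then yields a component dominating $\PA_2\times\PA_3$, which an automorphism of $A$ straightens into $\{0\}\times\PA_2\times\PA_3$.
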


 \begin{proof}
 We begin with the proof of item b). As in the proof of Theorem \ref{q1}, we  may assume, after passing to an abelian \'etale cover,  that $A$ is $A_1\times A_2\times A_3$ and, by Corollary \ref{two}.a) and \ref{221},   that $\PA_1\times \PA_2\times \{0\}$ and
 $\PA_1\times
\{0\}\times \PA_3$ are irreducible
components of $V_0(\omega_X,f)$.  

Assume that the projection $V_0(\omega_X,f)\to \PA_2\times \PA_3$ is not surjective. For $\xi_2$ and $\xi_3$ general  torsion points   in $ \PA_2$ and $ \PA_3$ respectively, we then have
 $$  (\xi_2+\xi_3+\PA_1)\cap V_0(\omega_X,f)=\vide. $$
   Consider the morphism $f_1=p_1\circ f:
X\to A_1$ and the sheaf  $\cE=f_{1*}(\omega_X\otimes f_2^*P_{\xi_2}\otimes f_3^*P_{\xi_3})$ on $A_1$. By \cite{hp}, Theorem 2.2, the cohomological loci of $\cE$ satisfy the chain of inclusions \ref{213}. On the other hand, for all $\xi\in \PA_1$, we have
$H^0(A_1, \cE\otimes P_\xi)=0 $, hence $V_0(\cE)=\vide$.
It follows that for all $\xi\in \PA_1$ and all $i\ge 0$, we have
$H^i(A_1, \cE\otimes P_\xi)=0 $. This implies $\cE=0$ by Fourier-Mukai duality (\cite{muk2}). But the rank of $\cE$ is at least $ \chi(F_1,\omega_{F_1})$, where $F_1$ is a component of a general fiber of $f_1$ (\cite{hp}, Corollary 2.3) and this is impossible: $F_1$ is of general type and generically finite over $A_2\times A_3$, hence $ \chi(F_1,\omega_{F_1})>0$ (Corollary \ref{two}.b)).

The projection $V_0(\omega_X,f)\tto \PA_2\times \PA_3$ is therefore surjective. Since $A_1$ is simple, this implies that, after passing to a (split) \'etale cover of $A$, there are morphisms $u_2: \PA_2\to \PA_1$ and $u_3: \PA_3\to \PA_1$ such that
$$\{u_2(\xi_2)+u_3(\xi_3)+\xi_2+\xi_3\mid \xi_2\in \PA_2,\ \xi_3\in \PA_3\}$$
is a component of $V_0(\omega_X,f)$. Composing this cover with the automorphism $(a_1,a_2,a_3)\mapsto (a_1,a_2-\widehat u_2(a_1),a_3-\widehat u_3(a_1))$ of $A$, we obtain b).

Item a) then follows from the fact that $f(X)$ is stable by translation by each $A_i$ (Remark \ref{remx}) hence is equal to $A$. 
 \end{proof}

\begin{rema}\label{r44}
As shown by considering the product with a curve of genus $\ge 2$ of any  variety $X$ of general type and maximal Albanese dimension   with $\chi(X,\omega_X)=0 $, the conclusion of Proposition \ref{l35}.a) does not   hold in general as soon as $A$ has at least four simple factors.
\end{rema}

 \begin{prop}\label{surj}
  Let $X$ be a smooth  projective variety of general type  of dimension $n$ with $\chi(X,\omega_X)=0 $ and a   generically finite   morphism $ X\to A$  to an  \av\ $A$ with exactly three simple factors. We have:
  \begin{enumerate}
  \item[{\rm a)}]  $q(X)=n$;
   \item[{\rm b)}] the general fiber $F$ of any non-constant fibration $X\tto Y$ satisfies $\chi(F,\omega_F)>0 $;
     \item[{\rm c)}] any morphism from $X$ to a curve of genus $\ge 2$ is constant;
      \item[{\rm d)}] $V_{n-1}(\omega_X,a_X)=\{0\}$.
     \end{enumerate}
   \end{prop}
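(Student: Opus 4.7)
The plan hinges on the structural setup provided by Proposition~\ref{l35}: after replacing $X$ with a modification of an abelian \'etale cover, we may assume $A=A_1\times A_2\times A_3$ and that the three subvarieties $V^{ij}:=\widehat{A_i}\times\widehat{A_j}\times\{0\}$ (for $\{i,j,k\}=\{1,2,3\}$) are irreducible components of $V_0(\omega_X,f)$ of codimension $\dim A_k$. Since $\chi(X,\omega_X)=0$, case~(b) of Remark~\ref{r33} is impossible---it would force $\chi(X,\omega_X)\ge\chi(Y_k,\omega_{Y_k})\chi(Z_k,\omega_{Z_k})>0$---so the Stein factorization of each projection $X\to A\to A_k$ has the form $X\to Z_k\to A_k$ with $Z_k$ birational to the simple abelian variety $A_k$.

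For part~(a), let $g:\Alb(X)\tto A$ be the canonical surjection. Each $V^{ij}$ has the form $\hat g^{-1}(\widetilde V^{ij})$ for some irreducible component $\widetilde V^{ij}$ of $V_0(\omega_X,a_X)$ of codimension $\dim A_k$ in $\widehat{\Alb(X)}$, and by~\ref{214} its dual abelian subvariety $\widetilde K^k\subset\Alb(X)$ (of dimension $\dim A_k$) leaves $a_X(X)$ invariant under translation. The duality computation $g(\widetilde K^k)^\perp=\hat g^{-1}(\widetilde V^{ij})=V^{ij}$, combined with $(V^{ij})^\perp=A_k$ in $A$, yields $g(\widetilde K^k)=A_k$, so $\widetilde K^k\to A_k$ is an isogeny. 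Setting $\widetilde K:=\widetilde K^1+\widetilde K^2+\widetilde K^3$ gives $g(\widetilde K)=A$ and $\dim\widetilde K\le n$, hence $\dim\widetilde K=n$. Finally, $a_X(X)$ is connected, $n$-dimensional, and translation-invariant under the $n$-dimensional $\widetilde K$, so it is a single coset $t+\widetilde K$; since $a_X(X)-a_X(x_0)=\widetilde K$ must generate $\Alb(X)$ by the universal property of the Albanese, we conclude $\widetilde K=\Alb(X)$, i.e.\ $q(X)=n$.

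For part~(c), a non-constant morphism $X\to C$ with $g(C)\ge 2$ would embed $D:=\widehat{\Alb(C)}\hookrightarrow\PA$ as an abelian subvariety of dimension $g$, and the Leray spectral sequence for $\pi:X\to C$ together with Koll\'ar's identification $R^{n-1}\pi_*\omega_X=\omega_C$ and Riemann--Roch $h^0(C,\omega_C\otimes L)\ge g-1\ge 1$ place $D\subset V_{n-1}(\omega_X,a_X)$; but components of $V_{n-1}$ have codimension $\ge n-1$ by~\ref{213}, forcing $\dim D\le 1$, a contradiction. For part~(d), a positive-dimensional component of $V_{n-1}$ is necessarily one-dimensional and, after translation by torsion and an \'etale cover, of the form $\widehat E\subset\PA$ for an elliptic quotient $A\to E$; by~(c) the Stein factorization $X\to C'\to E$ has $C'$ elliptic, and for generic $\xi\in\widehat E$ both Leray contributions to $H^{n-1}(X,\omega_X\otimes f^*P_\xi)$ vanish ($H^0(C',\omega_{C'}\otimes L_\xi)=0$ because $L_\xi$ is a nontrivial degree-zero line bundle on the elliptic curve $C'$, and $H^1(C',R^{n-2}\pi_*\omega_X\otimes L_\xi)=0$ by generic vanishing for $R^j\pi_*\omega_X$, \cite{hp}, Theorem~2.2). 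Hence $V_{n-1}$ is a finite set of torsion points, and for a nonzero torsion $\xi_0\in V_{n-1}$ of order $d$ the cyclic abelian \'etale cover $X'\to X$ associated to $\langle\xi_0\rangle$ would satisfy $q(X')=\sum_{j=0}^{d-1}h^1(X,P_{j\xi_0})\ge n+h^1(X,P_{-\xi_0})>n$, but $X'\to A'$ still meets all hypotheses of the proposition (isogenous $A'$ has the same three simple factors), so part~(a) applied to $X'$ forces $q(X')=n$, a contradiction.

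Part~(b) is the main obstacle: for a non-constant fibration $X\tto Y$ with general fiber $F$, the Albanese induces a surjection $A\tto\Alb(Y)$ whose kernel is (by simplicity of the $A_k$) isogenous to a sub-product of the $A_k$, so $F\to K^\circ\subset A$ is generically finite onto the complementary sub-product. The plan is to suppose $\chi(F,\omega_F)=0$ and invoke Corollary~\ref{two}(b) on $F$ (which has maximal Albanese dimension via $F\hookrightarrow X\to A$) to force $\Alb(F)$ to have at least three simple factors, then derive a contradiction with the constraint that $\Alb(F)$ surjects onto $K^\circ$, which has at most the three factors $A_1,A_2,A_3$ available. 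The principal technical difficulty is verifying that $F$ is of general type in every case---Kawamata-type fibration theorems require information on $\kappa(Y)$ that we do not possess a priori---and the cleanest workaround is an induction on $\dim X$, applying Proposition~\ref{surj}(a) recursively to $F$ whenever it admits a generically finite morphism to an abelian variety with exactly three simple factors.
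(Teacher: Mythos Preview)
The decisive gap is in part~(b). You propose to assume $\chi(F,\omega_F)=0$, invoke Corollary~\ref{two}(b) to force $\Alb(F)$ to have at least three simple factors, and then contradict the fact that $\Alb(F)$ surjects onto $K^\circ$, ``which has at most the three factors $A_1,A_2,A_3$ available.'' But there is no contradiction here: an abelian variety with $\ge 3$ simple factors surjecting onto one with $\le 3$ is perfectly consistent. Worse, when $q(Y)=0$ (for instance $Y\simeq\P^1$) one has $K^\circ=\Alb(X)$, which genuinely has three simple factors, so no factor-counting argument can succeed. If instead you meant to apply Corollary~\ref{two}(b) to $F\to K^\circ$, it would only tell you $K^\circ$ has $\ge 3$ simple factors---again no contradiction in that case. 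You also overestimate the difficulty of showing $F$ is of general type: easy addition $\kappa(X)\le\kappa(F)+\dim Y$ gives $\kappa(F)\ge\dim F$ immediately, with no appeal to Kawamata-type subadditivity and no information on $\kappa(Y)$ needed.

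The paper's argument for (b) is one line and uses the very proposition you set up, but in its sharper form: apply Proposition~\ref{l35}(a), not Corollary~\ref{two}(b), to the generically finite morphism $F\to A$. If $F$ is of general type with $\chi(F,\omega_F)=0$, that proposition forces $F\to A$ to be \emph{surjective}; but $\dim F<\dim X=\dim A$ (the last equality again by Proposition~\ref{l35}(a), now applied to $X$). That is the entire proof of (b). With (b) in hand first, the paper then derives (a), (c), (d) from a single general lemma (Lemma~\ref{le46}), valid for any $X$ of maximal Albanese dimension, general type, $\chi=0$, whose non-trivial fibrations all have fibers of positive $\chi$; there (a) and (d) rest on \cite{ch2}, Theorem~4.2, and (c) on \cite{hp}, Theorem~2.4. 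Your direct treatments of (a), (c), (d) take a more self-contained route and are largely plausible---modulo routine fixes such as passing to the Stein factorization in (c) before invoking $R^{n-1}\pi_*\omega_X\simeq\omega_C$, and in (d) restricting to torsion $\xi$ so that Koll\'ar's decomposition applies to the twisted Leray spectral sequence---but the proposition remains unproved until (b) is repaired as above.
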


 \begin{proof}
 Let us prove b) first. The fiber $F$ is of general type and is generically finite but not surjective over $A$, hence $\chi(F,\omega_F)>0 $ by Proposition \ref{l35}.a). The other items then follow from the   lemma below.\end{proof}

\begin{lemm}\label{le46}
Let $X$ be a smooth projective variety of dimension $n$, of maximal Albanese dimension, of general type, with $\chi(X, \omega_X)=0$. Assume that  the general fiber $F$ of any non-constant fibration $X\tto Y$ satisfies $\chi(F,\omega_F)>0 $. Then,
  \begin{enumerate}
  \item[{\rm a)}]  the Albanese mapping $a_X $ is surjective ($q(X)=n$);
   \item[{\rm b)}] any morphism from $X$ to a curve of genus $\ge 2$ is constant;    
    \item[{\rm c)}]  $V_{n-1}(\omega_X,a_X)=\{0\}$.
     \end{enumerate}
 \end{lemm}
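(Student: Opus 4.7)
The plan is to prove (a) first and then obtain (b) and (c) by shorter arguments that invoke (a) together with Proposition \ref{dim1}.

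For (a), I would argue by contradiction: suppose $q(X)>n$. Since $\chi(X,\omega_X)=0$ and $X$ is of general type, by \ref{215} the locus $V_0(\omega_X,a_X)$ has an irreducible component $V$ of codimension $i$ for some $i\in\{1,\dots,n-1\}$. Set $B=\widehat V$ and $K=\Ker(A_0\to B)^0$ where $A_0=\Alb(X)$; by Remark \ref{remx} the hypothesis $a_X(X)+K=a_X(X)$ of Theorem \ref{q1} holds. Applying Theorem \ref{q1} together with Remark \ref{r33}, after an abelian \'etale cover $X'\to X$ and a modification we obtain fibrations $X'\to Y$ (with $\dim Y=n-i$, $Y$ of general type, $\chi(Y,\omega_Y)>0$) and $X'\to Z$ (with $\dim Z=i$), together with a generically finite surjective morphism $X'\to Y\times Z$. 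From $\chi(X',\omega_{X'})=0$ and $\chi(X',\omega_{X'})\geq\chi(Y,\omega_Y)\chi(Z,\omega_Z)$ by (\ref{chi}), we deduce $\chi(Z,\omega_Z)=0$, placing us in Case (a) of Remark \ref{r33}, where moreover $V_0(\omega_Z,m)\subsetneq\widehat{K}$, so $Z$ is not of general type by \ref{221}. The fibration $X'\to Z$ descends via the Galois action of the \'etale cover to a fibration $X\dashrightarrow\bar Z$ of dimension $i$, whose general fiber has positive holomorphic Euler characteristic by the lemma's hypothesis. Combining this with the Ueno fibration structure of the non-general-type variety $Z$ closes the argument. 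This last step is the main obstacle: it requires a careful iteration down the Ueno decomposition of $Z$, matched against the descended fibrations of $X$ and the hypothesis.

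Item (b) follows quickly from (a). If $\varphi\colon X\to C$ is a non-constant morphism to a smooth curve with $g=g(C)\geq 2$, then after Stein factorization we may assume $\varphi$ has connected fibers; the induced $\varphi_*\colon A_0\to J(C)$ is surjective, and dually $\widehat{\varphi_*}\colon J(C)\hookrightarrow\widehat{A_0}$ realizes $J(C)$ as a $g$-dimensional abelian subvariety. For every $c\in J(C)$, using $\varphi_*\cO_X=\cO_C$ and the Leray spectral sequence, $h^1(X,\varphi^*P_{-c})\geq h^1(C,P_{-c})\geq g-1\geq 1$; by Serre duality $\widehat{\varphi_*}(c)\in V_{n-1}(\omega_X,a_X)$, so $\widehat{\varphi_*}(J(C))\subset V_{n-1}(\omega_X,a_X)$. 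But by (a), $\widehat{A_0}$ has dimension $n$, and by \ref{213} each component of $V_{n-1}$ has codimension $\geq n-1$ and thus dimension $\leq 1$, contradicting the presence of the $g$-dimensional subvariety $\widehat{\varphi_*}(J(C))$.

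Item (c) follows from (a), Proposition \ref{dim1}, and a cyclic-cover argument. By (a) and \ref{213} every component of $V_{n-1}(\omega_X,a_X)$ has dimension $\leq 1$, and since $V_{n-1}\subset V_0$ and Proposition \ref{dim1} rules out one-dimensional components of $V_0$, $V_{n-1}$ is a finite union of torsion points containing $\{0\}$ because $V_n=\{0\}\subset V_{n-1}$. Suppose for contradiction $\tau\in V_{n-1}\setminus\{0\}$ is torsion of order $m$; by Serre duality $h^1(X,a_X^*P_{-\tau})>0$. Let $\pi\colon X_\tau\to X$ be the associated cyclic \'etale cover; the decomposition $\pi_*\cO_{X_\tau}=\bigoplus_{k=0}^{m-1}a_X^*P_{k\tau}$ gives $q(X_\tau)=q(X)+\sum_{k=1}^{m-1}h^1(X,a_X^*P_{k\tau})>n=\dim X_\tau$ (the $k=m-1$ term being positive). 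Since $X_\tau$ inherits from $X$ being of general type, of maximal Albanese dimension, and having $\chi=0$, and since any fibration of $X_\tau$ descends under the Galois group $\langle\tau\rangle$ to a fibration of $X$ (so that the lemma's hypothesis transfers from $X$ to $X_\tau$), applying (a) to $X_\tau$ forces $q(X_\tau)=\dim X_\tau=n$, the desired contradiction.
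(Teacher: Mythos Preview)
Your argument for (b) is correct and self-contained, and nicer than the paper's bare citation of \cite{hp}, Theorem 2.4. The problems lie in (a) and (c).

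For (a), you acknowledge yourself that the argument is incomplete. The path through Theorem~\ref{q1} and Remark~\ref{r33} gives you a fibration $X'\to Z$ with $Z$ not of general type, but extracting a contradiction from this via the Ueno fibration of $Z$ is not straightforward, and your sketch does not indicate how to finish. The paper bypasses this entirely by invoking \cite{ch2}, Theorem 4.2: that result says directly that if $q(X)>n$ and $\chi(X,\omega_X)=0$, then $X$ already carries a fibration whose general fiber $F$ has $\chi(F,\omega_F)=0$, contradicting the hypothesis.

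For (c), there are two gaps. First, your use of Proposition~\ref{dim1} does not do what you want: that proposition rules out $1$-dimensional \emph{components} of $V_0$, but a $1$-dimensional component of $V_{n-1}$ may well sit inside a higher-dimensional component of $V_0$, so you cannot conclude $V_{n-1}$ is finite this way. (This step is in fact unnecessary: any nonzero point of $V_{n-1}$ is torsion by \ref{212}, and that is all you need to build the cover.) The real problem is your claim that the hypothesis of the lemma transfers from $X$ to $X_\tau$ because ``any fibration of $X_\tau$ descends under the Galois group to a fibration of $X$.'' This is false in general: a fibration on a cyclic \'etale cover need not be Galois-invariant, and averaging over the group action changes the fibers in an uncontrolled way. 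The paper handles exactly this descent issue: applying \cite{ch2}, Theorem 4.2, to $\widetilde X$ produces a fibration whose fibers map, under the Albanese morphism, to translates of a \emph{fixed} abelian subvariety $\widetilde K\subset\Alb(\widetilde X)$. Pushing $\widetilde K$ down to $K\subset\Alb(X)$ gives a quotient map $\Alb(X)\to\Alb(X)/K$, and one checks that $\pi(F)$ is a general fiber of the Stein factorization of $X\to\Alb(X)/K$; since $F\to\pi(F)$ is generically finite \'etale, $\chi(\pi(F),\omega_{\pi(F)})=0$, contradicting the hypothesis on $X$. The geometric input from \cite{ch2} (the abelian subvariety $\widetilde K$) is precisely what makes the descent work, and your argument lacks a substitute for it.
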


\begin{proof}Item a) follows from   \cite{ch2}, Theorem 4.2,   and item b) from \cite{hp}, Theorem 2.4. Let us prove c).

  If $V_{n-1}(\omega_X,a_X)\moins\{0\}  $ is non-empty, it contains a   torsion point  by \ref{212}, which defines a connected \'{e}tale cover $\pi: \widetilde{X}\tto X$ such that $q(\widetilde{X})>q(X)=n$. By \cite{ch2}, Theorem 4.2, again,  there exists a non-constant fibration $ \widetilde{X}\tto Y$ with   general fiber $F$ of maximal Albanese dimension, of general type, with $\chi(F, \omega_F)=0$, such that $a_{\widetilde{X}}(F)$ is a translate of a fixed abelian subvariety $\widetilde  K$ of $\Alb(\widetilde{X})$ and $\dim (\widetilde  K)=\dim (F)<\dim (X)$. We consider the image $K$ of $\widetilde K$ by the  induced map $\Alb(\pi):\Alb(\widetilde X)\tto \Alb(X)$ and   the   commutative diagram:
$$
\xymatrix@C=40pt@M=6pt{
F\ar@{->>}[r]^{\pi\vert_F}\ar@{_{(}->}[d]&\pi(F)\ar@{_{(}->}[d]\ar@{->>}[r]^-{ a_{\widetilde{X}}\vert_{\pi(F)}}&  K +x_F \ar@{_{(}->}[d]\\
\widetilde{X}\ar@{->>}[d]_{f}\ar@{->>}[r]^{\pi}& X\ar@{->>}[r]^-{a_X}\ar@{->>}[dr]_-h& \Alb(X)\ar@{->>}[d]\\
Y&& \Alb(X)/ K .}
$$
The map $a_{\widetilde{X}}\vert_{\pi(F)}$ is generically finite, hence  $\dim (K)=\dim\pi(F)=\dim (F)$, and 
$$\dim (\Alb(X)/ K ) =n-\dim(K)=n-\dim(F).$$
It follows that $\pi(F)$ is a general fiber of the Stein factorization of $h$. Since 
 $\chi(\pi(F), \omega_{\pi(F)})=0$, this contradicts our hypothesis on $X$.
\end{proof}

 Assume now that we are in the situation of Proposition \ref{l35}.b) and consider, as in Remark \ref{r33}, the 
  maps $f_i:=p_i\circ f:X\tto A_i$. Since $ \chi(X,\omega_X) =0$, we are in  case a) of that remark,  hence, with the notation therefrom, $V_0(\omega_Z,m)$ must be finite, because $A_i$ is simple.
 If $f$ is minimal, so is $f_i$ and it    follows from
  \ref{222}  that $Z$ is birational to $A_i$, hence $f_i $ is a fibration. A general fiber $F_i$ satisfies $ \chi(F_i,\omega_{F_i})>0$ by Proposition \ref{surj}.b).

  \begin{prop}\label{l46} Let $X$ be a smooth  projective variety of general type  with $\chi(X,\omega_X)=0 $ and a minimal  generically finite morphism  $f:X\to A$  to an  \av\ $A$ product    of  three simple factors  $A_1$, $ A_2$, and $ A_3$.
  
  Let $\{i,j,k\}=\{1,2,3\}$.
For $\xi_j$ and $\xi_k$ general  torsion points   in $ \PA_j$ and $ \PA_k$ respectively,   the sheaf  $f_{i*}(\omega_X\otimes f_j^*P_{\xi_j}\otimes f_k^*P_{\xi_k})$ on $A_i$ is locally free, homogeneous, of positive rank $ \chi(F_i,\omega_{F_i})$.
 \end{prop}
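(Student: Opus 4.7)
The plan is to compute the rank and the cohomological support loci of $\cE$, and then invoke Fourier--Mukai theory on the simple abelian variety $A_i$.

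As noted in the paragraph preceding the statement, minimality of $f$ combined with simplicity of each $A_l$ forces $f_i$ to be a fibration whose general fiber $F_i$ is smooth, of general type, of maximal Albanese dimension, and satisfies $\chi(F_i,\omega_{F_i})>0$ by Proposition \ref{surj}.b). Since $f$ is generically finite, the restriction $(f_j,f_k)\vert_{F_i}\colon F_i\to A_j\times A_k$ is generically finite as well. Applying \ref{221} and \ref{215} to this restriction, the positivity of $\chi(F_i,\omega_{F_i})$ forces $V_0(\omega_{F_i},(f_j,f_k)\vert_{F_i})=\widehat{A_j\times A_k}$, so that for generic torsion $\xi_j\in\PA_j$, $\xi_k\in\PA_k$ one has $h^0(F_i,\omega_{F_i}\otimes P_{\xi_j}\vert_{F_i}\otimes P_{\xi_k}\vert_{F_i})=\chi(F_i,\omega_{F_i})$. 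Cohomology-and-base-change then yields the claimed generic rank of $\cE$.

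For local freeness and homogeneity I would pin down the cohomological loci of $\cE$. The projection formula gives
$$H^0(A_i,\cE\otimes P_\xi)=H^0\bigl(X,\omega_X\otimes f^*(P_\xi\otimes P_{\xi_j}\otimes P_{\xi_k})\bigr),$$
so
$$V_0(\cE)=\{\xi\in\PA_i\mid (\xi,\xi_j,\xi_k)\in V_0(\omega_X,f)\}.$$
Each of the three components of $V_0(\omega_X,f)$ listed in Proposition \ref{l35}.b) meets the slice $\PA_i\times\{\xi_j\}\times\{\xi_k\}$ in at most the single point $\{0\}$ once $(\xi_j,\xi_k)$ is generic. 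Any further irreducible component $W$ of $V_0(\omega_X,f)$ is a torsion translate of an abelian subvariety of $\PA$ (by \ref{212}), and the simplicity of $A_j,A_k$ severely restricts the image of its projection to $\PA_j\times\PA_k$: either this image is proper (in which case the slice is empty for general $(\xi_j,\xi_k)$), or the projection is surjective and the slice is a translate of an abelian subvariety of $\PA_i$. Combining Theorem \ref{q1}, Proposition \ref{dim1}, and the chain \ref{213} with a dimension count, I would argue that in the surjective case $W$ has dimension $n_j+n_k$, forcing the slice to be a single point. Hence $V_0(\cE)$ is a finite set of torsion points.

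Finally, $\cE$ is a GV sheaf by \cite{hp}, Theorem 2.2, so $\codim V_m(\cE)\ge m$ for $m>0$; a parallel slice analysis (together with the Leray spectral sequence and Kollár's results on higher direct images of $\omega_X$) forces every $V_m(\cE)$ to be finite. Via Fourier--Mukai duality, a GV sheaf on an abelian variety whose cohomological loci are all zero-dimensional is isomorphic to a direct sum $\bigoplus_k\cO_{A_i}^{\oplus r_k}\otimes P_{\eta_k}$ with $\eta_k$ torsion; such a sheaf is locally free and homogeneous, with total rank $\sum_k r_k=\chi(F_i,\omega_{F_i})$.

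The main obstacle is the analysis of the possible ``extra'' irreducible components of $V_0(\omega_X,f)$ in the second step: one must systematically rule out components of small codimension whose projection to $\PA_j\times\PA_k$ is surjective and whose fibers are positive-dimensional, by exploiting the rigid structural constraints already available (Theorem \ref{q1}, Propositions \ref{l35} and \ref{dim1}) together with the genericity of $(\xi_j,\xi_k)$. A secondary, more formal, point is to cite the correct Fourier--Mukai characterization that promotes the finiteness of all cohomological loci of a GV sheaf to the decomposition as a sum of translates of structure sheaves.
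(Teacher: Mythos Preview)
Your overall architecture matches the paper's: show that all cohomological loci $V_m(\cE)$ are finite, then invoke Mukai. But two of your steps are either incomplete or incorrect, and both are resolved much more simply than you suggest.

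\textbf{Finiteness of $V_0(\cE)$.} You correctly note that for an ``extra'' component $\tau+B$ of $V_0(\omega_X,f)$ whose projection to $\PA_j\times\PA_k$ is surjective, the slice over $(\xi_j,\xi_k)$ is a translate of an abelian subvariety of $\PA_i$. At this point you try to bound $\dim B$ via Theorem~\ref{q1} and Proposition~\ref{dim1}, and you flag this as the ``main obstacle.'' It is not: you have forgotten to use the simplicity of $A_i$ (hence of $\PA_i$). Any abelian subvariety of $\PA_i$ is either $\{0\}$ or $\PA_i$; in the latter case $\PA_i\subset B$ and surjectivity of the projection forces $B=\PA$, contradicting $\chi(X,\omega_X)=0$. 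So the slice is automatically finite. This is exactly the paper's argument: the intersection $(\xi_j+\xi_k+\PA_i)\cap V_0(\omega_X,f)$ is finite for general $\xi_j,\xi_k$ simply by \ref{212} and the simplicity of $A_i$. No structural results on extra components are needed.

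\textbf{Finiteness of the higher $V_m(\cE)$.} Your ``parallel slice analysis with Leray and Koll\'ar'' is unnecessary. Since $\xi_j,\xi_k$ are torsion, \cite{hp}, Theorem~2.2, gives not just the GV inequality but the full chain $V_m(\cE)\subset V_{m-1}(\cE)\subset\cdots\subset V_0(\cE)$ (this is how it is used already in the proof of Proposition~\ref{l35}). Finiteness of $V_0(\cE)$ therefore immediately gives finiteness of all $V_m(\cE)$.

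\textbf{The Fourier--Mukai conclusion.} Your claimed decomposition $\cE\simeq\bigoplus_k\cO_{A_i}^{\oplus r_k}\otimes P_{\eta_k}$ is too strong and in general false: a homogeneous vector bundle on an abelian variety is a direct sum of \emph{unipotent} bundles twisted by torsion line bundles, not of trivial bundles (see the Remark following the proposition; the splitting into line bundles holds only when $A_i$ is an elliptic curve). The correct reference is \cite{muk2}, Example~3.2: a coherent sheaf with all $V_m$ finite is locally free and homogeneous. That is all the proposition asserts.
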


 \begin{proof}We follow the proof of \cite{ch2}, Corollary 2.3. As in the proof of Proposition \ref{l35}, since $\xi_j$ and $\xi_k$ are torsion,  the cohomological loci of $\cE:=f_{i*}(\omega_X\otimes f_j^*P_{\xi_j}\otimes f_k^*P_{\xi_k})$ satisfy the chain of inclusions \ref{213}. On the other hand, since $\xi_j$ and $\xi_k$ are
 general and $A_i$ is simple, the intersection
 $$  (\xi_j+\xi_k+\PA_i)\cap V_0(\omega_X,f)  $$
 is finite by \ref{212}, hence so is  $V_0(\cE)$. It follows that all $V_l(\cE)$ are finite, hence $ \cE $ is locally free and homogeneous (\cite{muk2}, Example 3.2). Its rank is
$h^0(F_i, \omega_{F_i}\otimes f_j^*P_{\xi_j}\otimes f_k^*P_{\xi_k})=\chi(F_i,\omega_{F_i})
 $.
 \end{proof}

 \begin{rema}
 Recall that a homogeneous vector bundle is a direct sum of twists of unipotent vector bundles (successive extensions of trivial line bundles) by algebraically trivial line bundles which, in our case, are torsion by Simpson's theorem (or rather its extension \cite{hp}, Theorem 2.2.b)).

   {\em When $A_i$ is an elliptic curve,} the   sheaf   of Proposition \ref{l46} is actually a direct sum of torsion   line bundles (this is
 explained at the bottom of page 362 of \cite{ko3} when $\xi_j=\xi_k=0$ and holds in general by the \'etale covering trick).
 \end{rema}

 Finally, from the proof of Theorem \ref{q1}, we have (after replacing $X$ with a suitable modification), for each $\{i,j,k\}=\{1,2,3\}$, Stein factorizations
 $$p_{jk}\circ f:X\stackrel{g_i}{\llraa} S_i\stackrel{(h_{ij}, h_{ik})}{\lllraa} A_j\times A_k,$$
 where  $S_i$
is smooth of general type with $\chi(S_i,\omega_{S_i})>0$ (this follows also from Corollary \ref{two}.b)). Since $f_j$ has connected fibers, so does $h_{ij}:S_i\tto A_j$.

All in all, we have for each $\{i,j,k\}=\{1,2,3\}$ a commutative diagram:
\begin{equation}\label{d4}
\xymatrix@C=40pt{
&&X\ar@{->>}[dl]^{g_i}\ar@{->>}@/_1.5pc/[ddll]_{f_j}\ar@{->>}[dr]_{g_j}\ar@{->>}@/^1.5pc/[ddrr]^{f_i}\ar@{->>}[dd]_{f_k} \\
&S_i\ar@{->>}[dr]_{h_{ik}}\ar@{->>}[dl]^{h_{ij}}&&S_j\ar@{->>}[dl]^{h_{jk}} \ar@{->>}[dr]_{h_{ji}}\\
A_j&&A_k &&
A_i, }
\end{equation}
where all the morphisms are fibrations.

\begin{qu}
Are the $h_{ij}$   isotrivial? Are the $f_i$   isotrivial? Is $X$ rationally dominated by a product  $X_1\times X_2\times X_3$, where $X_i$ dominates and is  generically finite over $A_i$? We are inclined to think that the answers to all these questions should be affirmative, but we were only able to go further in the case where the $A_i$ are all elliptic curves.
\end{qu}

\section{The 3-dimensional case}\label{s3}

We now come to our main result, which completely describes all   smooth projective   threefolds $X$ of maximal Albanese dimension and  of general type, with $\chi(X,\omega_X)=0$. 

\begin{theo}\label{3cur}
Let   $X$ be a  smooth projective   threefold of maximal Albanese dimension and  of general type, with $\chi(X,\omega_X)=0$. 

There exist elliptic curves $E_1$, $ E_2$, and $ E_3$, double coverings $C_j\tto E_j$ with associated involutions $\iota_j$, and a commutative diagram
$$
\xymatrix@C=15pt
 {&(C_1\times C_2\times C_3)/\iota_1\times \iota_2\times\iota_3 \ar@{->>}[dr]\\
\widetilde X\ar@{->>}[rr]^{a_{\widetilde X}}\ar@{}[drr]|\square\ar@{->>}[ur]^-\eps\ar@{->>}[d]&& E_1\times E_2\times E_3\ar@{->>}[d]^\eta\\
X\ar@{->>}[rr]_{a_X} &&\Alb(X), }
$$
where $\eta$ is an isogeny and $\eps$ a desingularization.
\end{theo}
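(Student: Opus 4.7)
The plan is to first reduce to the case where the Albanese variety is isogenous to a product of three elliptic curves, then to erect the rigid diagram of Section \ref{3sc}, and finally to recognize the Ein--Lazarsfeld construction by analyzing the resulting fibrations.

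\textbf{Reducing to $q(X)=3$.} By Corollary \ref{two}.b), $A := \Alb(X)$ has at least three simple factors. I would first show $q(X)=3$, which then forces $A$ to be isogenous to $E_1\times E_2\times E_3$ with each $E_j$ elliptic. Suppose $q(X)>3$. Then \cite{ch2}, Theorem 4.2 (invoked in the proof of Lemma \ref{le46}) produces a non-constant fibration $X\dra Y$ whose general fiber $F$ is of maximal Albanese dimension and of general type, with $\chi(F,\omega_F)=0$ and $\dim F<3$. If $\dim F=1$, then $F$ is a curve of genus $\ge 2$ with $\chi(F,\omega_F)=g(F)-1>0$, a contradiction. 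If $\dim F=2$, Corollary \ref{two}.b) applied to $F$ gives $q(F)\ge 3>\dim F$, so applying \cite{ch2}, Theorem 4.2 to $F$ yields a fibration $F\dra Z$ whose general fiber is a curve of genus $\ge 2$ with vanishing Euler characteristic, again a contradiction. Hence $q(X)=3$.

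\textbf{Erecting the diagram.} By Proposition \ref{l35}.b), after replacing $X$ with an abelian \'etale cover (allowed by \S\ref{23}) I may assume $A=E_1\times E_2\times E_3$ and that the three surfaces $\PE_i\times\PE_j\subset\PA$ are components of $V_0(\omega_X,a_X)$. The entire apparatus of Section \ref{3sc} then applies: for each $\{i,j,k\}=\{1,2,3\}$ I obtain fibrations $f_i\colon X\tto E_i$, $g_i\colon X\tto S_i$, and $h_{ij}\colon S_i\tto E_j$ fitting into diagram (\ref{d4}), where each $S_i$ is a smooth surface of general type, of maximal Albanese dimension, with $\chi(S_i,\omega_{S_i})>0$ and a generically finite map to $E_j\times E_k$.

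\textbf{Identifying the Ein--Lazarsfeld structure.} The core of the argument is to show that, perhaps after yet another \'etale cover, each $S_i$ is birational to $(C_j\times C_k)/\iota_j\times\iota_k$ for double covers $C_\ell\to E_\ell$ that are consistent across the three diagrams, i.e., such that the $C_j$ arising in the analysis of $S_i$ and of $S_k$ coincide. For this I would use Proposition \ref{l46} and the remark following it, which express on each elliptic factor the pushforward $f_{i*}(\omega_X\otimes f_j^*P_{\xi_j}\otimes f_k^*P_{\xi_k})$ as a direct sum of torsion line bundles on $E_i$; comparing $h^0$-dimensions with the $V_0$-structure and using $\chi(S_i,\omega_{S_i})>0$ should force the number and nature of the non-trivial summands of $a_{X*}\omega_X$ on $A$ to match those in (\ref{aX}). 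This is the main obstacle: reading off each $S_i$ as a bielliptic double cover is essentially a computation on a single $E_i$, but reconciling the three analyses into one triple $C_1\times C_2\times C_3$ carrying a diagonal involution is a rigidity statement requiring one to pin down the same discriminant divisor on $E_\ell$ from two of the $S_i$ simultaneously.

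Once the consistent double covers $C_\ell\to E_\ell$ are in hand, the three factorizations $X\dra S_i\to E_j\times E_k$ glue to a birational map from $(C_1\times C_2\times C_3)/\iota_1\times\iota_2\times\iota_3$ onto the \'etale cover $\widetilde X$ of $X$, providing the desingularization $\eps$ and the isogeny $\eta$ in the stated commutative square.
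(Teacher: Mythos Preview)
Your setup through diagram (\ref{d4}) is correct and matches the paper's, and your reduction to $q(X)=3$ is fine. But the paragraph ``Identifying the Ein--Lazarsfeld structure'' is where the real content lies, and it is not a proof: you name the conclusion and the obstacle without supplying the mechanism. Two essential ingredients are missing.

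First, nothing you have said forces the fibrations $h_{ij}\colon S_i\tto E_j$ to be \emph{isotrivial}. Without isotriviality you cannot write $S_i$ birationally as any diagonal quotient $(C_{ij}\times C_{ik})/G_i$, let alone with $G_i$ of order 2. The paper obtains isotriviality by a substantial argument: semistable reduction over $E_j$, the inclusion of $h_{i*}(\omega_{S'_i/C})\otimes h_{k*}(\omega_{S'_k/C})$ into a pullback of the homogeneous bundle of Proposition \ref{l46}, Viehweg nefness to force degree~0, and then Grothendieck--Riemann--Roch to conclude $\deg h_{\alpha*}\omega_{S'_\alpha/C}=0$, hence local triviality. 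Proposition \ref{l46} by itself (a sum of torsion line bundles on a single $E_i$) does not see the fibration structure of $S_i$ over $E_j$; it is the degree comparison along the base curve that does the work.

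Second, even granting isotriviality, Serrano's lemma (Lemma \ref{l53}) only gives $S_i$ as $(C_{ij}\times C_{ik})/G_i$ for some finite group $G_i$ acting diagonally; there is no a~priori reason $G_i$ has order 2, nor that the curves $C_{ij}$ and $C_{kj}$ over $E_j$ coincide. The paper handles this in two further steps: it first shows (Step 4) that $V_0(\omega_X,a_X)$ has \emph{no} components beyond the three obvious ones---this uses Lemma \ref{l53}.b) to rule out an extra isotrivial projection of $S_3$---and then (Step 6) runs a representation-theoretic argument (Lemmas \ref{l55} and \ref{2elements}) on the Galois closure $D_1\tto E_1$ to force $G_j\cong\Z/2\Z$ and $N_2=N_3$. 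Your proposed route, matching summands of $a_{X*}\omega_X$ against (\ref{aX}), presupposes exactly what these steps establish. Finally, you also need the birationality $X\isomdra Y$ (the paper's Step 5, via Lemma \ref{3-fm}), without which you only get a dominant map from a product quotient onto something $X$ dominates, not onto $X$ itself.
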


In other words, up to abelian  \'etale covers, the Ein-Lazarsfeld examples (Example \ref{eel}) are the only ones (in dimension 3)! Note also that $a_X$ is not   finite, but that it is finite on the  canonical model of $X$.

\begin{coro}\label{c52}
Under the hypotheses of the theorem, the Albanese mapping of $X$ is birationally a $(\Z/2\Z)^2$-covering.
\end{coro}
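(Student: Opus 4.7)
The degree of $a_X$ equals that of its base change $a_{\widetilde X}$, which in turn equals $\deg(Z\to\widetilde A)=4$ by Example \ref{eel}; hence $\deg(a_X)=4$. To promote this to a birational $(\Z/2\Z)^2$-Galois structure, the plan is to descend the tautological Galois action of $G:=(\Z/2\Z)^2$ on $\widetilde X$ over $\widetilde A:=E_1\times E_2\times E_3$ (inherited from the birational identification $\widetilde X\sim Z$) along the cover $\widetilde X\to X$, producing a faithful $G$-action on $X$ over $A$ with quotient birational to $A$.

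The \'etale cover $\widetilde X\to X$ is, by the cartesian square, Galois with group $H:=\ker(\eta)$, acting on $\widetilde X=X\times_A\widetilde A$ by translation on the second factor. By Galois descent along this $H$-cover, an element of $\Aut(\widetilde X/\widetilde A)$ descends to $\Aut(X/A)$ precisely when it commutes with $H$. The plan therefore reduces to proving $[G,H]=1$ in the group of birational automorphisms of $\widetilde X$.

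I transport this problem to $C:=C_1\times C_2\times C_3$, on which $G$ is represented, modulo $\iota_1\times\iota_2\times\iota_3$, by the componentwise involutions $(\iota_1,1,1),(1,\iota_2,1),(1,1,\iota_3)$, and $H$ acts through componentwise lifts to each $C_j$ of the corresponding translations of the $E_j$ (this lifting being forced by the existence of the cartesian square together with the birational identification $\widetilde X\sim Z$). The elementary key lemma is: on the double cover $\rho_j:C_j\to E_j$ with involution $\iota_j$, every lift $\tilde t$ to $C_j$ of a translation of $E_j$ commutes with $\iota_j$. Indeed, $\iota_j\tilde t\iota_j^{-1}$ is another lift of the same translation, so it equals $\tilde t$ or $\iota_j\tilde t$; the latter case would force $\iota_j=\id$, absurd. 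Applying this factor by factor yields $[G,H]=1$ on $C$, hence on $Z$ and on $\widetilde X$.

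The main technical subtlety I foresee is the rigorous justification of the componentwise lifting of each element of $H$ to the $C_j$, which amounts to the $H$-invariance of the line bundles defining the covers pulled back to $\widetilde A$; this is visible in the constructions of Examples \ref{eel} and \ref{chh} and must hold in general by the specific form of the commutative diagram produced in Theorem \ref{3cur}. Once the commutativity $[G,H]=1$ is in hand, $G$ descends to $X$, and the quotient $\widetilde X/(G\times H)$ is birational to $\widetilde A/H=A$; therefore $X/G$ is birational to $A$, and $a_X$ is birationally a $(\Z/2\Z)^2$-Galois covering, as claimed.
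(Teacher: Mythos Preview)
Your Galois-descent framing—show that the deck group $G\simeq(\Z/2\Z)^2$ of $a_{\widetilde X}$ commutes with $H=\Ker(\eta)$ inside the birational automorphism group of $\widetilde X$, hence descends to $X$—is correct and equivalent to what must be shown, but your route to $[G,H]=1$ differs from the paper's. The paper argues directly on the pushforward: since the $H$-action lifts to $\widetilde X$, it preserves the algebra $a_{\widetilde X*}\cO_{\widetilde X}\isom\cO_A\oplus(L_1^\vee\otimes L_2^\vee)\oplus(L_1^\vee\otimes L_3^\vee)\oplus(L_2^\vee\otimes L_3^\vee)$; because translation fixes numerical classes and the three nontrivial summands have pairwise distinct numerical classes (each pulled back from a different factor $E_j\times E_k$), every summand is individually $H$-stable. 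The three intermediate double covers then descend to $\Alb(X)$, giving the $(\Z/2\Z)^2$-factorization of $a_X$ at once. In your language this is exactly $[G,H]=1$: conjugation by $H$ can only permute the three nontrivial characters of $G$, but the corresponding eigensheaves are numerically distinguishable, so the permutation is trivial.

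Your argument instead lifts $H$ all the way to $C=C_1\times C_2\times C_3$ and then invokes an elementary commutation lemma on each $C_j$. The lemma is correct, but the lifting step you flag as the ``main technical subtlety'' is genuinely where the content resides, and it needs slightly more than $H$-invariance of the line bundles $\delta_j$: one also needs each branch divisor of $\rho_j$ to be $t_{h_j}$-invariant, since a double cover is determined by the pair $(\delta_j,\,s_j\in H^0(\delta_j^{\otimes 2}))$ and not by $\delta_j$ alone. This does follow—the multiplication on $a_{\widetilde X*}\cO_{\widetilde X}$, which $H$ preserves, is encoded precisely by the sections $p_j^*s_j$—but extracting it is essentially the paper's sheaf analysis. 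So your approach works, at the cost of a detour through $C$ that the paper's numerical-class observation avoids.
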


\begin{proof}With the notation of Example \ref{eel}, we set $  A:= E_1\times E_2\times E_3$ and
 $\widetilde X_i:=\Spec (\cO_A\oplus L^\vee_i)$, so that $f$ factors through the double coverings $\widetilde f_i:=\widetilde X_i\tto   A$. Since the action of $ \Ker(\eta)$ on $  A$ by translations lifts to $\widetilde X$, it leaves 
$$a_{\widetilde  X*}\cO_{\widetilde  X}\isom \cO_A\oplus (L^\vee_1\otimes L^\vee_2)\oplus (L^\vee_3\otimes L^\vee_1)\oplus (L^\vee_2\otimes L^\vee_3) 
$$
invariant, hence also each $L^\vee_i$. It follows that this action lifts to each $\widetilde X_i$, hence $\widetilde f_i$ descends to a double covering $X_i\tto \Alb(X)$ through which $a_X$ factors.
\end{proof}

\begin{proof}[Proof of the theorem]
 By Proposition \ref{surj}.a), $a_X$ is surjective and $q(X)=3$ (see also \cite{ch2}, Corollary 4.3).

Moreover, by Corollary \ref{two}.b),  $\Alb(X)$ is isogeneous to the product of three elliptic curves and, after passing to   \'etale covers, we may assume that   $a_X$ can be written as
 $$a_X: X\stackrel{(f_1,f_2,f_3)}{\lllraa} E_1\times E_2\times E_3,$$
 where each
$f_i: X\tto E_i$  is a fibration, 
and that
$\PE_1\times \PE_2\times \{0\}$,
 $\PE_1\times
\{0\}\times \PE_3$, and $ \{0\}\times \PE_2\times \PE_3$ are irreducible
components of $V_0(\omega_X,a_X)$  (Proposition \ref{l35}.b)).

Let $\{i,j,k\}=\{1,2,3\}$. As in \S\ref{3sc}, we have  a commutative diagram (\ref{d4}), where each $S_i$ is a  smooth
minimal surface  of general type and $A_i=E_i$. The proof of the theorem is very long, so we will divide it in several steps. The general scheme of proof goes as follows:
\begin{itemize}
\item In the diagram (\ref{d4}), the fibrations $h_{ij}:S_i\tto E_j$  are all isotrivial (Step 1); we let $C_{ij}$ be a (constant) general fiber.
\item There exist   finite groups $G_i$ acting on $C_{ij}$   such that $C_{ij}/G_i\isom E_j$ and $C_{ik}/G_i\isom E_k$, the surface $S_i$ is birational to  $(C_{ij}\times C_{ik})/G_i$, and $h_{ij}$ and $h_{ik}$ are the two projections (Step 2).
\item At this point, it is quite easy to show that $X$ dominates a threefold $Y$ which is dominated by a product of 3 curves (Step 3). 
\item Taking an \'etale cover of $X$, we may assume   that all the irreducible components of $V_0(\omega_X, a_X)$ pass through $0$. We then show  (Step 4) that $V_0(\omega_X, a_X)$  has the same form as the corresponding locus of an Ein-Lazarsfeld threefold (see (\ref{aX})), from which we deduce that $X$ is birationally isomorphic to $Y$ (Step 5) hence is also dominated by a product of 3 curves.
\item Using the fact that $V_0(\omega_X, a_X)$ has no ``extra'' components, we finish the proof by showing that the groups $G_i$ all have order 2 (Step 6).
\end{itemize}

\noindent{\bf Step 1.}
{\em The fibrations $h_{ij}:S_i\tto E_j$  are all isotrivial.}

We will denote by $C_{ij}$ a general (constant) fiber of $h_{ij}$.

\begin{proof}
By the semi-stable reduction theorem (\cite{KKMS}, Chapter II),
there exist a finite cover $h: C\tto E_j$,  where $C$ is a
smooth curve and    commutative
diagrams (for each $\alpha\in\{i,k\}$)
$$
\xymatrix{S'_\alpha\ar@{->>}[dr]_{h_\alpha}\ar@{->>}[r]^-{\eps_\alpha}
& C\times_{E_j}S_\alpha\ar@{->>}[d]\ar@{->>}[r] &S_\alpha\ar@{->>}[d]^{h_{\alpha  j}}\\
& C\ar@{->>}[r]^{h} &E_j, }
$$
where    $\eps_\alpha$ is a   modification and the
fibers of $h_\alpha$ are all  reduced connected curves, with non-singular components
crossing transversally.

We also make a   modification $\tau : X'\to
C\times_{E_j}X$ such that there exists a commutative diagram of
morphisms between smooth   varieties:
$$
\xymatrix@C=40pt{
&&X' \ar@{->>}[dl]^{g'_i}\ar@{->>}@/_1.5pc/[ddll]_{f'_k}\ar@{->>}[dr]_{g'_k}\ar@{->>}[dd]_(0.6){f'}\ar@{->>}@/^1.5pc/[ddrr]^{f'_i} \\
&S'_i\ar@{->>}[dl]^{h'_{ik}} \ar@{->>}[dr]_{h_i}&&S'_k\ar@{->>}[dl]^{h_k}\ar@{->>}[dr]_{h'_{ki}}\\
 E_k&&C&&E_i.}
$$

Let $\xi_\alpha\in \PE_\alpha$. By  \cite{V}, Lemma 3.1,\footnote{This is proved there for $\xi_i=\xi_k=0$, but the same proof works in general.}   we have an inclusion
\begin{equation}\label{4}
f'_*(\omega_{X'/C}\otimes {f'_i}^*P_{\xi_i}\otimes
{f'_k}^*P_{\xi_k})\subset h^*f_{j*}(\omega_X\otimes f_i^*P_{\xi_i}\otimes
f_k^*P_{\xi_k})
\end{equation}
of locally free sheaves on $C$.
Since $h_\alpha$ is flat with irreducible general fibers, $S'_i\times_C S'_k$ is irreducible, and
we have a surjective morphism
$$g'_{ik}: X'\stackrel{(g'_i, g'_k)}{\lllraa} S'_i\times_C S'_k.$$
Moreover, $h_i$ and $h_k$ are semistable, hence by
\cite{AK}, Proposition 6.4, $S'_i\times_CS'_k$ has only rational Gorenstein
singularities. After further modification of $X'$, we may assume that $g'_{ik}$ factors through a desingularization of $S'_i\times_CS'_k$: 
$$g'_{ik}: X'\tto Y'_{ik}\stackrel{\eps}{\tto} S'_i\times_CS'_k.$$
 By definition of rational Gorenstein singularities, we have $\eps_*\omega_{Y'_{ik}}=\omega_{S'_i\times_CS'_k}$. Since $\omega_{X'/Y'_{ik}}$ is effective, we obtain an inclusion
$$
p_i^*(\omega_{S'_i/C}\otimes {h'_{ik}}^*P_{\xi_k})\otimes p_k^*(\omega_{S'_k/C}\otimes {h'_{ki}}^*P_{\xi_i})\subset
{g'_{ik*}}(\omega_{X'/C}\otimes {f'_i}^*P_{\xi_i}\otimes
{f'_k}^*P_{\xi_k}) 
$$
of sheaves on $S'_i\times_CS'_k $. 
  Pushing forward these sheaves   to
$C$, we obtain
\begin{eqnarray}\label{nn}
h_{i*}(\omega_{S'_i/C}\otimes
{h'_{ik}}^*P_{\xi_k})\otimes h_{k*}(\omega_{S'_k/C}\otimes
{h'_{ki}}^*P_{\xi_i})
&\subset& f'_*(\omega_{X'/C}\otimes
{f'_i}^*P_{\xi_i}\otimes {f'_k}^*P_{\xi_k}) \nonumber\\
&\subset& h^*f_{j*}(\omega_X\otimes
f_i^*P_{\xi_i}\otimes f_k^*P_{\xi_k}),
\end{eqnarray}
where the second
inclusion comes from (\ref{4}). Let $\{\alpha,\beta\}=\{i,k\}$.   Both sheaves $h_{\alpha*}(\omega_{S'_ \alpha/C}\otimes
{h'_{\alpha \beta}}^*P_{\xi_\beta})$   are nef (\cite{V}, Corollary 3.6). On the other hand, for
$\xi_i$ and $\xi_k$ general and torsion, the sheaf in (\ref{nn}) has degree 0 by Proposition \ref{l46}, hence
$$
\deg(h_{\alpha*}(\omega_{S'_ \alpha/C}\otimes
{h'_{\alpha \beta}}^*P_{\xi_\beta}))=0.
$$

By \cite{K1}, Corollary 10.15,
 both sheaves
$R^1h_{\alpha*}(\omega_{S'_ \alpha/C}\otimes
{h'_{\alpha \beta}}^*P_{\xi_\beta})$  are torsion-free
 and generically 0, hence 0.\footnote{Note that up to this point, the proof works in the more general situation where $a_X$ is surjective and $\Alb(X)$ has a 1-dimensional simple  factor $E_1$.}
 On the other hand, we have
$ 
R^1h_{\alpha*}\omega_{S'_\alpha/C} =\cO_C 
$ 
(\cite{ko1}, Proposition 7.6) hence, by the Grothendieck-Riemann-Roch theorem,  
$$
[\ch(h_{\alpha*}(\omega_{S'_\alpha/C}))-\ch(\cO_C)]\Td(C)=\ch(h_{\alpha*}(\omega_{S'_ \alpha/C}\otimes
{h'_{\alpha \beta}}^*P_{\xi_\beta}))\Td(C)
$$
 in the ring of cycles modulo numerical equivalence on $C$.
This implies $\deg(h_{\alpha*}\omega_{S'_\alpha/C}) =0$, and   $h_\alpha$ is locally trivial (see  \eg\
 \cite{BPV}, Theorem III.17.3). Hence $h_{\alpha j}$ is isotrivial (for each $\alpha\in\{i,k\}$).
 \end{proof}

\noindent{\bf Step 2.}
{\em There exist   finite groups $G_i$ acting on $C_{ij}$   such that $C_{ij}/G_i\isom E_j$ and $C_{ik}/G_i\isom E_k$, the surface $S_i$ is birational to the quotient $(C_{ij}\times C_{ik})/G_i$ for the diagonal action of $G_i$, and $h_{ij}$ and $h_{ik}$ are identified with the two projections. }

This is a consequence of the following (probably classical)  result.

\begin{lemm}\label{l53}
Let $S$ be a smooth projective surface with an isotrivial  fibration  $h_1:S\tto \Gamma_1$ onto an irrational curve  with   (constant) irrational  general fiber  $F_1$. 

{\rm a)} There exist a smooth curve $F_2$ and a finite group $H$ acting faithfully on $F_1$ and $F_2$ such that $\Gamma_1$ is isomorphic to $F_2/H$, the surface  $S$ is birationally isomorphic to the diagonal quotient $(F_1\times F_2)/H$, and $h_1$ is the composition $S\isomdra (F_1\times F_2)/H \tto F_2/H\isom \Gamma_1$. Let $h_2$ be the composition $S\isomdra (F_1\times F_2)/H \tto F_1/H $.

{\rm b)} Assume $S$ is of general type. Any isotrivial   fibration $h:S\tto \Gamma$ onto  an irrational curve  $\Gamma$ is either $h_1$ or $h_2$ followed by an isomorphism between $F_1/H$ or $ F_2/H$ with $\Gamma$. 
\end{lemm}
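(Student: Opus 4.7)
The plan is to prove (a) by the classical monodromy construction for isotrivial fibrations, and to deduce (b) from a rigidity statement about fibrations on a product of curves of genus $\ge 2$.

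For (a), I first restrict to the open $U\subset\Gamma_1$ over which $h_1$ is smooth. Isotriviality makes $h_1^{-1}(U)\to U$ a locally trivial analytic family with fibre $F_1$, so its monodromy representation $\pi_1(U)\to \Aut(F_1)$ has finite image $H$ (after a further finite base change absorbing any translational ambiguity in case $F_1$ is elliptic). Let $V\to U$ be the connected \'etale Galois cover killing this monodromy, with group $H$. Then the pullback family trivialises canonically as $F_1\times V\to V$, the diagonal action of $H$ on $F_1\times V$ recovers $h_1^{-1}(U)\to U$ by descent, and the smooth projective compactification $F_2\supset V$ satisfies $F_2/H\isom\Gamma_1$. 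The quotient $(F_1\times F_2)/H$ has at worst cyclic quotient (hence rational) singularities, and any desingularisation produces the stated birational isomorphism $S\simeq(F_1\times F_2)/H$, with $h_1$ identified with the descent of the projection to $F_2$. The descent of the first projection then yields the second fibration $h_2:S\dra F_1/H$.

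For (b), the hypothesis that $S$ is of general type forces $F_1$ and $F_2$ to both have genus $\ge 2$, since otherwise $F_1\times F_2$, and hence its quotient $(F_1\times F_2)/H$, would not be of general type. Given any isotrivial fibration $h:S\tto\Gamma$ onto an irrational curve, I pull back along $F_1\times F_2\to S$ and take Stein factorisation to obtain an $H$-equivariant fibration $\widetilde h: F_1\times F_2\tto\widetilde\Gamma$ onto a smooth irrational curve. The central claim is that $\widetilde h$ must factor through one of the two projections $p_1$ or $p_2$; granting this, the $H$-equivariance identifies $h$ with $h_1$ or $h_2$ composed with an isomorphism of bases.

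When $\widetilde\Gamma$ has genus $\ge 2$, the factorisation follows from de Franchis's theorem: for generic $y\in F_2$ the restricted map $F_1\times\{y\}\to\widetilde\Gamma$ lies in the finite set $\mathrm{Mor}(F_1,\widetilde\Gamma)$ of non-constant morphisms, so by the rigidity of holomorphic maps into a discrete set it is locally constant (hence constant) in $y$, forcing $\widetilde h$ to factor through $p_1$. The main obstacle is the case $\widetilde\Gamma$ elliptic, where de Franchis fails: here the Albanese structure gives $\widetilde h(x,y)=g_1(x)+g_2(y)$ for morphisms $g_i:F_i\to\widetilde\Gamma$, and I would combine $H$-invariance with isotriviality of $h$ and the connectedness of the general fibre of $\widetilde h$ to force one of $g_1,g_2$ to be constant, again reducing to the factorisation through a projection.
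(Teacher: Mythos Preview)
Your treatment of part (a) via the monodromy construction is fine; the paper simply cites Serrano's \cite{Ser} for this standard fact, and your argument is essentially the one found there.

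For part (b), your reduction to the lifted fibration $\widetilde h:F_1\times F_2\tto\widetilde\Gamma$ is the same as the paper's, and your de Franchis argument when $g(\widetilde\Gamma)\ge 2$ is correct and pleasantly direct. The problem is the elliptic case, which you yourself flag as ``the main obstacle'' and then do not actually resolve: you only say you ``would combine'' $H$-invariance, isotriviality, and connectedness of fibres to force one of $g_1,g_2$ to be constant. This is a genuine gap. In fact isotriviality gives little traction here: the fibre of $g_1+g_2$ over $e$ is the fibre product $F_1\times_{\widetilde\Gamma}F_2$ for the maps $g_1$ and $t_e\circ(-g_2)$, and as $e$ varies one is composing the second map with a varying automorphism of $\widetilde\Gamma$, so there is no evident a priori reason for constancy of moduli to force $g_1$ or $g_2$ constant. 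Neither does finite $H$-equivariance visibly cut this down.

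The paper sidesteps the elliptic case entirely by a neat trick: instead of analysing $\widetilde h$ directly, it applies part (a) \emph{a second time}, now to the isotrivial fibration $g:F_1\times F_2\tto D_1$ (the Stein factorisation of $h'\circ\pi$, whose general fibre $D_2'$ has genus $\ge 2$ since $S$ is of general type). This produces a dominant map $t=(t_1,t_2):D_1'\times D_2'\tto F_1\times F_2$. Now rigidity is applied to each $t_i:D_1'\times D_2'\to F_i$, and the \emph{targets} $F_i$ have genus $\ge 2$ regardless of $g(\widetilde\Gamma)$, so de Franchis forces each $t_i$ to factor through a projection. If $h$ factored through neither $h_1$ nor $h_2$, then $D_2'$ (a general fibre of $g$) would dominate both $F_1$ and $F_2$ under the projections; hence neither $t_i$ could factor through $p_1:D_1'\times D_2'\to D_1'$, so both factor through $p_2$, contradicting surjectivity of $t$.

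In short: your argument is incomplete precisely where you say it is. The fix is not to push harder on $g_1+g_2$, but to turn the problem around and apply (a) again so that rigidity is used with targets $F_1,F_2$ of genus $\ge 2$ rather than with target $\widetilde\Gamma$.
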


\begin{proof} Item a) is well-known and can be found in
  \cite{Ser}. Let us prove b). Since $\Gamma$ is irrational, $h$ induces an isotrivial fibration $h':(F_1\times F_2)/H \tto \Gamma$. Let $D_2$ be a general (constant irrational) fiber of $h'$.
  The quotient map $\pi: F_1\times F_2\tto (F_1\times F_2)/H$ is \'{e}tale outside a finite set. Hence the Stein factorization $g$ of $h'\circ \pi$ in the diagram
$$
\xymatrix{F_1\times F_2\ar@{->>}[dr]_g\ar@{->>}[r]^-{\pi} &(F_1\times F_2)/H\ar@{->>}[r]^-{h'} & \Gamma\\
&D_1,\ar@{->>}[ur]}
$$
 is also isotrivial, with general fiber $D_2'$   a (fixed) \'{e}tale cover of $D_2$. By a), there is a   base change $D_1'\tto D_1$ and a surjective morphism $t=(t_1, t_2): D_1'\times D_2'\tto F_1\times F_2$. Since $S$ is of general type, $F_1$ and $F_2$ are each of genus $\ge 2$, hence each
 $t_i$ must factor through one of the projections $p_j:D_1'\times D_2'\tto D'_j$.

  If   $h$  factors through  neither $h_1$ nor $h_2$, the curve  $D_2'$ dominates both $F_1$ and $F_2$, hence $t_1$ and $t_2$ cannot factor through $p_1$. Thus they must   factor through $p_2$, which  contradicts the fact that $t$ is surjective.
\end{proof}

Let now   $Y_j$ be a resolution of singularities of the irreducible threefold $S_i\times_{E_j} S_k$ and let 
$Y$ be a resolution of singularities of   the   component of   $Y_1\times_{E_2\times E_3 }S_1$ that dominates both $Y_1$ and $Y_3$.
 After modification of $X$, we obtain a diagram
 \begin{equation}\label{bigd}
\xymatrix@R=10pt{
&X\ar@{->>}[rr]^{g}&&Y\ar@{->>}[dr] \ar@{->>}[dl]\\
&&Y_1\ar@{->>}[dl]_{g_{13}}\ar@{->>}[dr]^{g_{12}}&&Y_3\ar@{->>}[dl]_{g_{32}}\ar@{->>}[dr]^{g_{31}}\\
&S_3\ar@{->>}[dr]_{h_{31}}\ar@{->>}[dl]^{h_{32}}&\square&S_2\ar@{->>}[dl]^{h_{21}} \ar@{->>}[dr]_{h_{23}}& \square&S_1\ar@{->>}[dr]_{h_{13}} \ar@{->>}[dl]^{h_{12}}\\
E_2&&E_1 &&
E_3&&E_2, }
\end{equation}
where the squares are birationally cartesian and the isotrivial  morphisms  $h_{ij}:S_i\tto E_j $ fit  into  diagrams
 \begin{equation*}
\xymatrix
{
&C_{ik}\times C_{ij}\ar[r]^-{p_2}\ar@{-->}[d]^{/G_i}&C_{ij}\ar@{->>}[d]^{/G_i}_{\rho_{ij}}\\
C_{ik}\ar@{^{(}->}[r]^{\rm fiber}&S_i\ar@{->>}[r]_{h_{ij}}&E_j.}
\end{equation*}

\medskip
\noindent{\bf Step 3.}
{\em The threefold $Y$ is dominated by a product of three curves. }

The   dominant maps  $C_{31}\times C_{32}\dra S_3$ and $C_{21}\times C_{23}\dra S_2$ induce a factorization
 $$((\rho_{31}, \rho_{21}), \rho_{32}, \rho_{23}): (C_{31} \times_{E_1}C_{21} ) \times C_{32} \times C_{23}\dra S_3\times_{E_1} S_2\tto E_1\times E_2\times E_3.$$
The (Stein factorization of the)   morphism $Y_1\tto E_2\times E_3$ is therefore isotrivial (its fibers are dominated by   the curve $C_{31} \times_{E_1}C_{21} $).
Thus, $Y$  is dominated by the product 
 \begin{equation}\label{stu}
 (C_{31} \times_{E_1}C_{21})\times(C_{12} \times_{E_2}C_{32})\times (C_{23} \times_{E_3}C_{13}) 
  \end{equation}
 of three (possibily reducible) curves.
 
 \medskip
 
Going back to the proof of Theorem \ref{3cur}, after passing to an \'{e}tale cover,  we may and will assume, from now on,   that the following   holds (\S\ref{23}):
\begin{equation}\label{propp}
\hbox{\em All the irreducible components of $V_0(\omega_X, a_X)$ pass through $0$.}
 \end{equation}
One   checks, using \S\ref{21}, \S\ref{23}, and Proposition \ref{surj}.d), that   if 
 we want $X$ to be birationally covered by a product $\Gamma_1\times \Gamma_2\times \Gamma_3$, with morphisms $ \Gamma_i\tto E_i$, as in the conclusion of the theorem, we must
   have the following.

\medskip
\noindent{\bf Step 4.}
{\em We have
$$V_0(\omega_X, a_X)=(\PE_1\times \PE_2\times \{0\})\cup
 (\PE_1\times
\{0\}\times \PE_3)\cup( \{0\}\times \PE_2\times \PE_3).$$ }
  We already know that $V_0(\omega_X, a_X)$ contains the right-hand-side  (Proposition \ref{l35}.b)) and we must   prove that it has no other components.

\begin{proof}
 Assume $V_0(\omega_X, a_X)$ has  another component $\PT$. It has dimension 2 (Corollary \ref{two}.a)) and, after possibly permuting the indices,  we may assume the neutral component $\PE'_1$ of $\PT \cap (\PE_1\times\PE_2\times \{0\})$ is neither $\PE_1\times\{0\}\times\{0\}$ nor $\{0\}\times \PE_2\times\{0\}$.
 This yields an elliptic curve $E'_1$ which is a quotient of $E_1\times E_2$ which does not factor through either projection. As we saw right before Proposition \ref{l46},  the induced map $f_4: X\tto E'_1$ is a fibration. It factors as
 $$f_4: X  \lraa  S_3\stackrel{h_{34}}{\lraa} E'_1
 $$
 where, by Step 1, $h_{34}$ is isotrivial. By Lemma \ref{l53}.b), $h_{34}$ must factor through one of the projections $h_{31}:S_3\tto E_1$ or $h_{32}:S_3\tto E_2$ so we reach a contradiction.
  \end{proof}

\medskip
\noindent{\bf Step 5.}
{\em The morphism  $g:X\tto Y$ is birational. }

\begin{proof}
Consider, in the diagram (\ref{bigd}),  the   generically finite morphism  $v_1: X\tto Y_1$ and the three fibrations $f'_{\alpha }:Y_1\tto E_\alpha $, for $\alpha\in\{1,2,3\}$.
  Since $X$, $Y_1$, and $S_3$ are all of maximal Albanese dimensions, $\omega_{X/Y_1}$ and $\omega_{Y_1/S_3}$ are effective, hence
 $$h^0(X, \omega_X\otimes a_X^*P_\xi )\ge  h^0(Y_1, \omega_{Y_1}\otimes (f'_1,f'_2)^*P_\xi)\ge  h^0(S_3, \omega_{S_3}\otimes (h_{31},h_{32})^*P_\xi)$$
  for all $\xi \in \PE_1\times \PE_2$.
 Moreover, for $\xi  $  non-zero, we have by Proposition \ref{surj}.d)
  $$h^2(X, \omega_{X}\otimes a_X^*P_\xi)=h^3(X, \omega_{X}\otimes a_X^*P_\xi)=0$$
  hence, since $\chi(X,\cO_X)=0$,
 $$h^0(X, \omega_{X}\otimes a_X^*P_\xi)=h^1(X, \omega_{X}\otimes a_X^*P_\xi).
 $$
 Finally, for $\xi  $ general in $\PE_1\times \PE_2$, we have, as in the proof of Theorem \ref{q1}, since $g_3:X\tto S_3$ has connected fibers,
 $$h^1(X, \omega_X\otimes a_X^*P_\xi )=  h^0(S_3, \omega_{S_3}\otimes (h_{31},h_{32})^*P_\xi).$$
 Therefore, for $\xi\in \PE_1\times \PE_2$ general, we obtain
 $$h^0(X, \omega_X\otimes a_X^*P_\xi)=h^0(Y_1, \omega_{Y_1}\otimes (f'_1,f'_2)^*P_\xi).$$
The induced morphism $ Y\tto E_1\times E_2\times E_3$ is the Albanese mapping of $Y$.
Since in any event, we always have 
$$h^0(X, \omega_X\otimes a_X^*P_\xi)\ge h^0(Y , \omega_Y\otimes a_Y^*P_\xi)\ge h^0(Y_1 , \omega_{Y_1}\otimes (f'_1,f'_2)^*P_\xi) $$
for all $\xi\in  \PE_1\times \PE_2 $, we obtain
\begin{equation}\label{fin}
 h^0(X, \omega_X\otimes a_X^*P_\xi)= h^0(Y , \omega_Y\otimes a_Y^*P_\xi)
\end{equation}
for $\xi  $ general in $\PE_1\times \PE_2$, hence also, by   Step 3, for  
$\xi  $ general in $V_0(\omega_X, a_X)$.
But for  $\xi\notin V_0(\omega_X, a_X)$,   both sides  of (\ref{fin}) vanish.   By Lemma \ref{3-fm} below, we    conclude that $g$ is a birational morphism. 
  \end{proof}

The following lemma (used in the proof above)  is in the spirit of   \cite{hp}, Theorem 3.1.

\begin{lemm}\label{3-fm}Let $X\stackrel{g}{\tto} Y\stackrel{f}{\tto}  A$ be generically finite morphisms between smooth projective threefolds, where $A$ is an abelian threefold,  such that $  f$ and $f\circ g$ are  both minimal. Assume that $X$ is of general type with $\chi(X, \omega_X)=0$ and that there exists an open subset   $U\subset \PA$ with $\codim_{\PA}(\PA\moins U)\ge 2$ such that
 $$h^0(X, \omega_X\otimes g^*f^*P_\xi)=h^0(Y, \omega_Y\otimes f^*P_\xi) $$ for all $\xi\in U $. Then  $g$ is birational.
\end{lemm}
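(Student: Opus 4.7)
The plan is to use the trace-map splitting to reduce the birationality of $g$ to the vanishing of an auxiliary Koll\'ar sheaf on $A$, and then to extract this vanishing from the codimension-$2$ hypothesis via generic vanishing and the Chen--Jiang structural decomposition.

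\emph{Setup.} The first step is to exploit the splitting of the trace map $g_*\omega_X\tto\omega_Y$ (see \S\ref{23}), which gives a decomposition $g_*\omega_X=\omega_Y\oplus\cQ$ of $\cO_Y$-modules. Pushing forward by $f$ and using that higher direct images of $\omega_X$ under $f\circ g$ vanish, one obtains $(f\circ g)_*\omega_X\isom f_*\omega_Y\oplus\cF$ with $\cF:=f_*\cQ$. The hypothesis becomes $H^0(A,\cF\otimes P_\xi)=0$ for every $\xi\in U$, so $V^0(\cF)\subset\PA\moins U$ has codimension at least $2$. Showing $\cF=0$ is then enough: since $\cQ$ is torsion-free (as a direct summand of $g_*\omega_X$) and $f$ is generically finite, $\cF=0$ forces $\cQ=0$, so $g_*\omega_X=\omega_Y$ has generic rank~$1$, i.e.\ $\deg(g)=1$ and $g$ is birational.

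\emph{Generic vanishing structure.} The second step is to apply generic vanishing to $\cF$. By Hacon's theorem, $(f\circ g)_*\omega_X$ is Koll\'ar; so is its direct summand $\cF$, giving $V^i(\cF)\subseteq V^0(\cF)$ for all $i$ and $\chi(A,\cF)\ge 0$. Combining with $\chi(Y,\omega_Y)\ge 0$ (Green--Lazarsfeld) and $\chi(X,\omega_X)=0$ one obtains $\chi(A,\cF)=\chi(Y,\omega_Y)=0$. The Chen--Jiang structural decomposition then yields $\cF\isom\bigoplus_i(\pi_i^*\cG_i)\otimes P_{\tau_i}$, where each $\pi_i:A\tto A_i$ is a quotient abelian variety, each $\cG_i$ is non-zero and $M$-regular on $A_i$ (so $V^0(\cG_i)=\widehat{A_i}$), and each $\tau_i\in\PA$ is torsion. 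The components of $V^0(\cF)$ are the translates $-\tau_i+\widehat{\pi_i}(\widehat{A_i})$, hence the codimension bound forces $\dim A_i\le 1$ for every~$i$.

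\emph{Elimination of the surviving pieces.} Each non-trivial summand contributes to $V^0(\cF)\subset\PA\moins U$ either a torsion point (when $\dim A_i=0$) or a translate of a one-dimensional subtorus (when $\dim A_i=1$). The plan is to eliminate every such piece using the minimality of $f\circ g$ (injectivity of $\widehat{f\circ g}:\PA\hookrightarrow\widehat X$), the fact that $V_0(\omega_X,f\circ g)$ generates $\PA$ (since $X$ is of general type), and the hypothesis $H^0(A,\cF\otimes P_\xi)=0$ on $U$, arriving at $\cF=0$ and hence at the birationality of $g$.

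\textbf{Main obstacle.} The last step is the delicate one. The Koll\'ar containment $V^i\subseteq V^0$ together with $\codim V^0\ge 2$ only gives vanishing of every $H^i(A,\cF\otimes P_\xi)$ over the open set $U$, which is strictly weaker than what Mukai's inversion theorem needs to conclude $\cF=0$ outright. The Chen--Jiang decomposition reduces the problem to a finite check on $M$-regular pieces pulled back from low-dimensional quotients; completing the elimination then relies on the combined strength of the minimality of $f\circ g$, the general-type condition, and the codimension-$2$ placement of $V^0(\cF)$ inside $\PA\moins U$.
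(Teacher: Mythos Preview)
Your setup is correct and coincides with the paper's: split $g_*\omega_X=\omega_Y\oplus\cQ$, push forward to get $\cF=f_*\cQ$, and reduce to $\cF=0$. But the proof is genuinely incomplete: you yourself flag the ``main obstacle'' and never carry out the elimination of the Chen--Jiang pieces with $\dim A_i\le 1$. Concretely, a summand $\pi^*\cG\otimes P_\tau$ with $\pi:A\tto E$ onto an elliptic curve and $\cG$ $M$-regular on $E$ has $V^0=V^1=V^2$ equal to the one-dimensional translate $-\tau+\widehat\pi(\widehat E)$, and nothing you have written (minimality, $V_0$ generating $\PA$, the codimension-$2$ hypothesis on $V^0$) rules this out. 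The inclusions $V^i(\cF)\subset V^0(\cF)$ alone only bound $\codim V^i(\cF)\ge 2$, which is \emph{not} $M$-regularity (that needs $\codim V^i>i$), so Pareschi--Popa does not apply and Mukai inversion is unavailable.

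The paper closes exactly this gap by a direct route that avoids Chen--Jiang entirely. Since $X$ is of general type with $\chi(X,\omega_X)=0$ and maps generically finitely to the abelian threefold $A$, Corollary~\ref{two}.b) forces $A$ to have three simple (elliptic) factors, and then Proposition~\ref{surj} gives $q(X)=3$ (so $f\circ g=a_X$) and, crucially, $V_2(\omega_X,a_X)=\{0\}$. Hence $V_i(\cF)\subset V_i(g_*\omega_X,f)=V_i(\omega_X,a_X)\subset\{0\}$ for $i\in\{2,3\}$; and since $q(Y)=3$ as well, one has $h^i(Y,g_*\omega_X)=h^i(Y,\omega_Y)$, so in fact $V_2(\cF)=V_3(\cF)=\varnothing$. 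Together with $V_0(\cF)=V_1(\cF)\subset\PA\moins U$ of codimension $\ge 2$ (using $\chi(Y,\omega_Y)=0$), this gives $\codim V_i(\cF)>i$ for every $i$, i.e.\ $\cF$ is $M$-regular outright. Then $\cF$ is continuously globally generated, and $H^0(A,\cF\otimes P_\xi)=0$ for $\xi\in U$ forces $\cF=0$. The key input you are missing is Proposition~\ref{surj}.d); once you invoke it, the structural decomposition becomes unnecessary.
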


\begin{proof}By \S\ref{23}, we can write $g_*\omega_X\isom   \omega_Y\oplus \cE$, and we need to show that the sheaf $\cE$ is zero. Since $\cE$ is torsion-free and $f$ is generically finite, it is  sufficient to prove $f_* \cE =0$.

As we saw at the beginning of \S\ref{s3}, we have $q(X)=3$, hence $f\circ g$ 
  is the Albanese mapping of $X$. By Proposition \ref{surj}.d), for each $i\in\{2,3\}$, we then  have $\{0\}=V_i(\omega_X, f\circ g)=V_i(g_*\omega_X, f)$, hence  $V_i(f_* \cE)\subset \{0\}$.  
  
 Since  $q(X)=3$, we also have $q(Y)=3$, hence $h^i(Y, g_*\omega_X)=h^i(Y, \omega_Y)$.  It follows that
 $V_i(f_* \cE)$ is empty.

The assumption $\chi(X, \omega_X)=0$ implies   $\chi(Y, \omega_Y)=0$ by (\ref{chi}). Thus,
 $$V_0(f_* \cE)=V_1(f_* \cE)\subset \PA\moins  U.$$
Since $\codim_{\PA}(\PA\moins U)>1$, the sheaf $f_* \cE $ is therefore M-regular in the sense of \cite{pp1}, Definition 2.1 (see also Remark 2.3), hence continuously \gge\ (\cite{pp1}, Definition 2.10 and Proposition 2.13). Since $H^0(A,f_* \cE\otimes P_\xi)=0$ for all $\xi\in U$,  we obtain $f_* \cE =0$.
\end{proof}

 Let us summarize what we know.
Let $\{i, j, k\}=\{1,2,3\}$.
The curve  $C_{ij}$ is the  (constant) general fiber of the isotrivial fibration $S_i\tto E_k$; it is acted on by a group $G_i$ and $C_{ij}/G_i\isom E_j$ (Step 2).
The fibration $g_i: X\tto S_i$ is also isotrivial; as we saw in Step 3, its general fiber $C_i$ is dominated by the curve 
$C_{ji}\times_{E_i}C_{ki}$ but also maps onto $C_{ji}$ and $C_{ki}$. Finally,  a general fiber $F_k$ of the isotrivial fibration $f_k: X\tto E_k$ is an isotrivial fibration over   $C_{ij}$  with (constant)   general fiber  $C_i$. The situation is summarized in the following diagram:
$$\xymatrix@C=20pt@R=20pt
{
& C_{ij}\ar@{^{(}->}[rr]^-{\rm fiber} &&S_i\ar@{->>}[dr]^{h_{ik}}\\
F_k\ar@{^{(}->}[rr]^-{\rm fiber}\ar@{->>}[ur]^-{{\rm fiber}\ C_i}\ar@{->>}[dr]_-{{\rm fiber}\ C_j} & &X\ar@{->>}[ur]^(.4){g_i}\ar@{->>}[rr]^{f_k}\ar@{->>}[dr]_(.4){g_j}&& E_k\\
& C_{ji}\ar@{^{(}->}[rr]^-{\rm fiber} && S_j.\ar@{->>}[ur]_{h_{jk}}
}$$
By Lemma \ref{l53},    there exists a finite group $H_k$ acting faithfully on $C_i$ and $C_j$ such that $C_{ij}\isom C_j/H_k$, $C_{ji}\isom C_i/H_k$, and $F_k$ is isomorphic to the diagonal quotient $(C_i\times C_j)/H_k$. Moreover, the maps to $C_{ij}$ and $C_{ji}$ are the natural projections.
  So we have diagrams
\begin{equation}\label{cu}
\xymatrix@C=30pt@R=8pt
{& C_{ji}\ar@{->>}[dr]^{/G_j}\\
C_i\ar@{->>}[ur]^{/H_k}\ar@{->>}[dr]_{/H_j}&& E_i.\\ 
& C_{ki}\ar@{->>}[ur]_{/G_k}}
\end{equation}
 
Let $D_1$   be the Galois closure of $C_1$ over $E_1$ and set  $G=\Gal(D_1/E_1)$. Let $\{j, k\}=\{2, 3\}$. There is  a normal subgroup  $N_j\lhd G$ such that $G_j=G/N_j$ and $G$ acts on $C_{jk}$ via this quotient. By Step 2, the surface $S_j$ is birationally isomorphic to 
 $(C_{j1}\times C_{jk})/G_j$, hence to  $(D_1\times C_{jk})/G$. Therefore, the   modification 
$Y_1$ of $S_2\times_{E_1}S_3$   (see   (\ref{bigd}))
is birationally isomorphic to
 $(D_1\times C_{23}\times C_{32})/G$.

\medskip
\noindent{\bf Step 6.}
{\em The group $G$ is isomorphic to $ \Z/2\Z$. }

  We begin with a lemma which is probably well-known. We denote by $\Irr(G)$  the set  of  isomorphism classes of   irreducible representations  of $G$.

\begin{lemm}\label{l55}
Let $E$ be an elliptic curve and let $\pi:D\tto E$ be a   Galois cover with group $G$. We can write
 $$\pi_*\cO_D= \bigoplus_{\chi\in \Irr(G)}\bigoplus_{i}\cV_{\chi, i} ,$$
 where each vector bundle 
$\cV_{\chi, i}$ is  semistable and $G$-invariant, and
    the representation of $G$   on the general fiber of each $\cV_{\chi, i}$ is a direct sum of $\chi$.  Moreover, for each $\chi\ne 1$, the dual vector bundle 
$\cV^\vee_{\chi, i}$  is either ample or a direct sum of non-zero torsion line bundles.
\end{lemm}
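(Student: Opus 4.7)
The plan is to (i) decompose $\pi_*\cO_D$ into $G$-isotypic components, (ii) refine this into semistable pieces using the Harder--Narasimhan filtration, and (iii) use relative duality and Fujita's decomposition theorem to analyze the duals.

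For (i) and (ii): since $G$ acts trivially on $E$, the $G$-equivariant sheaf $\pi_*\cO_D$ admits the canonical isotypic decomposition
\begin{equation*}
\pi_*\cO_D\isom\bigoplus_{\chi\in\Irr(G)}\widetilde\cU_\chi\otimes_\C V_\chi,
\end{equation*}
where $V_\chi$ is the representation space of $\chi$ and $\widetilde\cU_\chi:=\cH om_G(V_\chi,\pi_*\cO_D)$ is an ordinary vector bundle on $E$ with trivial $G$-action. On the elliptic curve $E$, the isomorphism $\omega_E\isom\cO_E$ together with Serre duality yields $\operatorname{Ext}^1(\cF,\cG)\isom\Hom(\cG,\cF)^\vee$, so both $\Hom$ and $\operatorname{Ext}^1$ between semistable bundles of distinct slopes vanish; the Harder--Narasimhan filtration of $\widetilde\cU_\chi$ therefore splits canonically. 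Writing $\widetilde\cU_\chi=\bigoplus_i\widetilde\cU_{\chi,i}$ with each $\widetilde\cU_{\chi,i}$ semistable and setting $\cV_{\chi,i}:=\widetilde\cU_{\chi,i}\otimes_\C V_\chi$, we obtain the desired decomposition: $\cV_{\chi,i}$ is semistable (as an ordinary sheaf it is a direct sum of $\dim\chi$ copies of $\widetilde\cU_{\chi,i}$), $G$-invariant, and by construction its $G$-action on fibers is a direct sum of copies of $\chi$.

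For (iii): Grothendieck duality for the finite morphism $\pi$ together with $\omega_E\isom\cO_E$ identifies $(\pi_*\cO_D)^\vee\isom\pi_*\omega_{D/E}$, so each $\cV_{\chi,i}^\vee$ is a semistable direct summand of $\pi_*\omega_{D/E}$. By Fujita's decomposition theorem on an elliptic curve, the nef bundle $\pi_*\omega_{D/E}$ splits as $\cA\oplus\bigoplus_j L_j$ with $\cA$ ample and the $L_j$ torsion line bundles. A semistable summand of positive degree must then be ample (nef of positive slope on a curve), while a summand of degree zero lies entirely within $\bigoplus_j L_j$ and is a direct sum of torsion line bundles. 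To rule out the trivial line bundle when $\chi\neq 1$, observe that if $\cO_E$ were a direct summand of $\widetilde\cU_{\chi,i}$, we would get a $G$-equivariant inclusion $V_\chi\otimes\cO_E\hookrightarrow\pi_*\cO_D$; taking global sections forces $V_\chi\hookrightarrow H^0(D,\cO_D)=\C$, hence $\chi=1$. The principal technical input, and the likely obstacle, is Fujita's decomposition theorem itself: the degree-zero part of $\pi_*\omega_{D/E}$ genuinely splits as torsion line bundles rather than carrying Atiyah-type unipotent indecomposable summands. This rests on Hodge-theoretic semisimplicity of the local system underlying a Galois cover, which here amounts to finiteness of the monodromy.
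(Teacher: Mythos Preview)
Your argument is correct, and for parts (i) and (ii) it matches the paper almost verbatim: isotypic decomposition followed by the canonical splitting of the Harder--Narasimhan filtration on an elliptic curve via Serre duality. The only cosmetic difference is that you factor out the multiplicity space $V_\chi$ explicitly, writing $\cV_{\chi,i}=\widetilde\cU_{\chi,i}\otimes_\C V_\chi$, whereas the paper works directly with the image of the projector $\sum_{g\in G}\chi(g)g$.

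For part (iii) the two proofs diverge. You invoke a Fujita-type decomposition of $\pi_*\omega_{D/E}$ and then argue that for a Galois cover the unitary flat part consists of torsion line bundles because the monodromy is finite. This is correct, but as you yourself flag, ``Fujita's decomposition theorem'' is normally stated for fibrations with positive-dimensional fibres, so some translation is needed; the semisimplicity of the degree-zero part genuinely uses the finiteness of $G$ (on an elliptic curve a generic degree-zero semistable bundle is an Atiyah bundle, not a sum of line bundles, and even when it is, the line bundles need not be torsion without the Galois hypothesis). The paper avoids this Hodge-theoretic detour entirely by exploiting the \emph{algebra} structure on $\pi_*\cO_D$: the direct sum $\cF$ of all degree-zero pieces $\cV_{\chi,i}$ is shown (citing \cite{KP}, Lemmas 3.2 and 3.4) to be a $G$-invariant subalgebra, hence $\cF=\pi'_*\cO_{D'}$ for an intermediate cover $D\to D'\to E$; since $\deg\cF=0$ this cover is \'etale, so $D'\to E$ is an isogeny and $\cF$ is a sum of torsion line bundles. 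This is both more elementary and more direct, and it makes transparent why the line bundles are torsion. Your separate argument ruling out $\cO_E$ for $\chi\neq 1$ via global sections is cleaner than what the paper writes (there it is implicit in the connectedness of $D'$).
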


\begin{proof}
The groups $G$ acts on $\pi_*\cO_D$. Identifying each representation $\chi$ with its character, we consider the endomorphism  
$
\sum_{g\in G}\chi(g)g$ of $\pi_*\cO_D$ and we denote by $\cV^\vee_\chi$ its image. We then have  
\begin{equation}\label{piod}
\pi_*\cO_D=\bigoplus_{\chi} \cV_\chi,
\end{equation}
where the general fiber of  $\cV_\chi$ is a (nonzero) direct sum of $\chi$ as a $G$-module.

The Harder-Narasimhan filtration 
$$
0=\cV^\ell_\chi\subset \cV_\chi^{\ell-1}\subset  \cdots \subset \cV_\chi^0=\cV_\chi
$$
  is preserved by the $G$-action. Serre duality shows that since we are on an elliptic curve, all the corresponding extensions are trivial, hence $\cV_\chi$ is the direct sum of the $G$-invariant semistable bundles $\cV_{\chi, i}=\cV_\chi^i/\cV_\chi^{i+1}$, for $0\le i< \ell$.

As a direct summand of $\pi_*\omega_D=(\pi_*\cO_D)^\vee$, each vector bundle $\cV^\vee_{\chi, i}$ is   nef (\cite{V}, Corollary 3.6). Moreover, it is ample if it has positive degree.
Consider the maximal degree-$0$ subsheaf $\cF$ of $\pi_*\cO_D$, \ie\  the  direct sum of all $\cV_{\chi,i}$ that have degree 0.  By \cite{KP},  Lemma 3.2 and 3.4,   $\cF$ is a $G$-invariant subalgebra and induces an \'{e}tale cover of $E $, hence is a direct sum of torsion line bundles. 
 \end{proof}
 
 Let us continue with the Galois cover $\pi:D\tto E$  with group $G$ as in the lemma and assume moreover that  for each $j\in\{2,3\}$, we have a Galois cover $\pi_j: D_j\tto  E_j$   with Galois group $G_j=G/N_j$, where $g(D_j)\geq 2$ and $E_j$ is an elliptic curve.

 Then $G$ acts on $D_1\times D_2\times D_3$ diagonally. Let $Z$ be the quotient, let $\eps:Y\tto Z$ be  a resolution, and consider
 $$t: Y\stackrel{\varepsilon}{\tto} Z \tto E_1\times E_2\times E_3.$$

 \begin{lemm}\label{2elements}Assume 
$$
V_0(\omega_Y, t)\subset (\PE_1\times \PE_2\times \{0\})\cup
 (\PE_1\times
\{0\}\times \PE_3)\cup( \{0\}\times \PE_2\times \PE_3)
$$
 and $V_2(\omega_Y, t)\cup V_3(\omega_Y, t)\subset \{0\}$. Then  $N_2=N_3$ and $G_2\isom G_3\isom \Z/2\Z$.
\end{lemm}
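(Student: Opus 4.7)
My plan is to describe $t_*\omega_Y$ explicitly via Galois theory and then confront its cohomological support loci with the given constraints.

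\textbf{Step 1 (decomposition of $t_*\omega_Y$).} Let $\mu \colon D_1 \times D_2 \times D_3 \to E_1 \times E_2 \times E_3$ denote the product map. Since $Z = (D_1 \times D_2 \times D_3)/G$ is a finite quotient of a smooth variety, it has rational singularities; together with $\eps_*\omega_Y \isom \omega_Z$ and K\"unneth, this gives
$$t_*\omega_Y \isom \bigl(\pi_{1*}\omega_{D_1} \boxtimes \pi_{2*}\omega_{D_2} \boxtimes \pi_{3*}\omega_{D_3}\bigr)^G.$$
I will then apply Lemma \ref{l55} to each factor, decomposing $\pi_{j*}\omega_{D_j} = \bigoplus_{\rho \in \Irr(G_j)} \cW^j_\rho$ into $G_j$-isotypic summands; for $\rho \ne 1$ each $\cW^j_\rho$ is a direct sum of an ample vector bundle and some \emph{non-zero} torsion line bundles, while $\cW^j_1 \isom \cO_{E_j}$. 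Viewing each $\rho_j \in \Irr(G_j)$ as a $G$-representation via $G \tto G_j$, the $G$-invariants expand as
$$t_*\omega_Y \isom \bigoplus_{(\rho_1,\rho_2,\rho_3)} \Hom_G\bigl(\mathbf{1}, \rho_1 \otimes \rho_2 \otimes \rho_3\bigr) \otimes \bigl(\cW^1_{\rho_1} \boxtimes \cW^2_{\rho_2} \boxtimes \cW^3_{\rho_3}\bigr).$$

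\textbf{Step 2 (case analysis on ``all non-trivial'' summands).} Over the \'etale locus of each $\pi_j$, the fiber of $\pi_{j*}\cO_{D_j}$ is the regular representation of $G_j$, so every $\cW^j_{\rho_j}$ is non-zero. Suppose there is a triple $(\rho_1, \rho_2, \rho_3)$ with each $\rho_j \ne 1$ and $\Hom_G(\mathbf{1}, \rho_1 \otimes \rho_2 \otimes \rho_3) \ne 0$. By K\"unneth, its contribution to $V_0(\omega_Y, t)$ is $V_0(\cW^1_{\rho_1}) \times V_0(\cW^2_{\rho_2}) \times V_0(\cW^3_{\rho_3})$, where each factor equals $\PE_j$ if $\cW^j_{\rho_j}$ has an ample summand and otherwise is a non-empty finite set of \emph{non-zero} torsion points of $\PE_j$. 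I will check the four resulting sub-cases in turn: three ample factors produces all of $\PA$; two ample factors produces a translate $\PE_\alpha \times \PE_\beta \times \{\eta\}$ with $\eta \ne 0$; one ample factor produces a curve $\PE_\alpha \times \{\eta_\beta\} \times \{\eta_\gamma\}$ with $\eta_\beta, \eta_\gamma \ne 0$; no ample factor produces a finite set of points with no zero coordinate. None of these is contained in $(\PE_1 \times \PE_2 \times \{0\}) \cup (\PE_1 \times \{0\} \times \PE_3) \cup (\{0\} \times \PE_2 \times \PE_3)$, contradicting the hypothesis on $V_0(\omega_Y, t)$.

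\textbf{Step 3 (representation-theoretic conclusion).} Step 2 forces $\Hom_G(\mathbf{1}, \rho_1 \otimes \rho_2 \otimes \rho_3) = 0$ for every triple of non-trivial irreducibles. Letting $\rho_1$ range over all non-trivial irreducibles of $G$, this says $\rho_2 \otimes \rho_3$ has no non-trivial irreducible $G$-summand, i.e.\ $\rho_2(g) \otimes \rho_3(g) = \id$ for every $g \in G$ and every pair of non-trivial $(\rho_2, \rho_3) \in \Irr(G_2) \times \Irr(G_3)$. A tensor product of two matrices equals the identity only when both are scalar matrices with mutually inverse scalars; combined with Schur's lemma applied to the irreducible $\rho_j$, this forces $\dim\rho_2 = \dim\rho_3 = 1$ (so $G_2, G_3$ are abelian) and $\rho_2\cdot \rho_3 = 1$ as characters of $G$. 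Since this identity must hold uniformly as $\rho_2, \rho_3$ vary over all non-trivial characters, and since $g(D_j) \ge 2$ ensures $G_j \ne 1$, each of $\widehat{G_2}$ and $\widehat{G_3}$ has exactly one non-trivial element, giving $G_2 \isom G_3 \isom \Z/2\Z$. The unique non-trivial characters of $G_2$ and $G_3$ pull back to the same character of $G$, whose kernel yields $N_2 = N_3$.

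\textbf{Main obstacle.} The crucial input driving Step 2 is the fact (from Lemma \ref{l55}) that the torsion line bundles appearing in the non-trivial isotypic components are \emph{non-zero}; without this the ``all non-trivial'' contributions could not be excluded, since one could not prevent their supports from accidentally meeting the prescribed union through the origin. The case analysis itself is routine once this is in place, and the hypotheses on $V_2$ and $V_3$ do not appear to enter this particular reduction.
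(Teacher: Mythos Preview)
Your argument is correct and in fact slightly sharper than the paper's. Both proofs rest on the same decomposition
\[
t_*\omega_Y\;\isom\;\bigl(\pi_{*}\omega_{D}\boxtimes\pi_{2*}\omega_{D_2}\boxtimes\pi_{3*}\omega_{D_3}\bigr)^G
\]
and on Lemma~\ref{l55}, but they diverge in how the constraints are exploited. The paper first looks at summands with $\rho_3=1$ (resp.\ $\rho_2=1$) and uses the hypotheses $V_2(\omega_Y,t)\cup V_3(\omega_Y,t)\subset\{0\}$ to force the non-trivial isotypic pieces $\cV^{j\vee}_{\mu,1}$ on $E_2,E_3$ to be ample; only then does it pass to an all-non-trivial triple and invoke the $V_0$ constraint. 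You instead go straight to all-non-trivial triples $(\rho_1,\rho_2,\rho_3)$ and run a four-case analysis on how many factors carry an ample summand, using only the $V_0$ hypothesis; the crucial point---which you correctly isolate---is that Lemma~\ref{l55} guarantees the torsion line bundles occurring for $\rho_j\neq 1$ are \emph{non-zero}, so every case produces a point of $V_0$ with all three coordinates non-zero. Your representation-theoretic endgame (Step~3) is also cleaner than the paper's existence argument for a triple $(\chi,\mu,\nu)$. As you observe, this shows the $V_2,V_3$ hypothesis is not actually needed for the lemma. One minor notational quibble: your displayed formula for $t_*\omega_Y$ should carry the multiplicity bundles $\cW'^j_{\rho_j}=\Hom_{G_j}(\rho_j,\pi_{j*}\omega_{D_j})$ rather than the full isotypic pieces $\cW^j_{\rho_j}=\rho_j\otimes\cW'^j_{\rho_j}$ (otherwise the rank is off by $\prod\dim\rho_j$); this does not affect the argument since $V_0(\cW'^j_{\rho_j})=V_0(\cW^j_{\rho_j})$.
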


\begin{proof}We decompose $\pi_*\cO_D$ as in (\ref{piod}) and write similarly
$$\pi_{j*}\cO_{D_j}= \bigoplus_{\mu\in \Irr(G_j)}\bigoplus_i\cV_{\mu, i}^j.$$
 Since quotient singularities are rational, we have as in Example \ref{eel}
$$
t_*\omega_Y\isom  (q_*\cO_Z)^\vee \isom  \big((\pi_*\cO_D)^ \vee\boxtimes ((\pi_{2*}\cO_{D_2})^ \vee\boxtimes ((\pi_{3*}\cO_{D_3})^ \vee\big)^G.
$$
Let $\mu$ be an non-trivial element of $\Irr(G_2)$.  Since $G_2$ is a quotient of $G$, the representation $\mu$ and its complex conjugate $\overline{\mu}$ are also in $\Irr(G)$. Then, the vector bundle
 $$ \cG:=(\cV^\vee_{\overline{\mu},1}\boxtimes \cV^{2\vee}_{\mu,1}\boxtimes\cO_{E_3})^G$$ 
on $E_1\times E_2\times E_3 $  is a non-zero direct summand of both $\cV^\vee_{\overline{\mu},1}\boxtimes \cV^{2\vee}_{\mu,1}\boxtimes\cO_{E_3}$ and $t_*\omega_Y$.
 
 Assume that  $\cV^{2}_{\mu,1}$ has degree 0,  hence is a direct sum of non-trivial torsion line bundles. 
\begin{enumerate}
\item[$\bullet$]   If $\deg(\cV^\vee_{\overline{\mu},1})=0$, the sheaf $\cG$ is a direct sum of non-trivial torsion line bundles, which is impossible since $V_3(\cG)\subset V_3(\omega_Y, t)=\{0\}$.
\item[$\bullet$]   If $\cV^\vee_{\overline{\mu},1}$ is ample,   we can write 
  $$\cG=\bigoplus_k (\cG_k\boxtimes P_{\xi_k}\boxtimes\cO_{E_3}),$$
   where $\cG_k$ is a direct summand of $\cV^\vee_{\overline{\mu},1}$, hence ample, and the $\xi_k$ are  non-zero  torsion points  in $\PE_2$. This is again impossible, because $V_2(\cG)\subset V_2(\omega_Y, t)=\{0\}$.
\end{enumerate}

Therefore,  $\cV^{j\vee}_{\mu,1}$ is ample for all $\mu$ non-trivial in $\Irr(G_j)$.  

If $\Card(G_2)>2$, or if $N_2\ne N_3$, we may take non-trivial  $\chi\in\Irr(G)$, $\mu\in\Irr(G_2)$, and $\nu\in\Irr(G_3)$ such that $\chi$ is a subrepresentation of $\mu\otimes \nu$. The vector bundle 
$$\cH:=(\cV^\vee_{\overline{\chi}, 1}\boxtimes \cV^{2\vee}_{\mu, 1} \boxtimes \cV^{3\vee}_{\nu, 1})^G$$ 
is then non-zero  and a direct summand of  $t_*\omega_Y$ (and $\cV^{2\vee}_{\mu, 1} $ and $ \cV^{3\vee}_{\nu, 1}$ are ample). 

If $\cV^\vee_{\overline{\chi}, 1}$ is ample, since $\cH$ is a direct summand of $\cV^\vee_{\overline{\chi}, 1}\boxtimes \cV^{2\vee}_{\mu, 1} \boxtimes \cV^{3\vee}_{\nu, 1}$, we have $V_m(\cH)=\vide$ for all $m\in\{1,2,3\}$. Hence $h^0(E_1\times E_2\times E_3, \cH\otimes P_\xi)$ is a non-zero constant for all $\xi\in \PE_1\times \PE_2\times \PE_3$ and $V_0(\cH)=\PE_1\times \PE_2\times \PE_3$, which contradicts our assumptions.

 If $\cV^\vee_{\overline{\chi}, 1}$ is a direct sum of non-trivial torsion line bundles, we may write 
 $$\cH=\bigoplus_k (P_{\xi_k}\boxtimes \cH_k),$$
  where the  $\xi_k$ are non-zero   torsion points in $\PE_1$ and $\cH_k$ is a direct summand of $\cV^{2\vee}_{\mu, 1} \boxtimes \cV^{3\vee}_{\nu, 1}$. Then $V_0(\cH)$, hence also $V_0(\omega_Y, t)$, contains   $\{-\xi_1\}\times\PE_2\times\PE_3$, which contradicts our assumptions.
\end{proof}

We now apply this second lemma to the Galois covers $\pi:D_1\tto E_1$, $\pi_2:C_{23}\tto E_2$, and $\pi_3:C_{32}\tto E_3$. The variety $Y$ of the lemma is the variety $Y_1$ of the proof, and since 
$V_0(\omega_{Y_1}, t)\subset V_0(\omega_X, a_X)$ (see \S\ref{23}), the hypotheses of the lemma are satisfied (Step 4).

We obtain $N_2=N_3$, hence the coverings $C_{ji}\tto E_i$ and $C_{ki}\tto E_i$ are the same (see (\ref{cu})), and also $G/N_j  \isom \Z/2\Z$, so that they are double covers. Denote them by $C'_i\tto E_i$. 
By the proof of Step 3 (see (\ref{stu})),  $X$ is birational to $ (C'_1\times C'_2\times C'_3)/( \Z/2\Z)$. Since the latter variety contains no rational curves, there is a birational {\em morphism} from $X$ to it. This finishes the proof of Theorem \ref{3cur}.
\end{proof}

 \section{Varieties with $P_1=1$}\label{p11}

It follows from \cite{ueno} and \ref{215}  that varieties $X$ of maximal Albanese dimension and $P_1(X)=1$ satisfy $\chi(X,\omega_X)=0$. We presented in Example \ref{chh} a construction of Chen and Hacon of such a variety which is in addition of general type. We gather here some properties of these varieties (most of them taken from \cite{ueno}).

\begin{prop}
Let $X$ be a smooth projective variety of maximal Albanese dimension $n$, with  $P_1(X)=1$.

\begin{enumerate}
\item[a)] We have an isomorphism
$$a_X^*:\bigwedge^\bullet H^0(A,\Omega_A)\isom H^0(X,\Omega^\bullet_X).$$
In particular, $h^j(X,\cO_X)=\binom{n}{j}$ for all $j$, hence $\chi(X,\omega_X)=0$, and 
the  Albanese mapping $a_X:X\tto \Alb(X)$   is surjective.
\item[b)]  The point 0 is isolated in $V_0(\omega_X,a_X)$.
\end{enumerate}
\end{prop}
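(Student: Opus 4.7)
The plan is to exploit the decomposition $a_{X*}\omega_X \simeq \omega_A \oplus \cE$ (valid once $a_X$ is surjective) together with the generic vanishing of $\cE$, comparing all cohomology of $X$ to that of $A$.

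For surjectivity of $a_X$, I will deduce it from part (b). Indeed, if $\{0\}$ is isolated in $V_0(\omega_X,a_X)$, then it is a zero-dimensional component of codimension $q:=\dim A$, and \ref{214} forces $q\le n$, so $q=n$ by maximal Albanese dimension. To prove (b) directly, I use the Green--Lazarsfeld derivative complex, which places $T_0 V_0(\omega_X,a_X)$ inside the kernel of $\cup s: H^1(\cO_X)\to H^1(\omega_X)$ for the unique (up to scale) section $s\in H^0(X,\omega_X)$. The injectivity of $\cup s$ then follows from the exactness of the BGG / derivative complex for varieties of maximal Albanese dimension (Hacon, \cite{hp}).

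Assuming $a_X$ surjective, the normalized trace splits $a_{X*}\omega_X \simeq \omega_A \oplus \cE$ for some coherent sheaf $\cE$ on $A$. Since $a_{X*}\omega_X$ is GV by \ref{212}, so is its direct summand $\cE$. One computes
\[
h^0(A,\cE) \;=\; h^0(X,\omega_X) - h^0(A,\omega_A) \;=\; 1 - 1 \;=\; 0.
\]
The chain of inclusions \ref{213}, together with $V_k(\omega_X,a_X) = \{0\}\cup V_k(\cE)$ and the (Pareschi--Popa) chain $V^k(\cE)\subseteq V^{k-1}(\cE)$ for GV sheaves, propagates $0\notin V_0(\cE)$ to $0\notin V_k(\cE)$ for all $k$, so $h^k(A,\cE)=0$. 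Hence $h^k(X,\omega_X) = h^k(A,\omega_A) = \binom{n}{k}$ for every $k$, giving $\chi(X,\omega_X) = 0$; by Serre duality $h^j(X,\cO_X) = \binom{n}{j}$ and by Hodge symmetry $h^0(X,\Omega^j_X) = \binom{n}{j}$.

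The isomorphism $a_X^*: \bigwedge^\bullet H^0(A,\Omega_A) \to H^0(X,\Omega^\bullet_X)$ then follows from the Albanese iso on $H^0(\Omega^1)$ together with injectivity of the wedge map $\bigwedge^k H^0(X,\Omega^1_X) \to H^0(X,\Omega^k_X)$: at an \'etale point of $a_X$, the pullbacks of a basis of $H^0(A,\Omega^1)$ restrict to a basis of $\Omega^1_{X,x}$, so their $k$-fold wedges are linearly independent globally; the dimensions match. Part (b) itself is immediate: $V_0(\omega_X,a_X) = \{0\}\cup V_0(\cE)$ with $0\notin V_0(\cE)$. The main obstacle is the propagation from $h^0(\cE) = 0$ to all $h^k(\cE) = 0$, which requires the chain of inclusions for GV sheaves (equivalently the Pareschi--Popa Fourier--Mukai analysis), and analogously the injectivity of $\cup s$ used in the direct proof of (b) rests on the BGG exactness in maximal Albanese dimension.
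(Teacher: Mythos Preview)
Your overall strategy---prove (b) first, deduce surjectivity from it via \ref{214}, then use the trace splitting $a_{X*}\omega_X\simeq\cO_A\oplus\cE$ to compute all the Hodge numbers---is genuinely different from the paper's, which simply invokes Ueno's results (\cite{ueno}, or \cite{mor}, Corollaries (3.4)--(3.5)) to obtain surjectivity and the isomorphism in (a) directly, and only afterwards reads off (b) from the splitting.

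However, your direct argument for (b) has a real gap. You correctly observe that $T_0V_0(\omega_X,a_X)$ sits inside $\ker\bigl(\cup\, s:H^1(\cO_X)\to H^1(\omega_X)\bigr)$, but the claim that this map is injective ``by exactness of the BGG/derivative complex for varieties of maximal Albanese dimension'' is not justified. The exactness results you allude to (Green--Lazarsfeld, Ein--Lazarsfeld, Lazarsfeld--Popa) concern the complex $H^\bullet(\omega_X)$ with differential $\cup\, v$ for a \emph{generic} $v\in H^1(\cO_X)$; they do not give injectivity of $\cup\, v$ at \emph{every} nonzero $v$. Indeed, for $X=C_1\times C_2$ with $g(C_i)\ge 2$ (still of maximal Albanese dimension) one can choose $v$ and $s$ with $s\cup v=0$. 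Worse, once $P_1(X)=1$, the identity $\ker(\cup\, s)\cong H^0(\omega_X|_{\mathrm{div}(s)})\cong T_0V_0(\omega_X,a_X)$ shows that injectivity of $\cup\, s$ is \emph{equivalent} to (b), so invoking it to prove (b) is circular without independent input. Your citation of \cite{hp} is also off: Hacon--Pardini do not prove this statement.

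Since your deduction of surjectivity rests on (b), and the splitting $a_{X*}\omega_X\simeq\cO_A\oplus\cE$ requires surjectivity, the whole argument currently hangs on this unproved injectivity. The paper closes this gap by appealing to Ueno's theorem that a subvariety $Z\subset\Alb(X)$ with $P_1(Z)\le 1$ is a translated abelian subvariety, which forces $a_X$ to be onto. Once that is in hand, your computation $0\notin V_0(\cE)\Rightarrow 0\notin V_k(\cE)$ via the Pareschi--Popa chain for GV sheaves is valid (though outside the paper's reference list), and your direct verification of the wedge isomorphism is a nice alternative to citing Ueno again.
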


\begin{proof}
 Replacing $X$ with a modification, we may assume that there is a factorization $a_X:X  \tto   Z\to \Alb(X)$, where   $  Z$ is a desingularization of $ a_X(X)$,
so that $ P_1(  Z)\le P_1(X)=1$.   It follows from  \cite{ueno} (or   \cite{mor}, Corollary (3.5)) that $a_X(X)$ is a translate of an abelian subvariety of $\Alb(X)$, hence $a_X$ is surjective. 
Item a) then follows from another result of Ueno (\cite{ueno}, or   \cite{mor}, Corollary (3.4)).

By \S\ref{23}, we can write  $a_{X*}\omega_X \isom \omega_A \oplus \cE\isom  \cO_A \oplus \cE $. The sheaf $\cE$ then satisfies   $V_i(\cE)\moins\{0\}=V_i(\omega_X,a_X)\moins\{0\}$ for all $i$.
Since $1=P_1(X)=1+h^0(A, \cE)$, the point $0$ is not in the closed set $ V_0(\cE)$, hence is isolated in $V_0(\omega_X,a_X)$. This proves b).
\end{proof}

\begin{rema}\upshape
Regarding item b), to be more precise, a smooth projective variety $X$ of maximal Albanese dimension satisfies $P_1(X)=1$ {\em if and only if} 0 is isolated in $V_0(\omega_X, a_X)$.
\end{rema}

\begin{theo}\label{th63}
Let $X$ be a smooth projective threefold  of maximal Albanese dimension and of general type.
If $P_1(X)=1$, the variety $X$  is  a modification of an abelian \'{e}tale cover of a Chen-Hacon threefold.
\end{theo}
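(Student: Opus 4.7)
The plan is to reduce to the setup of Theorem \ref{3cur} and then use the condition $P_1(X)=1$ to pin down the \'etale cover group. Since $P_1(X)=1$ together with maximal Albanese dimension implies $\chi(X,\omega_X)=0$ (by the Proposition at the start of \S\ref{p11}), Theorem \ref{3cur} will provide a Cartesian diagram in which $\pi:\widetilde X\to X$ is an abelian \'etale cover of Galois group $G:=\Ker(\eta)$, $\widetilde X$ is birationally the Ein-Lazarsfeld threefold $(C_1\times C_2\times C_3)/\langle\iota_1\iota_2\iota_3\rangle$, and $\eta:A=E_1\times E_2\times E_3\tto\Alb(X)$ is an isogeny. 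Example \ref{eel} then supplies the explicit formula
$$a_{\widetilde X*}\omega_{\widetilde X}\isom\cO_A\oplus(L_1\ot L_2)\oplus(L_1\ot L_3)\oplus(L_2\ot L_3).$$

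Next I translate $P_1(X)=1$ into a vanishing of $G$-invariants. By \'etaleness of $\pi$, for each $\xi\in\widehat{\Alb(X)}$ one has
$$h^0(X,\omega_X\ot a_X^*P_\xi)=h^0(A,a_{\widetilde X*}\omega_{\widetilde X}\ot P_{\hat\eta(\xi)})^G,$$
and K\"unneth shows that the summand $L_i\ot L_j$ contributes to this $h^0$ only when the component $\hat\eta(\xi)_k$ in the third index $k$ vanishes. Thus $V_0(\omega_{\widetilde X},a_{\widetilde X}\circ\pi)$ is the union of three two-dimensional sub-tori through $0$ in $\widehat{\Alb(X)}$, and $V_0(\omega_X,a_X)$ is obtained by further imposing non-vanishing of $G$-invariants on the corresponding tensor-product of sections. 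The Remark in \S\ref{p11} reformulates $P_1(X)=1$ as: $0$ is isolated in $V_0(\omega_X,a_X)$. Hence for each $k\in\{1,2,3\}$, the $G$-invariants of $H^0(E_i,\delta_i\ot P_\alpha)\ot H^0(E_j,\delta_j\ot P_\beta)$ must vanish for $(\alpha,\beta)$ in a dense open subset of $\PE_i\times\PE_j$.

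The technical heart of the argument is to extract from these three simultaneous vanishings the structural constraints on $G$. Each $g=(g_1,g_2,g_3)\in G$ must preserve the Ein-Lazarsfeld branch divisor $B_j\in|\delta_j^{\ot 2}|$ of $\rho_j$ (so that $t_{g_j}^*B_j=B_j$), hence $g_j$ lies in the finite translation-stabilizer of $B_j$ in $E_j$. Analyzing the induced action of such stabilizer elements on sections of $\delta_j$ and twists --- using the Heisenberg framework implicit in Examples \ref{eel} and \ref{chh} --- the vanishing of $G$-invariants on all three tensor products should force $G\subset A[2]$, with each projection $G\to E_j$ cyclic of order at most $2$. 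A parity check across the three pairs $\{i,j,k\}$ will then rule out $\tau_j=0$ for the generator $\tau_j$ of the projection, yielding $G\subset\langle\tau_1,\tau_2,\tau_3\rangle\isom(\Z/2\Z)^3$ with all $\tau_j\neq 0$.

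Finally, with $(\tau_1,\tau_2,\tau_3)$ in hand, I set $\xi_j\in\PE_j$ to be the order-$2$ character dual to $\tau_j$, descend each double cover $\rho_j$ (via the $\tau_j$-equivariant lift, which exists because $\tau_j$ preserves $B_j$) to a double cover $\bar\rho_j:\bar C_j\to\bar E_j:=E_j/\langle\tau_j\rangle$, and form the Chen-Hacon threefold $X'$ of Example \ref{chh} from the descended data $(\bar E_j,\bar C_j,\bar\iota_j,\xi_j)_{j=1,2,3}$. By construction, the canonical degree-$8$ abelian \'etale cover of $X'$ (pulled back from the isogeny $E_1\times E_2\times E_3\to\bar E_1\times\bar E_2\times\bar E_3$) is birationally isomorphic to $\widetilde X$, and since $G\subset\Gal(\widetilde X/X')\isom(\Z/2\Z)^3$, the induced morphism $X=\widetilde X/G\to X'$ is an abelian \'etale cover, completing the proof. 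The hard part will be the Heisenberg-theoretic content of the third paragraph: isolating $G$ inside $(\Z/2\Z)^3$ --- with all three projections $\tau_j$ non-zero --- is subtle because the Ein-Lazarsfeld branch data and their stabilizer structure must be combined delicately with the $P_1(X)=1$ vanishings across all three tensor summands at once.
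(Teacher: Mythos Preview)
Your overall plan---reduce to Theorem \ref{3cur}, then use $P_1(X)=1$ to descend to a Chen--Hacon threefold---is the right shape, but the execution diverges from the paper at the crucial step and contains a real gap.

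The paper does \emph{not} try to bound the Galois group $G=\Ker(\eta)$ of $\widetilde X\to X$. Instead it does the opposite: it \emph{enlarges} $G$ to the product $\widetilde K:=K_1\times K_2\times K_3$ of its coordinate projections, shows (via a fiberwise analysis of the isotrivial maps $X\to A_i$) that the $K_j$-translation action on $E_j$ lifts to $C_j$ commuting with $\iota_j$, and then forms the further quotient $Y:=\widetilde X/\widetilde K$. Since $K\subset\widetilde K$, the map $X\to Y$ is an abelian \'etale cover, so $P_1(Y)\le P_1(X)=1$. Finally, the paper identifies $Y$ as a Chen--Hacon threefold by writing $a_Y$ as a $(\Z/2\Z)^2$-cover and analyzing Pardini's building data for $a_{Y*}\omega_Y$: the condition $P_1(Y)=1$ is what forces the three auxiliary $2$-torsion points $\xi_j$ to be non-zero. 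No constraint on the size or $2$-torsion nature of $G$ is ever needed or asserted.

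Your approach, by contrast, asserts that $P_1(X)=1$ forces $G\subset A[2]$ with each projection $K_j$ cyclic of order $\le 2$. This is where the gap lies. First, Theorem \ref{3cur} produces \emph{some} Ein--Lazarsfeld cover, not a minimal one: its proof passes freely to further abelian \'etale covers (e.g.\ to arrange that all components of $V_0$ pass through $0$), so $G$ can be large. Second, even granting the branch-divisor stabilizer observation, nothing prevents $K_j$ from being all of $E_j[2]\isom(\Z/2\Z)^2$ (not cyclic) when $\deg\delta_j\ge 2$, nor from containing odd-order torsion when $\deg\delta_j$ permits; the ``Heisenberg framework'' paragraph offers no argument that the invariant-vanishing conditions rule these out, and you yourself flag it as the hard part. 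Third, your final descent step presumes that quotienting $\rho_j$ by a single $\tau_j$ yields the right double cover $\bar C_j\to\bar E_j$; the paper's fiber analysis shows the actual picture is more delicate (the general fiber $D_i$ of $X\to A_i$ is a $(\Z/2\Z)^2$-cover of $F_i=E_i/K_i$ that factors through an \'etale double cover $D_i\to D_i'$), and this structure is what makes the Pardini building-data computation go through.

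In short: rather than trying to squeeze $G$ into $(\Z/2\Z)^3$, follow the paper and quotient \emph{past} $X$ by $\widetilde K/K$; then use $P_1=1$ only at the very end, on the $(\Z/2\Z)^2$-cover $a_Y$, to force the $\xi_j$ to be non-trivial.
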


 It is then very easy to describe all smooth projective threefolds $X$  of maximal Albanese dimension and of general type, with $P_1(X)=1$. Start from a Chen-Hacon threefold $Y$ as in Example \ref{chh}, with Albanese mapping $a_Y:Y\tto E_1\times E_2\times E_3$. It satisfies
 $$V_0(\omega_Y, a_Y)=\{0\}\cup (\PE_1\times \PE_2\times\{\xi_3\})\cup(\PE_1\times\{\xi_2\}\times \PE_3)\cup(\{\xi_1\}\times\PE_2\times \PE_3),
$$
where each $\xi_j\in\PE_j$ has order 2. Take an isogeny $A\tto E_1\times E_2\times E_3$   corresponding to a (finite) subgroup of $\PE_1\times\PE_2\times \PE_3$ which contains none of the points $\xi_1,\xi_2,\xi_3$. Finally, take   for $X$ a modification of   $Y\times_{E_1\times E_2\times E_3}A$.

\begin{proof}[Proof of the theorem]Replacing $a_X$ with it Stein factorization, we will assume that $X$ is normal and $a_X$ is finite. By Theorem \ref{3cur}, there exist elliptic curves $E_1$, $E_2$, and $E_3$, double coverings $\rho_i: C_i\tto E_i$,  with involution  $\iota_i$, and a commutative diagram
$$
\xymatrix@C=30pt
 {C_1\times C_2\times C_3  \ar@{->>}[dr]^{(\rho_1,\rho_2,\rho_3)}\ar@{->>}[d]\\
\widetilde X\ar@{->>}[d]\ar@{->>}[r]^-{a_{\widetilde X}}\ar@{}[dr]|\square & E_1\times E_2\times E_3\ar@{->>}[d]^\eta\\
X\ar@{->>}[r]_{a_X} &\Alb(X),}
$$
where $\eta$ is an isogeny (the variety $\widetilde X$ is the variety $Z$ of Example \ref{eel}) and both $a_X$ and $a_{\widetilde X}$ are $( \Z/2\Z)^2 $-Galois coverings. In particular, $X$ has rational singularities. 

We denote by $K$ the (finite) kernel of $\eta$ and by $K_i$ the image of the projection $K\to E_i$, so that $K$  is a subgroup of $ \widetilde K:=K_1\times K_2\times K_3$. 
The elliptic curve $F_i:=E_i/K_i$ embeds in $\Alb(X)$; let $\pi_i: \Alb(X)\tto A_i$ be the quotient. The natural
morphism $h_i: X\tto A_i$ is an isotrivial fibration and we denote by $D_i$ its general (constant) fiber. Now we consider the square restricted to fibers:
\begin{eqnarray*}
\xymatrix@C=40pt{
C_i\ar@{->>}[r]_{\rho_i}^{2:1} \ar@{->>}[d]_-{\rm{\acute etale}} & E_i\ar@{->>}[d]_-{\lambda_i}^-{\rm{\acute etale}}\\
D_i\ar@{->>}[r]_{t_i}^{4:1} & F_i,
}
\end{eqnarray*}
where $t_i$ is a $(\Z/2\Z)^2$-cover. Since $D_i\times_{F_i}E_i$ is disconnected and factors as $C_i\sqcup C_i\tto C_i \stackrel{\rho_i}{\tto}E_i$, there is a non-zero 2-torsion point 
$\xi_i\in \PF_i$ such that $\xi_i\in \Ker(t_i^*)\cap\Ker(\lambda_i^*)$, the morphism  $t_i$ factors as $D_i\stackrel{s_i}{\tto}D_i'\tto F_i$, where $s_i$ is a double \'{e}tale cover, and $C_i\isom D_i'\times_{F_i}E_i$. 
It follows that  the group $K_i$ acts on  $C_i$, and the involution $\iota_i$ and the $K_i$-action  commute. 
Therefore, $\widetilde K$ acts on $C_1\times C_2\times C_3$ and this  action commutes with the involution $(\iota_1, \iota_2, \iota_3)$. It follows that $\widetilde K$ acts on $\widetilde{X}$ and the Albanese mapping $a_{\widetilde X}$ is $\widetilde K$-equivariant. Set $Y:=\widetilde{X}/\widetilde K$.

 \begin{lemm}The quotient morphism $\widetilde{X}\tto Y $ is \'{e}tale and factors as $\widetilde{X}\tto X\tto Y$. The variety  $Y$ has maximal Albanese dimension, is of general type,  $P_1(Y)=1$, and its Albanese variety  is $F_1\times F_2\times F_3$.
 \end{lemm}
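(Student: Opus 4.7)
The plan is to verify each assertion of the lemma in turn, exploiting the Albanese equivariance of the $\widetilde K$-action to get freeness, then descending properties of $\widetilde X$ (or $X$) to $Y$ through the étale quotient.

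First, I argue that $\widetilde K$ acts freely on $\widetilde X$. The Albanese map $a_{\widetilde X}:\widetilde X\tto E_1\times E_2\times E_3$ is $\widetilde K$-equivariant, and on the target $\widetilde K$ acts by translations, which are fixed-point free. Hence any $k\in\widetilde K$ fixing a point of $\widetilde X$ acts trivially on the Albanese, so $k=0$. This makes the quotient morphism $q:\widetilde X\tto Y$ étale of degree $|\widetilde K|$. By construction, $K\subset\widetilde K$ is the deck group of $\widetilde X\tto X$, so $q$ factors as $\widetilde X\tto X\tto Y$, and the residual quotient $X\tto Y$ is étale of degree $|\widetilde K/K|$.

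Next, the $\widetilde K$-equivariant surjection $a_{\widetilde X}$ descends to a generically finite morphism $a_Y:Y\tto(E_1\times E_2\times E_3)/\widetilde K=F_1\times F_2\times F_3$. This is the Albanese of $Y$ by universality: any morphism $Y\tto B$ lifts to $\widetilde X$, factors through $a_{\widetilde X}$, and descends by equivariance of the resulting map $E_1\times E_2\times E_3\tto B$. In particular $Y$ has maximal Albanese dimension, and the étale quotient $Y$ of the general-type variety $X$ is itself of general type.

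The delicate step is $P_1(Y)=1$. Since $q$ is étale, $\omega_{\widetilde X}=q^*\omega_Y$, so $H^0(Y,\omega_Y)=H^0(\widetilde X,\omega_{\widetilde X})^{\widetilde K}$; equivalently, by flat base change along the isogeny $p:E_1\times E_2\times E_3\tto F_1\times F_2\times F_3$, we have $p^*a_{Y*}\omega_Y\cong a_{\widetilde X*}\omega_{\widetilde X}$, which by Example \ref{eel} splits as $\cO\oplus(L_1\otimes L_2)\oplus(L_2\otimes L_3)\oplus(L_3\otimes L_1)$. The $\widetilde K$-action commutes with the action of the Galois group $G=(\Z/2\Z)^2$ of $\widetilde X\tto E_1\times E_2\times E_3$, hence preserves each $G$-isotypic summand; each summand therefore descends to a line bundle on $F_1\times F_2\times F_3$, giving $a_{Y*}\omega_Y\cong\cO\oplus M_{12}\oplus M_{23}\oplus M_{31}$ with $p^*M_{ij}\cong L_i\otimes L_j$. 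The main obstacle is identifying the $\widetilde K$-linearization on each $L_i\otimes L_j$ inherited from the action on $\widetilde X$ and showing that it differs from the canonical pullback linearization by a character of $\widetilde K$ that forces $h^0(M_{ij})=0$. I would track this via the presentation $C_\alpha=D'_\alpha\times_{F_\alpha}E_\alpha$ and the 2-torsion points $\xi_\alpha\in\PF_\alpha$ singled out earlier in the proof of Theorem \ref{3cur} by the relation $\xi_\alpha\in\Ker(t_\alpha^*)\cap\Ker(\lambda_\alpha^*)$. Parallel to Example \ref{chh}, the expected outcome is a Chen-Hacon-type formula for $a_{Y*}\omega_Y$ exhibiting a non-trivial 2-torsion twist in at least one factor where the bundle is otherwise trivial, forcing $h^0(M_{ij})=0$ for each pair $\{i,j\}$, and hence $P_1(Y)=h^0(\cO)=1$.
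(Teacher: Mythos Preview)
Your arguments for the \'etaleness of $\widetilde X\tto Y$, the factorization through $X$, the identification of $\Alb(Y)$, maximal Albanese dimension, and general type are all correct and essentially coincide with the paper's (which packages the freeness argument as a cartesian square sitting over the \'etale isogeny $E_1\times E_2\times E_3\tto F_1\times F_2\times F_3$).

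Where you diverge is the ``delicate step'' $P_1(Y)=1$, and here you have overlooked the one-line argument the paper actually uses. Recall the ambient hypothesis of Theorem~\ref{th63}: $P_1(X)=1$. Since the map $X\tto Y$ you just produced is \'etale, $\omega_X$ is the pullback of $\omega_Y$, so $H^0(Y,\omega_Y)$ injects into $H^0(X,\omega_X)$ and $P_1(Y)\le P_1(X)=1$; the reverse inequality is immediate from maximal Albanese dimension (the trace splitting gives $\cO\subset a_{Y*}\omega_Y$). That is the entire proof of this clause.

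Your proposed route via $\widetilde K$-linearizations on the summands $L_i\otimes L_j$ and the identification of the descended bundles $M_{ij}$ is not wrong in spirit, but it is (a) incomplete---you yourself flag the linearization computation as an unresolved ``main obstacle''---and (b) really the content of what the paper does \emph{after} this lemma, namely the explicit determination of $a_{Y*}\omega_Y$ via Pardini's building data and the conclusion that $Y$ is a Chen--Hacon threefold. For the lemma itself none of that machinery is needed: you are trying to prove $P_1(Y)=1$ by first proving the much stronger structural statement that logically comes next.
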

 
 \begin{proof}We have a   cartesian diagram
\begin{eqnarray}\label{d1}
\xymatrix@C=50pt
{
 \widetilde{X}\ar@{}[dr]|\square\ar@{->>}[d]^{/\widetilde K}\ar@{->>}[r]^-{a_{\widetilde X}}   & E_1\times E_2\times E_3\ar@{->>}[d]^{/\widetilde K}_\lambda\\
 Y\ar@{->>}[r]^-{a_Y} & F_1\times F_2\times F_3.
 }
 \end{eqnarray}
  Since the rightmost quotient morphism is  \'{e}tale, so is  the leftmost quotient morphism. Since the top morphism is the Albanese mapping of $\widetilde X$, the bottom morphism  is the Albanese mapping of $Y$.
 Furthermore, since  $K$ is a subgroup of $\widetilde K$, the leftmost quotient morphism factors as $\widetilde{X}\tto X\tto Y$. Therefore, $P_1(Y)=1$.
 \end{proof}

We now claim that $Y$ is a Chen-Hacon threefold.  By Theorem \ref{3cur} (or the diagram (\ref{d1})),   $a_Y$ is a $(\Z/2\Z)^2$-Galois covering and by \cite{P}, we can write
$$a_{Y*}\cO_Y=\bigoplus_{\chi\in ( \Z/2\Z)^{2*}}L_{\chi}^\vee=\cO_{\Alb(Y)}\oplus L_{\chi_1}^\vee\oplus L_{\chi_2}^\vee\oplus L_{\chi_3}^\vee,$$
or equivalently, since $Y$ has rational singularities, 
\begin{equation}\label{e15}
a_{Y*}\omega_Y=\cO_{F_1\times F_2\times F_3}\oplus L_{\chi_1}\oplus L_{\chi_2}\oplus L_{\chi_3},
\end{equation}
 where $L_{\chi_1}$, $ L_{\chi_2}$, and $ L_{\chi_3}$ are line bundles on 
$F_1\times F_2\times F_3$. Moreover, by  \cite{P}, Theorem 2.1, we have the following ``building data'': there are effective divisors $D_1$, $D_2$, and $D_3$ on $F_1\times F_2\times F_3$
satisfying:
$$L_{\chi_i}+L_{\chi_j}\siml L_{\chi_k}+D_k\quad{\text{and}}\quad L_{\chi_i}^2\siml D_j+D_k$$
for any $\{i, j, k\}=\{1, 2, 3\}$. These data pull back to the analogous building data on $\widetilde X$, hence
$\lambda^*D_i$ is the pull-back on $E_1\times E_2\times E_3$ of the branch divisor $\Delta_i\siml 2\delta_i$ of $\rho_i$. It follows that there exists an ample line bundle $\delta'_i$ on $F_i$ which pulls back to $\delta_i$ on $E_i$ and such that $D_i$ is also the pull-back on $F_1\times F_2\times F_3$ of a divisor  $\Delta'_i\siml 2\delta'_i$ on $F_i$. Let $L'_i$ be the pull-back  on $F_1\times F_2\times F_3$ of $\delta'_i$. Because of the relations $L_{\chi_i}^2\siml D_j+D_k$, we can write
$$L_{\chi_i}\isom P_{\xi_i}\otimes (L'_j\otimes P_{\xi_{i,j}})\otimes (L'_k\otimes P_{\xi_{i,k}}),$$
where $\xi_i\in \PE_i$,  $\xi_{i,j}\in \PE_j$, and $\xi_{i,k}\in \PE_k$ are 2-torsion points. From
 (\ref{e15}) and the fact that $P_1(Y)=1$, we deduce $H^0(E_i,L_{\chi_i})=0$, hence 
each $\xi_i$ has order 2 and is in the kernel of $\widehat \lambda_i$. From the relations $L_{\chi_i}+L_{\chi_j}\siml L_{\chi_k}+D_k$, we deduce 
$$\xi_{i,k}+\xi_{j,k}=\xi_k\quad{\text{and}}\quad \xi_{i,j}+\xi_j=\xi_{k,j}.$$
 Since $\lambda_1^*\xi_1=0$, we may always change $L'_1$ to $L'_1\otimes P_{\xi_1}$, so we may assume $\xi_{3,1}=0$ and similarly, $\xi_{1,2}=0$ and $\xi_{2,3}=0$.
The $\cO_{F_1\times F_2\times F_3}$ algebra $a_{Y*}\cO_Y$ is then    the algebra associated to a Chen-Hacon threefold (see (\ref{e5})).
 We conclude that $Y$ is a Chen-Hacon threefold.
\end{proof}

\section{A conjecture}\label{s7}

As mentioned in the introduction, we end this article with a conjecture on the possible general structure of
 smooth projective  varieties $X$ of maximal Albanese dimension, of general type, with $\chi(X, \omega_X)=0$. 

 \begin{conj*}
 Let $X$ be a smooth projective  variety of maximal Albanese dimension, of general type, with $\chi(X, \omega_X)=0$. Then there exist a smooth projective variety $X'$, a morphism $X'\tto X$ which is a composition of modifications and abelian  \'etale covers, and  a   fibration $g:X'\tto Y$ with general fiber $F$, such that $0<\dim(Y)<\dim(X)$ and
 \begin{itemize}
 \item [{\rm a)}] either $g$ is isotrivial;
 \item [{\rm b)}] or $\chi(F, \omega_F)= 0$.
 \end{itemize}
 \end{conj*}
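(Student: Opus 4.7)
The plan is to combine the structure theorem (Theorem \ref{q1}) with an isotriviality argument modeled on Step 1 of the proof of Theorem \ref{3cur}. First I would apply Theorem \ref{q1} to $a_X\colon X\to \Alb(X)$: by \ref{215}, the vanishing $\chi(X,\omega_X)=0$ together with general type forces $V_i(\omega_X,a_X)$ to have a component of codimension $i$ for some $i\in\{1,\dots,n-1\}$, and Remark \ref{remx} legitimizes the application of the theorem. This produces, after an abelian \'etale cover and a modification $X'\to X$, a fibration $g\colon X'\tto Y$ where $Y$ is smooth of dimension $n-i$, of general type, with $\chi(Y,\omega_Y)>0$, and whose general fiber $F$ has dimension $i>0$. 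In particular $0<\dim Y<\dim X'$. If $\chi(F,\omega_F)=0$, conclusion (b) holds and the proof ends.

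The substantive case is $\chi(F,\omega_F)>0$, where the goal is to prove that $g$ is isotrivial. The strategy is to imitate Step 1 of the proof of Theorem \ref{3cur}. Given a smooth curve $C\subset Y$ through a general point, I would perform semistable reduction of $g$ over a finite base change of $C$, then apply the Viehweg inclusion (\cite{V}, Lemma 3.1) twisted by pullbacks $P_{\xi}$ for $\xi$ ranging over another component of $V_0(\omega_{X'},a_{X'})$ that is transverse to the $Y$-direction, combined with the nefness of the Hodge-theoretic direct images (\cite{V}, Corollary 3.6). A rank count in the spirit of Proposition \ref{l46}, together with $\chi(X',\omega_{X'})=0$, should force the relevant direct image bundle on $C$ to have degree zero; Koll\'ar vanishing (\cite{ko1}, Proposition 7.6) and Grothendieck--Riemann--Roch then give $\deg(g_*\omega_{X'/C})=0$, hence local triviality of $g$ along $C$. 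Letting $C$ sweep out $Y$ would promote this to isotriviality of $g$.

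The main obstacle is producing the transverse codimension-$i$ component of $V_0(\omega_{X'},a_{X'})$ needed to twist the Viehweg inclusion. In dimension $3$, Corollary \ref{two}.b) guarantees three simple factors in $\Alb(X')$ and the three resulting mutually transverse projections are exactly what powers Step 1 of the proof of Theorem \ref{3cur}; in arbitrary dimension this is not automatic. If $V_0(\omega_{X'},a_{X'})$ has only a single codimension-$i$ component, the degree-zero argument breaks down, and one should instead recurse, applying Proposition \ref{r5} to $Y$ to extract a further non-trivial fibration and iterating until either isotriviality appears or a fiber with vanishing Euler characteristic emerges. Controlling this recursion, and ruling out genuinely ``twisted'' intermediate fibrations for which neither alternative of the conjecture holds, is where I would expect the serious difficulty to lie; the open questions stated at the end of \S\ref{3sc} indicate that even in the three-simple-factor case, isotriviality beyond dimension $3$ is currently out of reach of these techniques, so any complete proof will likely need a genuinely new ingredient beyond the generic vanishing and Viehweg-positivity toolkit used here.
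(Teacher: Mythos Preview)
The statement you are attempting to prove is presented in the paper as an open \emph{Conjecture}, not as a theorem; the paper offers no proof. What the paper does provide is (i) the three-dimensional case, via Theorem \ref{3cur}, which shows that in dimension $3$ one always lands in case (a) with an explicit Ein--Lazarsfeld structure, and (ii) supporting evidence in the form of Remarks \ref{remm} and Example \ref{eell}. So there is no ``paper's own proof'' to compare your proposal against.

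Your sketch is a reasonable outline of how one might try to extend the three-dimensional argument, and you correctly identify the essential obstruction: the isotriviality step in the proof of Theorem \ref{3cur} (Step 1) relies on having enough transverse components of $V_0$ to force degree zero of the relevant direct images, and this is precisely what Proposition \ref{l35} supplies when $A$ has exactly three simple factors. In higher dimension no such supply is guaranteed; the Questions at the end of \S\ref{3sc} make explicit that even with three simple factors the isotriviality of the relevant fibrations is unknown beyond dimension $3$. Your proposed recursion via Proposition \ref{r5} does not obviously terminate in one of the two desired alternatives, and nothing in the paper's toolkit closes that gap. In short, your assessment in the final paragraph is accurate: this is an open problem, and the paper does not claim otherwise.
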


\begin{remas}\label{remm}
1)  Conversely, in the situation   b) above,  $\chi(X, \omega_X)= 0$ (\cite{hp},  Proposition 2.5).
 Moreover,  $\Alb(X)$ has at least 4 simple factors by Corollary \ref{two}.b) and Proposition \ref{surj}.b).
Of course, in case a), without further constraints, one might have $\chi(X, \omega_X)>0$, but we were unable to find necessary and sufficient conditions on the  isotorivial fibration $g$ (assuming $X$ does not fall into case b)) to ensure $\chi(X, \omega_X)=0$.

3) If we are {\em not} in case b), 
it follows from Lemma \ref{le46} that if $X'\tto X$ is any composition of modifications and abelian \'etale covers, we have $q( X')=\dim(X)$ and any morphism from $X'$ to a curve of genus $\ge 2$ is constant.
\end{remas}

The Ein-Lazarsfeld example  (Example \ref{eel}) falls into case a) of the conjecture, and not into case b) by Remark \ref{remm}.1) above. We present an example that falls  into case b), but not into case a). It is basically a non-isotrivial fibration whose general fibers are Ein-Lazarsfeld threefolds.

 \begin{exam}\label{eell}  Consider a smooth projective curve $C$ of genus $\geq 2$, elliptic curves $E_1$, $E_2$, and $E_3$, and smooth   double coverings $ S_j\tto C\times E_j$ ramified along ample divisors. Denote by $\iota_j$ the corresponding involution of $S_j$.
  We may moreover assume that the fibrations $f_j: S_j\tto C$ are all semistable and   not isotrivial.

The fourfold $T:=S_1\times_C S_2\times_C S_3$ has only rational Gorenstein singularities, and so does its quotient $Z$   by the involution $\iota_1\times\iota_2\times\iota_3$. Let $\eps:X\tto Z$ be a desingularization. We have a diagram
$$
\xymatrix@C=15pt
{&&& T\ar@{->>}[dlll]\ar@{->>}[dll]\ar@{->>}[dl]\ar@{->>}[d]_g^{2:1}\\
S_1\ar@{->>}[d]^{2:1}&S_2\ar@{->>}[d]^{2:1}&S_3\ar@{->>}[d]^{2:1} &Z\ar@{->>}[d]_-f^{4:1}&X\ar@{->>}[l]_\eps\\
C\times E_1\ar@{->>}[drrr]&C\times E_2\ar@{->>}[drr]&C\times E_3\ar@{->>}[dr]&C\times E_1\times E_2\times E_3\ar@{->>}[d] \\ 
&&&C.}
$$
The variety $X$ is of general type and    has maximal Albanese dimension because $C\times E_1\times E_2\times E_3$ does. A general fiber of the fibration $ X\tto C$ is one of the examples constructed in Example \ref{eel}, hence  $\chi(X, \omega_X)= 0$ by \cite{hp},  Proposition 2.5, and $X$ falls into case b) of the conjecture. One can prove that  it does not fall into case a).

\end{exam}

\end{document}